\documentclass[final,3p,times]{elsarticle}

\usepackage{amssymb,amsmath,amsthm}
\usepackage{graphicx}
\usepackage{algorithm}
\usepackage{algpseudocode}

\usepackage{subcaption}
\usepackage{hyperref}
\usepackage{booktabs}

\newcommand{\eps}{\varepsilon}
\newcommand{\thetarec}{\theta_{\mathrm{rec}}}
\newcommand{\thetaadv}{\theta_{\mathrm{adv}}}

\newtheorem{proposition}{Proposition}
\newtheorem{theorem}{Theorem}
\newtheorem{lemma}{Lemma}
\newtheorem{remark}{Remark}

\usepackage{mathptmx}
\usepackage{pgfplots}
\pgfplotsset{compat=1.18}

\usepackage{tikz}
\usetikzlibrary{angles,calc}
\usetikzlibrary{decorations.pathmorphing}

\def\Xint#1{\mathchoice
{\XXint\displaystyle\textstyle{#1}}%
{\XXint\textstyle\scriptstyle{#1}}%
{\XXint\scriptstyle\scriptscriptstyle{#1}}%
{\XXint\scriptscriptstyle\scriptscriptstyle{#1}}%
\!\int}
\def\XXint#1#2#3{{\setbox0=\hbox{$#1{#2#3}{\int}$ }
\vcenter{\hbox{$#2#3$ }}\kern-.6\wd0}}
\def\dashint{\Xint-}

\makeatletter
\def\ps@pprintTitle{%
  \let\@oddhead\@empty
  \let\@evenhead\@empty
  \def\@oddfoot{}%
  \let\@evenfoot\@oddfoot}
\makeatother

\begin{document}

\begin{frontmatter}

\title{\texorpdfstring
  {A domain decomposition method for the directional contact \\ angle hysteresis interval on doubly periodic rough surfaces}
  {A domain decomposition method for the directional contact angle hysteresis interval on doubly periodic rough surfaces}}
\author[UU]{Zijie Lin}
\ead{zjlin@math.utah.edu}
\author[UU]{William M Feldman\corref{cor1}}
\ead{feldman@math.utah.edu}
\author[UU]{Braxton Osting}
\ead{osting@math.utah.edu}
\cortext[cor1]{Corresponding author.}

\affiliation[UU]{%
organization={Department of Mathematics, University of Utah},
addressline={155 S. 1400 E., Rm 233},
city={Salt Lake City},
postcode={84112},
state={UT},
country={USA}}

\begin{abstract}
We study wetting on a doubly periodic rough surface in three dimensions. Although the liquid–vapor interface meets the solid at the local Young's angle, microscale roughness can cause the macroscopic apparent angle to differ substantially from this value. We formulate the directional contact angle hysteresis (CAH) interval in terms of apparent angles associated with pinned microscopic configurations. To approximate its receding and advancing endpoints, we evolve capillary mean curvature flow (CMCF) toward extremal stationary states. Computing these states is difficult because the pinning that produces hysteresis is generated at the scale of the roughness, whereas the apparent angle is only meaningful at the macroscopic scale, so a single uniform grid must resolve both. We therefore introduce a two-scale alternating (TSA) method based on a Schwarz decomposition: a Merriman--Bence--Osher (MBO) diffusion-generated scheme resolves the contact-line near region, while a linearized minimal-surface problem updates the far region. For an idealized reference iteration, we prove decay of an approximate interfacial energy. Numerical experiments on a representative doubly periodic surface show a strongly anisotropic CAH interval whose width varies by more than a factor of three with contact-line orientation and changes sharply near the diagonal directions. Because a stationary droplet must meet the solid at an apparent angle inside this interval, the computed anisotropy constrains which macroscopic wetted regions the surface can support; it is consistent with a square-like stationary droplet whose sides align with the diagonal directions.
\end{abstract}

\begin{keyword}
contact angle hysteresis \sep
capillarity \sep
threshold dynamics \sep
MBO diffusion-generated motion \sep
Schwarz alternating method \sep
domain decomposition \sep
free boundary problem

% JCP does not use MSC codes; kept here in case they are wanted elsewhere.
% \MSC[2020]
% 35R35 \sep % Free boundary problems for PDEs
% 53E10 \sep % Flows related to mean curvature
% 65N55 \sep % Multigrid methods; domain decomposition for BVPs
% 76D45 \sep % Capillarity (surface tension)
% 49Q05     % Minimal surfaces and optimization
\end{keyword}

\end{frontmatter}

\section{Introduction} \label{sec:Intro}
When a liquid drop rests on a solid surface, the angle between the liquid interface and the solid is called the (static) \emph{contact angle}.
For an ideal flat solid surface, a stationary droplet meets the solid surface at Young's angle \cite{Young_1805}. This is the contact angle condition satisfied by the globally energy-minimizing configuration.
However, real solids can be rough and/or have heterogeneous chemical patterns, which affect wettability.
On such surfaces, the contact line may become pinned, allowing static configurations to exhibit a range of contact angles.
This phenomenon is known as \emph{contact angle hysteresis} (CAH) and is characterized by an interval of allowed apparent contact angles \([\,\thetarec,\thetaadv\,]\), whose endpoints are the receding and advancing contact angles, respectively.
As a fundamental property, CAH plays a key role in various wetting phenomena.
For example, it governs imbibition of droplets on microdecorated surfaces \cite{Bico_2001, Chu_2010, Courbin_2007}.
Control of wetting on such surfaces is in turn critical for applications such as
the design of ``self-cleaning'' surfaces \cite{barthlott1997purity},
inkjet printing \cite{calvert2001inkjet}, and
bio-microarrays \cite{Yoshino_2006}.
Furthermore, CAH affects droplet evaporation modes and is used to quantify how surface micropatterns influence evaporation dynamics \cite{Kashaninejad_2020}.

Physicists and engineers are interested in controlling droplet shapes via construction of microstructured and chemically patterned surfaces \cite{Raj_2014}.
Various polygonal droplet shapes have been realized experimentally.
The occurrence of controlled anisotropic shapes of macroscopic droplets on micropatterned surfaces has been linked with anisotropy in the CAH interval \cite{kusumaatmaja2008anisotropic}.
In other words, the hysteresis interval can depend on the macroscopic orientation of the contact line relative to the underlying micropattern.

Due to its practical importance, CAH has been the subject of extensive theoretical study.
Joanny and de Gennes introduced an early theoretical model for CAH based on localized surface heterogeneities \cite{joanny1984model}.
In this model, sufficiently strong surface heterogeneities induce pinning of the contact line and give rise to CAH.
This framework has since been extended to investigate contact-line dynamics in a variety of settings, providing further insight into the mechanisms associated with CAH \cite{joanny1990motion,raphael1989dynamics}.
From an engineering perspective, thermodynamic approaches have also been used to model contact-line distortion and thereby predict contact angle hysteresis on heterogeneous and superhydrophobic surfaces \cite{raj2012unified}.

In the last two decades, both globally stable states and CAH for liquid drops on rough surfaces have been studied in the mathematical literature, and several of these effects have been established rigorously.
Homogenization of the wetting energy on rough and chemically heterogeneous surfaces yields effective macroscopic energy densities and explains the emergence of Wenzel- and Cassie-type laws \cite{Alberti_2005,CaffarelliMellet,mellet2012capillary,Feldman_2018,Xu_2010}.
Several energetic and dissipation-based models have been proposed to explain CAH and its associated stick–slip behavior \cite{desimone2007new,Caffarelli_2007}.
In partially linearized models where the free surface is graphical, the set of pinned angles and its anisotropy have been studied in detail in \cite{feldman2021limit}.
 Moreover, jump discontinuities of this interval as a function of the contact-line direction have been linked to faceting of the wetted set \cite{feldman2019free}, which is the mechanism underlying the anisotropic droplet shapes discussed above.
However, a general and rigorous mathematical characterization of CAH remains a challenge.

CAH has also been studied numerically, but resolving it on a rough surface places competing demands on the discretization.
In a numerical study using a phase-field model, stick–slip behavior and CAH were observed in three-dimensional droplets spreading on physically flat, chemically patterned surfaces \cite{Zhong_2016}.
From a computational perspective, the CAH interval can be estimated by identifying the numerical solutions corresponding to the receding and advancing configurations.
Under a volume constraint, these configurations are expected to be surfaces of constant mean curvature meeting the solid at Young's angle along the contact line.
After rescaling to the microscale near the contact line, the pressure term in the curvature equation drops out and the interface becomes a minimal surface, see the discussion below in Section~\ref{s:TheoHystInt}.
Thus, a natural mathematical way to find stationary solutions is the energy-decreasing flow of the capillary functional, where the interface follows the mean curvature flow away from the contact line while the Young's angle condition is enforced at the contact line.

An efficient computational method for simulating mean curvature flow is the Merriman--Bence--Osher (MBO) diffusion-generated method, which iteratively performs diffusion and thresholding steps \cite{MBO1993,Merriman_1994}.
An energetic formulation of this method, which generalizes well to multiphase problems, was developed in \cite{Esedoglu_2014}.
A variation of this scheme was developed by Xu et al. for the capillary problem, where the equilibrium interface satisfies the Young's angle condition near the contact line \cite{Xu_2017}, and was subsequently improved \cite{Wang_2019}.
These methods are easy to implement and computationally efficient, especially when the fast Fourier transform (FFT) can be used in the diffusion step.
However, they are formulated on a single uniform grid, and that poses a significant challenge for the problem at hand.
The pinning that produces hysteresis is generated at the scale $O(\eps)$ of the roughness, while the apparent contact angle that defines the interval is only meaningful at the $O(1)$ scale of the far field, a separation of scales that also appears in reduced models of dynamic CAH on rough surfaces \cite{Xiao_2023}. 
A uniform discretization fine enough to resolve the contact line must therefore be carried across the entire domain and quickly becomes prohibitive as $\eps \to 0$.
Related work on dynamic CAH has used multiscale expansions and averaging to derive effective boundary conditions on two-dimensional chemically inhomogeneous surfaces \cite{Zhang_2022}.
Extending such descriptions to general three-dimensional problems with rough or chemically inhomogeneous surfaces remains challenging.
Thus, a robust computational scheme for determining the CAH interval is still lacking.
Resolving both scales at once is the essential computational difficulty, and it is what motivates the domain decomposition developed in this work.

This paper makes three contributions.
First, we formulate the directional CAH interval for a doubly periodic rough surface and characterize its endpoints as the apparent angles of extremal stationary states of a capillary mean curvature flow.
Second, we introduce the two-scale alternating (TSA) method, a Schwarz decomposition that couples an MBO diffusion-generated scheme near the contact line to a linearized minimal surface in the far region, and we prove an energy-decay property for a reference form of the iteration.
Third, we compute the directional CAH interval for a doubly periodic rough surface in three dimensions and find a pronounced anisotropy, including a sharp change in the interval at certain lattice-aligned orientations.

We next formulate the capillary model and define the CAH interval considered in this work.

\subsection{Surface energy and capillary problems}
Consider a wetting problem in a domain $\Omega \subset \mathbb R^n$, $n=2,3$.
We denote the
liquid domain by $L$,
vapor domain by $V$, and
solid domain by $S$
to obtain the disjoint partition
$\Omega= L\cup V\cup S$.
Accordingly, the liquid–vapor, solid–liquid, and solid–vapor interfaces are denoted
$\Sigma_{LV} :=\partial L\cap \partial V$,
$\Sigma_{SL} :=\partial S \cap \partial L$, and
$\Sigma_{SV} := \partial S \cap \partial V$,
respectively.
The contact line is written as
$\Gamma_{SLV} := \partial S \cap \partial L \cap \partial V$.
Let $\gamma_{SL}(\mathbf{x})$,
$\gamma_{SV}(\mathbf{x})$, and
$\gamma_{LV}$
denote the solid–liquid, solid–vapor, and liquid–vapor energy densities.
The equilibrium configurations of the system are the critical points of the total interfacial energy,
\begin{equation}
\label{equ:interface_energy}
\mathcal{E}(L,V; \Omega)
 \, = \,
 \gamma_{LV} |\Sigma_{LV}|
 \ + \
 \int_{\Sigma_{SL}} \gamma_{SL}(\mathbf{x} ) \,d\mathbf{x}
 \ + \
 \int_{\Sigma_{SV}} \gamma_{SV}(\mathbf{x} )
 \, \,d\mathbf{x},
\end{equation}
subject to volume constraints and/or other side conditions.
Volume constraints on the liquid or vapor region are physically natural, and Dirichlet constraints on the domain boundary $\partial \Omega$ are often mathematically useful.

\begin{figure}[t]
\centering
\hspace{-10mm}
\begin{tikzpicture}[baseline]
%=== 1. O(1) scale droplet ===%
\begin{scope}[xshift=0cm, scale=0.8]
\draw[thick] (-3,0) -- (3,0);

% fill the droplet: circle center (0,1), radius 2, clipped to y>=0
\begin{scope}
\clip (-3,0) rectangle (3,3);          % keep only y>=0
\fill[blue!20] (0,1) circle[radius=2]; % fill circle
\draw[blue, thick] (0,1) circle[radius=2]; % outline
\end{scope}

\coordinate (C) at ({sqrt(3)},0);
\coordinate (H) at ($(C)+(-1,0)$);
\coordinate (T) at ($(C)+(1, {sqrt(3)})$);

% angle marker
\draw pic[draw=red,
line width=1.0pt, 
angle radius=10pt,
angle eccentricity=1.2]
{angle = T--C--H};
\node at (1.2, 0.7) {$\theta_{\rm app}$};
\node[below] at (0,-0.8) {$O(1)$ scale};
\draw[->,very thick] (3^0.5,0) -- (3^0.5+1/2,3^0.5/2);
\draw[->,very thick] (3^0.5,0) -- (3^0.5-1,0);
\node[right] at (3^0.5+1/2,3^0.5/2) {$\bar{\mathbf{n}}_{LV}$};
\node[below] at (3^0.5-1,0) {$\bar{\mathbf{n}}_{SL}$};
\draw[-, dashed] (3^0.5,0) -- (3^0.5+1,3^0.5);

\node[above] at (-1.1,0) {L};
\node[below] at (-1.1,0) {S};
\node[above] at (-2.5,0) {V};

\end{scope}

%=== 2. O(ε) scale straight interface + roughness ===%
\hspace{5mm} 
\begin{scope}[xshift=5cm, scale=0.6]
% Fill region: above substrate and left of y = 2x
\begin{scope}
\clip (-4, -4) rectangle (2, 4); % bounding box
\clip (-4, 0) -- (0,0) -- (2,4) -- (-4,4) -- cycle; % left of the interface
\fill[blue!20]
plot[domain=-pi:1.5*pi, samples=300] (\x, {-0.5*sin(deg(\x))}) -- (1.5*pi, 4) -- (-pi, 4) -- cycle;
\end{scope}

% Substrate: y = -sin(x)
\draw[thick, domain={-pi}:{pi}, samples=300]
plot (\x, {-0.5*sin(deg(\x))});

% Interface: y = 2x
\draw[blue, thick] (0,0) -- (2,4);

% Contact angle
\coordinate (O) at (0,0);
\coordinate (H) at (-1,0);
\coordinate (T) at (1,2);
\coordinate (C) at (-2,1);
\draw pic[draw=red,
line width=1.0pt, 
angle radius=40pt,
angle eccentricity=1.2]
{angle = T--O--H};
\node at (-1.5, 2.5) {$\theta_{\rm app}$};

\draw pic[draw=black,
line width=1.0pt, 
angle radius=10pt,
angle eccentricity=1.2]
{angle = T--O--C};
\node at (0.0, 1.0) {$\theta_{Y}$};
% x-axis
\draw[->, dashed] (-2,0) -- (2,0) node[right] {\(x\)};
\node[below] at (0,-0.8) {$O(\eps)$ scale};
\draw[->,very thick] (2,4) -- (2.5,5);
\filldraw (2,4) circle (2pt);
\node[right] at (2.5,5) {$\bar{\mathbf{n}}_{LV}$};
\draw[->,very thick] (0,0) -- (-3,0);
\node[below] at (-3,0) {$\bar{\mathbf{n}}_{SL}$};
\draw[->,very thick] (0,0) -- (-1,0.5);
\node[above] at (-1,0.5) {${\mathbf{n}}_{SL}$};
\draw[->,very thick] (0,0) -- (0.75,1.5);
\node[right] at (0.75,1.5) {${\mathbf{n}}_{LV}$};
\end{scope}

%=== 3. o(ε) scale Young's angle ===%
\hspace{5mm} 
\begin{scope}[xshift=10cm, scale=0.6]
\begin{scope}
\fill[blue!20]
(-4,2) -- (0,0) -- (2,4)-- (-4,4) -- cycle; % area above y = -x
\end{scope}

% Solid plane: y = -x
\draw[thick] (-4,2) -- (1,-1/2);

% Interface: y = 2x
\draw[blue, thick] (0,0) -- (2,4);

% Contact angle θ_Y
\coordinate (O) at (0,0);
\coordinate (S) at (-2,1);  % along y = -1/2*x
\coordinate (I) at (1,2);   % along y = 2x
\draw pic[draw=black,
line width=1.0pt,   
angle radius=20pt,
angle eccentricity=1.2]
{angle = I--O--S};

\node at (0.0, 1.5) {$\theta_Y$};

\node[below] at (0,-0.8) {$o(\eps)$ scale};
\draw[->,very thick] (0,0) -- (-2,1);
\node[above] at (-2,1) {${\mathbf{n}}_{SL}$};
\draw[->,very thick] (0,0) -- (1,2);
\node[right] at (1,2) {${\mathbf{n}}_{LV}$};
\end{scope}
\end{tikzpicture}
\vspace{-12mm}
\caption{{\bf (left)} At $O(1)$ scale, a droplet on a rough surface has constant curvature and makes an apparent angle, $\theta_{\textrm{app}}$, with the horizontal plane.
{\bf (center)} Zooming in to the $O(\eps)$ scale, we see the roughness of the surface.
Away from the solid region, the droplet has zero curvature and takes the apparent angle $\theta_{\textrm{app}}$ with the horizontal plane.
Near the solid region, the droplet makes Young's angle $\theta_Y$ with the rough surface.
In this 2D illustration, the normal and macroscopic normal, ${\mathbf{n}}_{LV}$ and $\bar{\mathbf{n}}_{LV}$, happen to be the same but could differ in the 3D case; see Figure~\ref{fig:3D_apparent}.
{\bf (right)} Zooming in to the $o(\eps)$ scale, the contact line is straight and we obtain the classical depiction for Young's angle.
The goal of this paper is to develop a computational method that solves the $O(\eps)$ scale problem.}
\label{fig:Dirichlet_condition_wetting}
\end{figure}

\paragraph{Volume constrained minimizers} A classical calculus of variations computation shows that the Euler–Lagrange equations for minimization of \eqref{equ:interface_energy} under a volume constraint $|L| = \textup{Vol}$ are the free boundary capillary problem,
\begin{equation*}
\begin{cases}
\begin{aligned}
&\kappa_{LV} = p &&\text{on } \Sigma_{LV} \\
&\theta_{SLV}(\mathbf{x}) = \theta_Y(\mathbf{x}) && \mathbf{x} \in \Gamma_{SLV}.
\end{aligned}
\end{cases}
\end{equation*}
Here, $\kappa_{LV}$ is the mean curvature of the surface $\Sigma_{LV}$ (where $\kappa_{LV} < 0$ when $L$ is convex) and
$p$ is the pressure, which is, mathematically speaking, a Lagrange multiplier for the volume constraint.
The second equation is referred to as the \emph{Young's angle condition}.
Here, $\theta_{SLV}(\mathbf{x}) := \angle(\mathbf{n}_{SL}(\mathbf{x}),\mathbf{n}_{LV}(\mathbf{x}))$
is the angle made between
$\mathbf{n}_{SL}(\mathbf{x})$, the normal vector to $\Gamma_{SLV}$ which is tangent to $\partial S$ at $\mathbf{x}$ and points inward toward the wetted region $\Sigma_{SL}$, and $\mathbf{n}_{LV}(\mathbf{x})$, the normal vector to $\Gamma_{SLV}$ which is tangent to $\Sigma_{LV}$ at $\mathbf{x}$ and points inward to that surface.
Young's angle $\theta_{Y}$, defined by the relation
$\cos(\theta_{Y}(\mathbf{x}))=\frac{\gamma_{SV}(\mathbf{x})-\gamma_{SL}(\mathbf{x})}{\gamma_{LV}}$,
arises as the Euler–Lagrange condition for the minimization of the energy \eqref{equ:interface_energy}.

\paragraph{Volume constrained minimizers with directional hysteresis}
In practice, however, the measured (large-scale) angle may differ from Young's angle due to surface roughness at much smaller scales.
It is therefore important to distinguish between the macroscopic \emph{apparent contact angle} and the microscopic contact angle.
The apparent contact angle $\theta_{\rm app} (\mathbf{x}) := \angle(\bar{\mathbf{n}}_{LV}(\mathbf{x}),\bar{\mathbf{n}}_{SL}(\mathbf{x}))$ is the angle between the macroscopic normals \(\bar{\mathbf{n}}_{LV}\) and $\bar{\mathbf{n}}_{SL}$.
These vectors are the normals to the macroscopic (i.e. roughness-averaged) contact line $\Gamma_{SLV}$,
 tangent to the macroscopic $\partial L$ and $\partial S$ respectively at $\mathbf{x}$; see Figure~\ref{fig:Dirichlet_condition_wetting}~(left).
Due to the roughness, the contact line is pinned at the microscopic scale, where the surface \(\partial S\) cannot be treated as flat, as shown in Figure~\ref{fig:Dirichlet_condition_wetting}~(center).
Consequently, even with the Young's angle condition locally satisfied, the apparent angle \(\theta_{\rm app}\) often deviates from Young's angle; see Figure~\ref{fig:Dirichlet_condition_wetting}~(center).
Indeed, if we further zoom into the \(o(\eps)\) scale, we obtain the classical depiction for Young's angle; see Figure~\ref{fig:Dirichlet_condition_wetting}~(right).

The contact angle hysteresis (CAH) interval is the collection of all apparent contact angles achieved at large scale by stationary microstates.
The CAH interval may depend on the direction of the (apparent) inward normal to the contact line.
In other words, anisotropy naturally emerges at the macroscale.
In what follows, we restrict our attention to \(\mathbb{Z}^2\)-periodic rough surfaces, for which the directional CAH is independent of the macroscopic position \(\mathbf{x}\) and depends only on the direction \(\mathbf{k}\).
In this setting, we formulate a relaxed free boundary problem with anisotropic hysteresis, where the classical contact-angle condition is replaced by the interval constraint,
\begin{equation}\label{equ:vol_minimizer_relaxed}
\begin{cases}
\begin{aligned}
&\kappa_{LV} = p &&\text{on } \Sigma_{LV} \\
&\theta_{\rm app} (\mathbf{x}) \in [\thetarec(\bar{\mathbf{n}}_{SL}(\mathbf{x})),\thetaadv(\bar{\mathbf{n}}_{SL}(\mathbf{x}))] && \mathbf{x} \in\Gamma_{SLV}.
\end{aligned}
\end{cases}
\end{equation}
The angles $\thetarec(\bar{\mathbf{n}}_{SL}(\mathbf{x}))$ and $\thetaadv(\bar{\mathbf{n}}_{SL}(\mathbf{x}))$ are referred to as the receding and advancing angles, respectively.
With a slight abuse of notation, we write the CAH interval as \([\thetarec(\bar{\mathbf{n}}_{SL}(\mathbf{x})),\thetaadv(\bar{\mathbf{n}}_{SL}(\mathbf{x}))]\); a more rigorous definition
is provided in the next section.

\subsection{The contact angle hysteresis (CAH) interval} \label{s:TheoHystInt}
For a rough surface with $O(\eps)$-scale roughness, rescaling near the contact line leads to a near-region problem;
see Figure~\ref{fig:Dirichlet_condition_wetting}~(center).
A leading-order asymptotic analysis in the rescaled coordinate \( \frac{\mathbf{x}}{\eps} \) shows that the liquid–vapor interface curvature \(\kappa_{LV}\) reduces to zero and the Young's angle condition holds at the contact line,
\begin{equation}
\label{prob:infty_channel}
\begin{cases}
\begin{aligned}
& \kappa_{LV} = 0, &&\text{on } \Sigma_{LV}, \\
&\theta_{SLV} (\mathbf{x}) = \theta_Y(\mathbf{x}), && \mathbf{x} \in \Gamma_{SLV}.
\end{aligned}
\end{cases}
\end{equation}
Note that this system is only valid for the region near the contact line. Additional boundary conditions must be imposed to ensure regularity and consistency with the far-region.

\paragraph{Far-field condition minimizers}
 For a $\mathbb{Z}^2$-periodic function $\psi$, we consider a solid domain below a rough surface given by $S:=\{\mathbf{x} = (x,y,z)\in \mathbb{R}^3 \colon z\leq \psi(x,y)\}$.
 Here and throughout, we identify $\mathbf{x} \leftrightarrow (x,y,z)$.
We define the domain
 \[
 \Pi := \{ (x,y,z)\in \mathbb{R}^3 \colon z> \psi(x,y)\}.
 \]
 We assume that the liquid–vapor interface behaves like a flat plane away from the contact line; see \ref{sec:Justification_linearization} for discussion and justification.
 For simplicity, we define the two-dimensional vector \(\mathbf{k}(\mathbf{x}):=-\bar{\mathbf{n}}_{SL}(\mathbf{x})|_{\mathbb{R}^2}\) as the apparent outward unit normal at point \(\mathbf{x}\) along the contact line, pointing from the wetted region toward the non-wetted region in the macroscopic sense.
 Here \(\mathbf{k}(\mathbf{x})\) is a unit vector: \(\bar{\mathbf{n}}_{SL}\) is tangent to the macroscopic solid surface, which for \(S=\{z\leq\psi(x,y)\}\) with \(\psi\) periodic is the plane \(\{z=0\}\), so \(\bar{\mathbf{n}}_{SL}\) is \(xy\)-planar.
 We refer to $\theta$ as a \emph{pinned angle} with respect to the \emph{apparent contact line normal} $\mathbf{k}$ if there exists a solution for \eqref{prob:infty_channel} such that the liquid region \(L\) satisfies
\begin{equation} \label{equ:flat_like}
     \left\{ \mathbf{x} = (x,y,z) \in \Pi \colon  \mathbf{k}  \cdot \left(\begin{smallmatrix} x \\ y \end{smallmatrix}\right)
      + z \cot \theta < -C_0 \right\}
     \,\, \subset \,\, L
     \,\, \subset \,\,
     \left\{ \mathbf{x} = (x,y,z) \in \Pi \colon
      \mathbf{k}  \cdot \left(\begin{smallmatrix} x \\ y \end{smallmatrix}\right)
      + z \cot \theta < C_0 \right \},
\end{equation}
for some constant \(C_0>0\), i.e.,
 \(\Sigma_{LV}\) is sandwiched between two parallel planes.
The contact angle pair $(\mathbf{k},\theta)$ is then called \emph{pinned}.
In the graphical setting of \cite{feldman2021limit}, the set of pinned angles at a given apparent normal \(\mathbf{k}\) is shown to form a closed interval.
In the full capillary setting considered here, where \(\Sigma_{LV}\) need not be graphical, we conjecture that the set of pinned angles
\[
  \Theta_p(\mathbf{k}) \;:=\; \{\theta \colon (\mathbf{k},\theta)\text{ is pinned}\}
\]
also forms a closed interval, which we identify as the CAH interval.
Furthermore, when $\psi$ is smooth, we conjecture that $\Theta_p(\mathbf{k}) \subset (0,\pi)$, i.e. the advancing and receding angles do not degenerate to total wetting or total dewetting.
Accordingly, we define
\(
  \thetarec(\mathbf{k}) \;:=\; \min\Theta_p(\mathbf{k})
  \) and
\(
  \thetaadv(\mathbf{k}) \;:=\; \max\Theta_p(\mathbf{k})
\)
so that $\Theta_p(\mathbf{k}) = \left[\thetarec(\mathbf{k}), \, \thetaadv(\mathbf{k}) \right]$.

\begin{figure}[t!]
\centering
\begin{minipage}[c]{0.40\textwidth}\centering
    \includegraphics[width=\linewidth, trim={146 311 300 282}, clip]{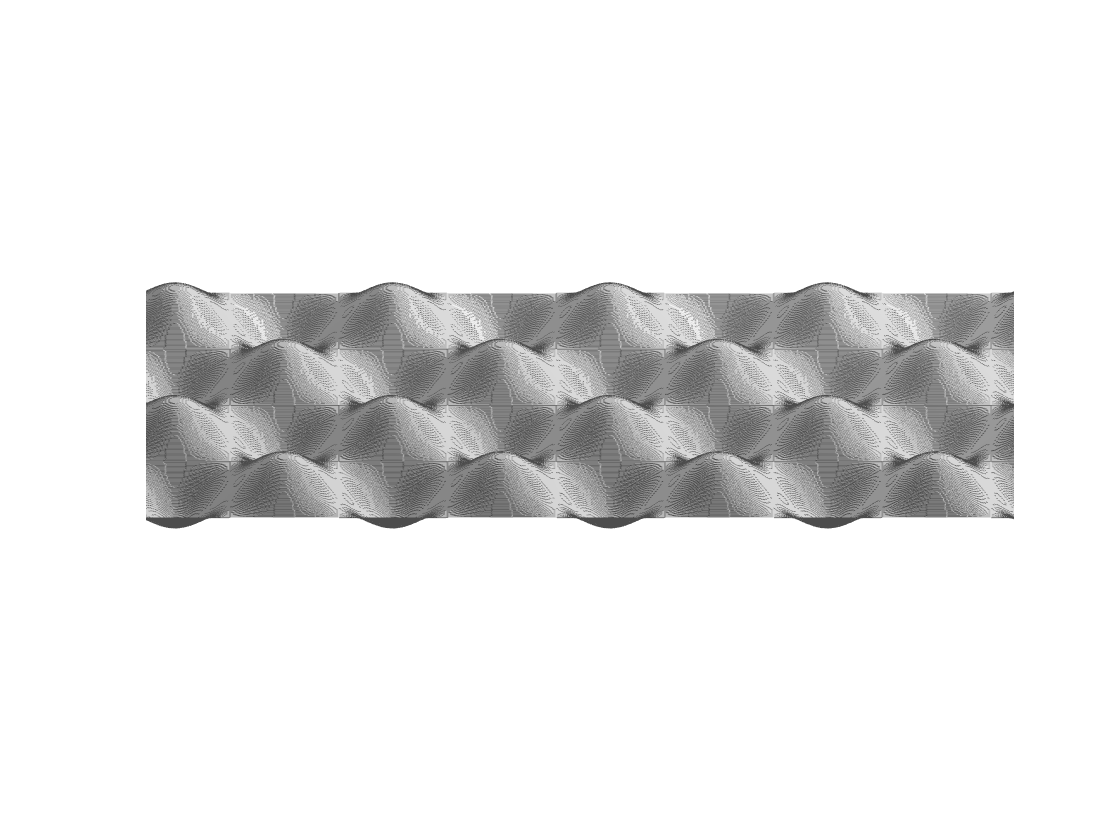}
\end{minipage}
\begin{minipage}[c]{0.40\textwidth}\centering
    \includegraphics[width=\linewidth, trim={85 135 15 80}, clip]{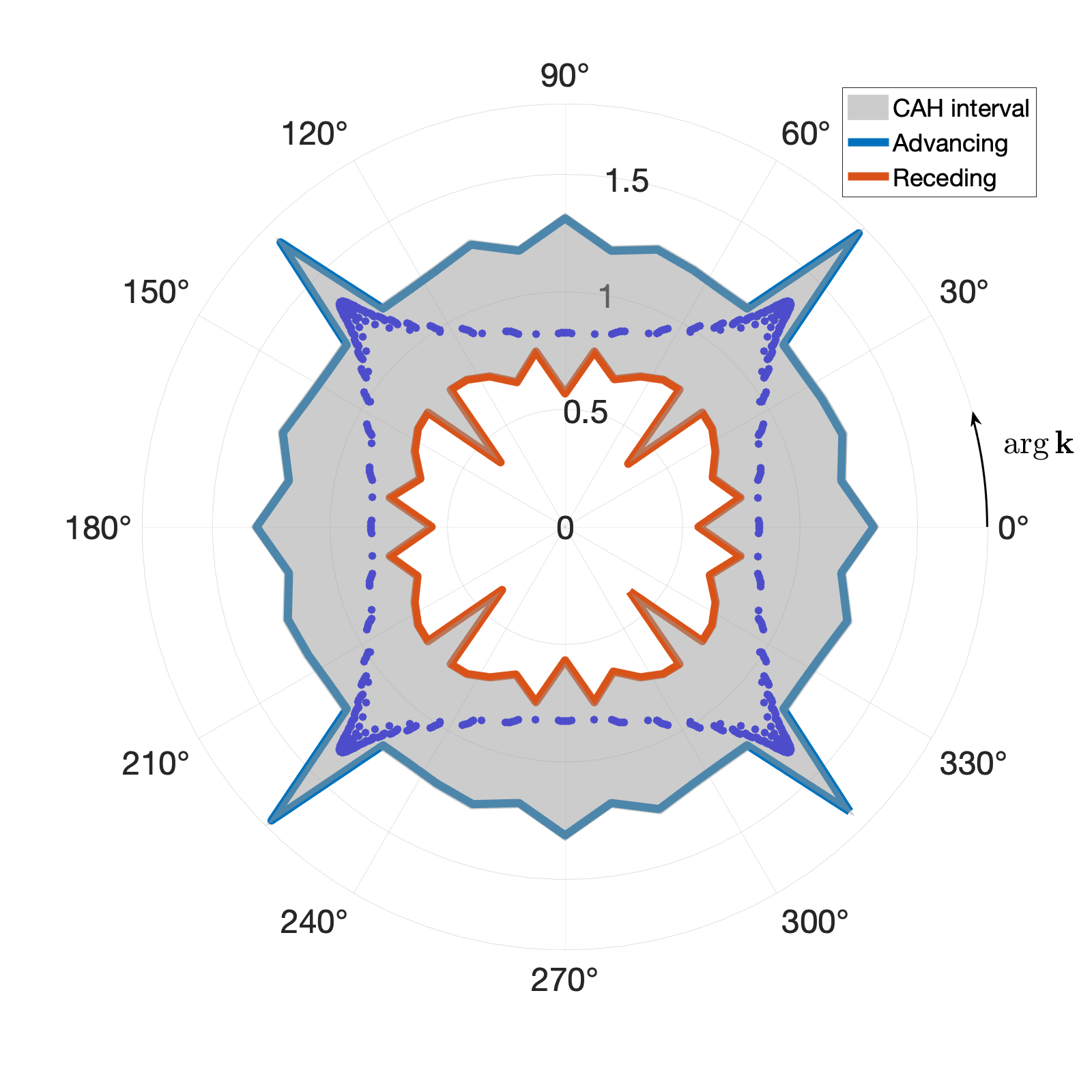}
\end{minipage} \\[3mm]
\begin{minipage}[c]{0.40\textwidth}\centering
    \includegraphics[width=\linewidth, trim={70 46 70 32}, clip]{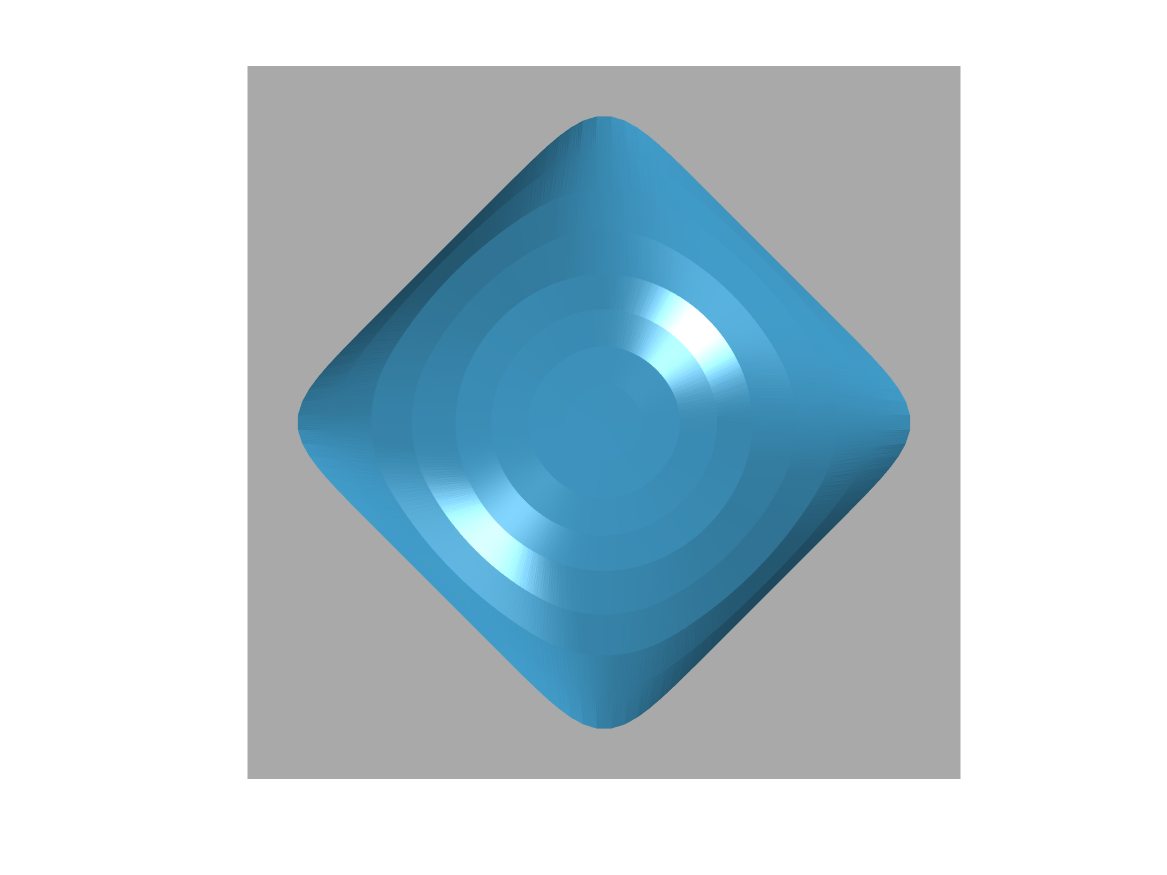}
\end{minipage}
\begin{minipage}[c]{0.40\textwidth}\centering
    \includegraphics[width=\linewidth, trim={150 107 130 127}, clip]{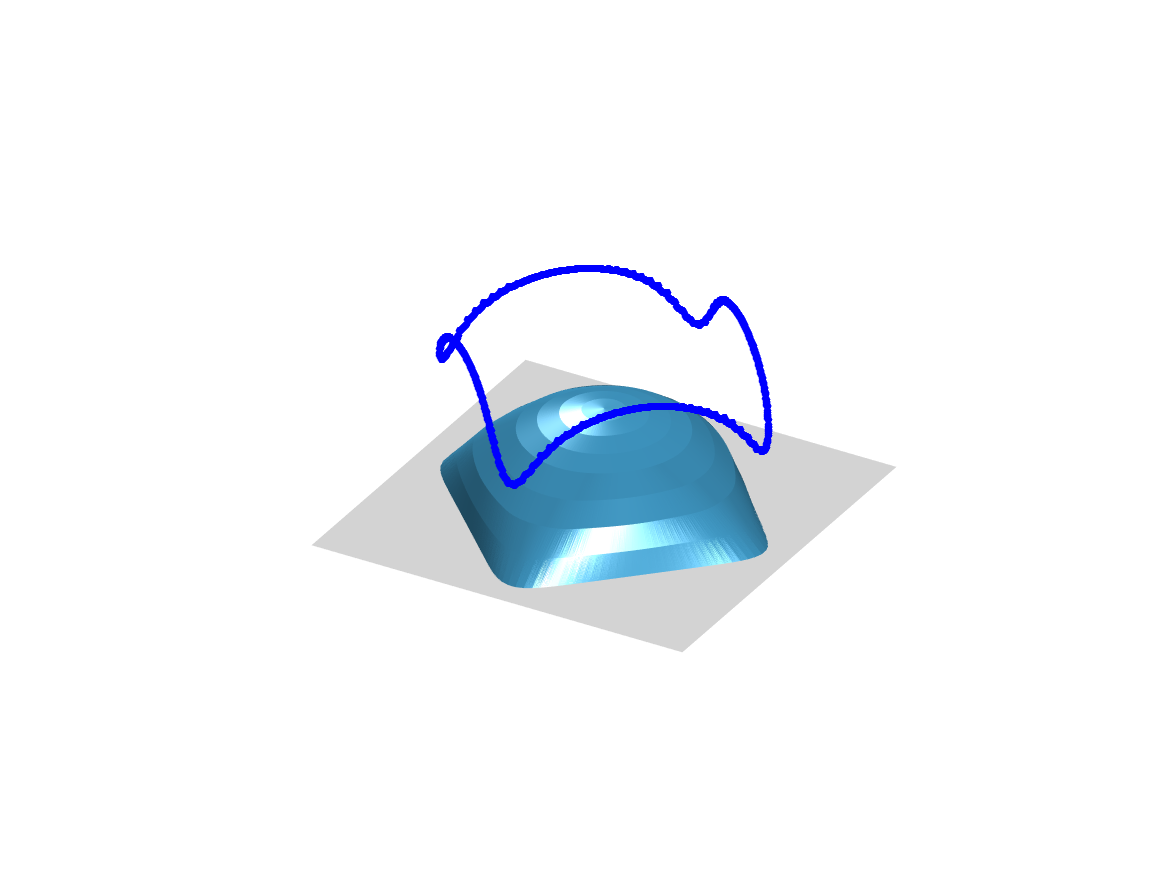}
\end{minipage}
\caption{{\bf (top left)} Zoom-in of a doubly periodic rough surface
\(z= \psi(x,y) = \eps\sin \frac{x}{\eps} \sin \frac{y}{\eps}\).
{\bf (top right)} The computed CAH interval (gray region) and the simulated droplet contact angles (blue dots) plotted as a function of the contact-line orientation $\arg \mathbf{k}$ for this surface.
{\bf (bottom left)} View of the droplet (light blue) from the top, with the solid surface shown in gray.
{\bf (bottom right)} Contact angle of the droplet (deep blue) plotted along the contact line, with its value represented by the $z$-coordinate.}
\label{fig:droplet_sideview}
\end{figure}

\paragraph{Illustrative example of an anisotropy effect}
To illustrate the CAH interval and its importance in applications, we present a computational example that uses the methods developed in this paper to study a droplet on an example micropatterned surface.
Given a particular periodic micropattern, our method computes the anisotropic CAH interval, $[\thetarec(\mathbf{k}),\thetaadv(\mathbf{k})]$, and thereby predicts whether that rough surface supports a stationary droplet with a desired wetted region.
We start with a simple micropattern with height profile
$z = \psi(x,y) = \eps \sin \frac{x}{\eps} \sin\frac{y}{\eps}$;
see Figure~\ref{fig:droplet_sideview}(top left).
Then, using the computational method developed in this work, we compute the CAH interval as a function of the contact-line orientation, $\mathbf{k}$.
For each orientation angle, $\arg \mathbf{k} = \arctan \frac{k_2}{k_1}$, we plot the interval of pinned angles in gray; see Figure~\ref{fig:droplet_sideview}~(top right).
Notice that the CAH interval is significantly larger at orientation angle $\tfrac{\pi}{4}$ (up to symmetry) than at any other angle.
Using the Surface Evolver software \cite{brakke1994surface}, we identify a particular square-like wetted region supporting a solution of the equations of capillarity~\eqref{equ:vol_minimizer_relaxed} with contact angle consistent with the anisotropic CAH; see Figure~\ref{fig:droplet_sideview}~(bottom).
In the bottom left panel we show a top-down view of the droplet displaying the square-like wetted set.
The square-like shape is oriented consistently with the diagonal micropattern, with side directions at angle \(\tfrac{\pi}{4}\).
In the bottom right panel, we superimpose the contact angle graphed in dark blue on the vertical axis over the contact line.
We also show the contact angle achieved by this droplet, as a function of the contact-line orientation, superimposed in dark blue on the CAH interval plot in the upper right panel.
Because the contact angles, plotted in blue, are inside the CAH interval, this droplet is stationary on this micropatterned surface.

\paragraph{Capillary mean curvature flow (CMCF)} Computationally, our goal will be to approximate solutions of \eqref{prob:infty_channel}
that additionally satisfy the far-field condition \eqref{equ:flat_like} for a given apparent contact angle, $\theta$. This is a challenging computational problem as it is posed on an infinite domain and involves a microscopic Neumann-like boundary condition on $\Gamma_{SLV}$. To facilitate this, we introduce the \emph{capillary mean curvature flow (CMCF)}, \begin{equation}
\label{e:CMCF}
\begin{cases}
V_n = \kappa, & \text{on } \Sigma_{LV},\\
\theta_{SLV}(\mathbf{x}) = \theta_Y(\mathbf{x}), &\mathbf{x}\in \Gamma_{SLV},
\end{cases}
\end{equation}
where $V_n$ denotes the normal velocity of the interface.
Away from the contact line, the liquid–vapor interface $\Sigma_{LV}$ evolves according to mean curvature flow, while at the contact line, it is constrained to meet the solid surface at Young's angle, $\theta_Y(\mathbf{x})$.
In this paper, we focus on hysteresis induced by surface roughness rather than chemical inhomogeneity so, in what follows, we assume a constant Young's angle $\theta_Y(\mathbf{x}) = \theta_Y$.
In \eqref{e:CMCF}, an additional boundary condition should be specified as $z \to \infty$ to find solutions that satisfy the far-field condition \eqref{equ:flat_like}; we will discuss this further in Section~\ref{s:2methods3d}.
Note that CMCF \eqref{e:CMCF} does not necessarily match the physics of the fluid dynamics; rather, the stationary states of CMCF are the same as the stationary states of the full fluid dynamical model, i.e., solutions to \eqref{prob:infty_channel}.

\subsection{Overview of results}\label{sec:main}
In Section~\ref{s:HysteresisInterval}, we propose two computational approaches to augment the CMCF \eqref{e:CMCF} with a far-field condition replacing \eqref{equ:flat_like} and to approximate the CAH interval.
The first approach is posed on a half-space domain and imposes a Neumann-type condition at infinity, which is updated according to a bisection strategy to approximate the CAH interval.
The second approach introduces a fictitious boundary to form a finite height channel and prescribes Dirichlet boundary conditions on the fictitious boundary.
By initializing the liquid–vapor interface $\Sigma_{LV}$ to prescribed lower and upper angle barrier configurations, the method converges to maximal and minimal stationary states.
These states are expected to attain the smallest and largest apparent contact angles, providing approximations for the receding and advancing angles, respectively.
Because the finite-channel method is less computationally expensive than the bisection method in our experiments, we use it throughout the remainder of the paper.

In Section~\ref{s:alg}, we introduce a novel computational method for simulating CMCF \eqref{e:CMCF} in a finite height channel where the solid surface has microscale roughness.
To handle the multiscale nature of the problem, we adapt the Schwarz alternating method to our setting \cite{lions1988schwarz,Toselli_2005}.
This \emph{two-scale alternating (TSA) method} decomposes the computational domain into a near-region close to the solid surface with microscale structure and a macroscale far-region; see Algorithm~\ref{alg:TSA}.
In the near-region, the CMCF satisfying the Young's angle condition at the contact line \eqref{e:CMCF} is evolved using an MBO diffusion-generated method.
In the far-region, the curvature of the minimal surface is well-approximated by its linearization and we explicitly solve the resulting equations via a Fourier series solution in terms of the boundary data from the diffusion-generated solution; see Theorem~\ref{thm:aprox_channel_min_upper_interface}.
In Theorem~\ref{t:TSA_energy}, we prove that every iteration of a reference reformulation of the TSA method decreases an approximation to the total interfacial energy \eqref{equ:interface_energy}.

In Section~\ref{s:Imple}, we describe our numerical implementation for computing the directional CAH on the doubly periodic surface,
\(z = \psi(x,y) = \eps \sin \frac{x}{\eps} \sin\frac{y}{\eps}\).
The computational domain is first rotated so that the propagation direction aligns with the $x$-direction.
The rotation is chosen to be rational in order to preserve the $\mathbb{Z}^2$-periodicity.
Consequently, the convolution in the MBO diffusion-generated method can be efficiently computed using the FFT.
To update the minimal surface in the far-region, Dirichlet boundary data is sampled and quadrature is performed to evaluate the Fourier series solution, again via FFT. The minimal surface in the far-region need only be reconstructed in the overlap region.
The two steps are iterated until a two-level stopping criterion is satisfied in the near-region.

In Section~\ref{s:NumExp}, we present the results of numerical experiments that approximate the CAH interval and demonstrate its anisotropic dependence on contact-line orientation.
For example, for the doubly periodic rough surface \(z=\eps\sin \frac{x}{\eps} \sin \frac{y}{\eps}\), illustrated in Figure~\ref{fig:droplet_sideview}~(top left), we observe a sharp change in the CAH interval at the diagonal direction \(\tfrac{\pi}{4}\); see Figure~\ref{fig:droplet_sideview}~(top right).
With a slight abuse of terminology, we refer to this sharp directional change as a discontinuity.
The shapes of the advancing and receding interfaces are also studied.
For example, a pair of advancing and receding interfaces is shown in Figure~\ref{fig:3D_apparent}.
For rational directions near the discontinuity, we observe overlapping patterns of the contact lines, suggesting a ``typewriter''-like transition of the advancing front.
Note that Figure~\ref{fig:3D_apparent} also serves as a 3D illustration of Figure~\ref{fig:Dirichlet_condition_wetting}~(center).

\begin{figure} [t]
    \centering
    \begin{subfigure}{0.4\textwidth}
        \centering
        \includegraphics[width=\linewidth]{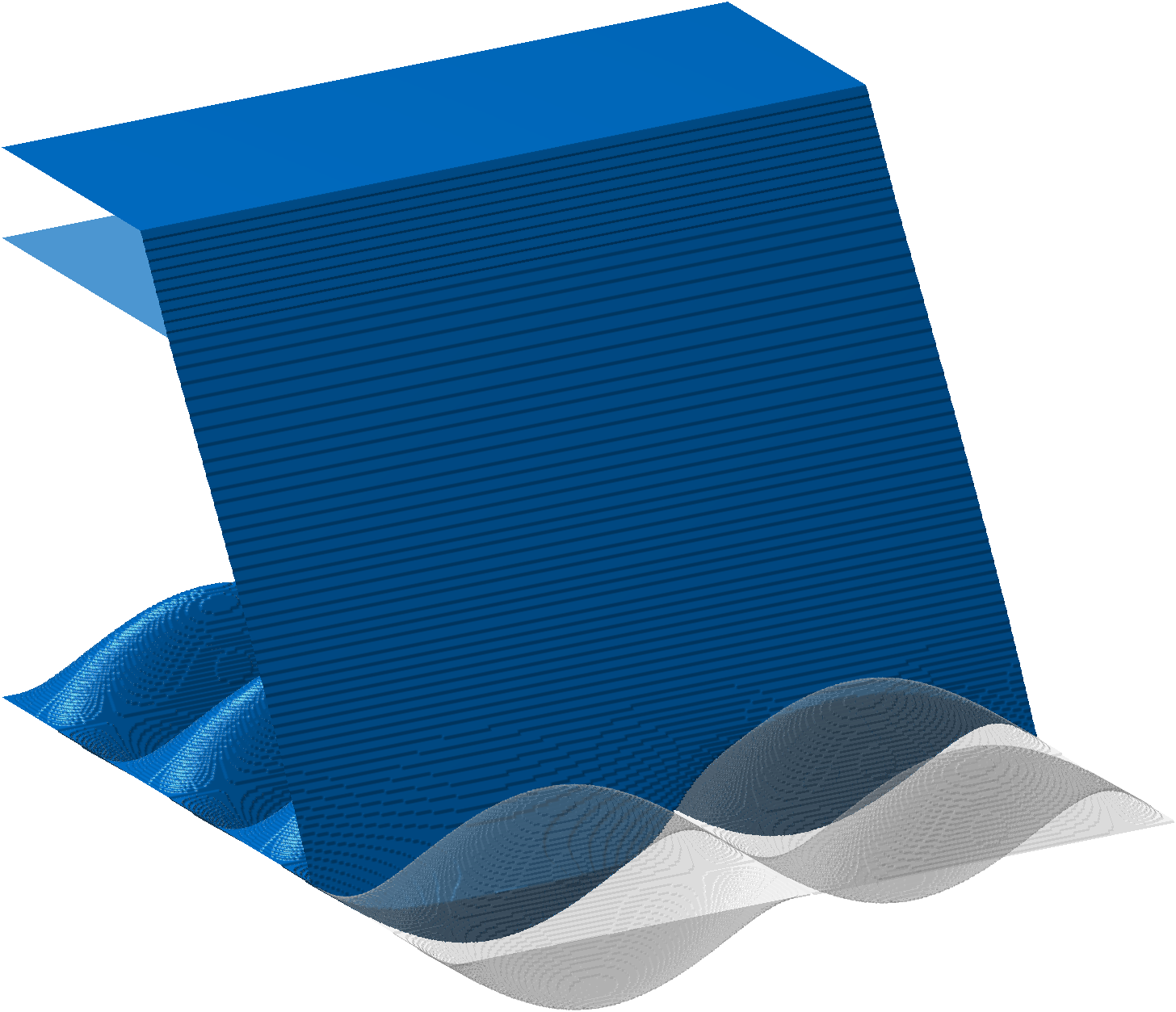}
        \caption{Receding angle}
    \end{subfigure}
    \hspace{1cm}
    \begin{subfigure}{0.4\textwidth}
        \centering
        \includegraphics[width=\linewidth]{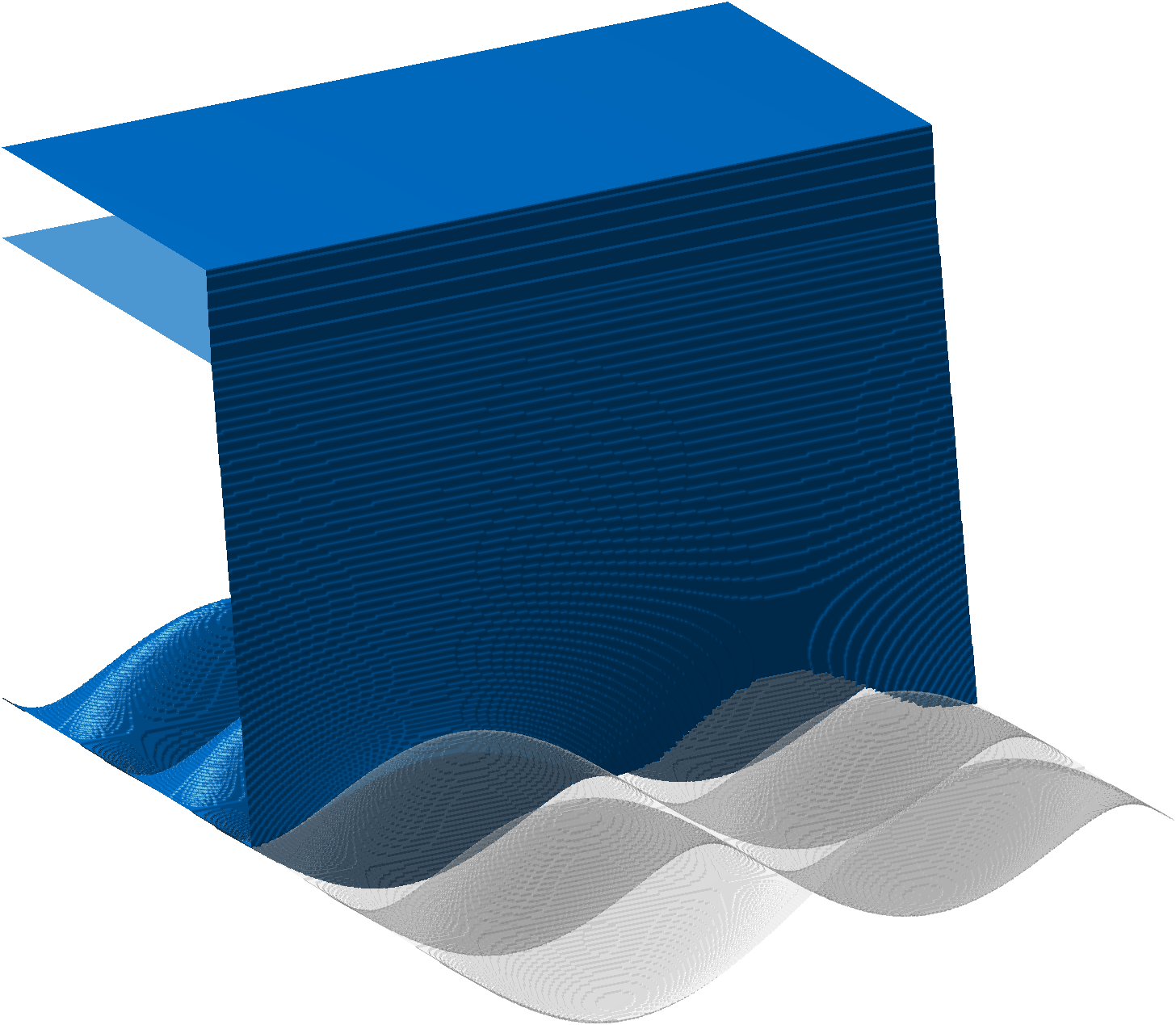}
        \caption{Advancing angle}
    \end{subfigure} \\
    \vspace{10mm}
    \begin{subfigure}{0.4\textwidth}
        \centering
        \includegraphics[width=\linewidth, trim={170 230 120 200}, clip]{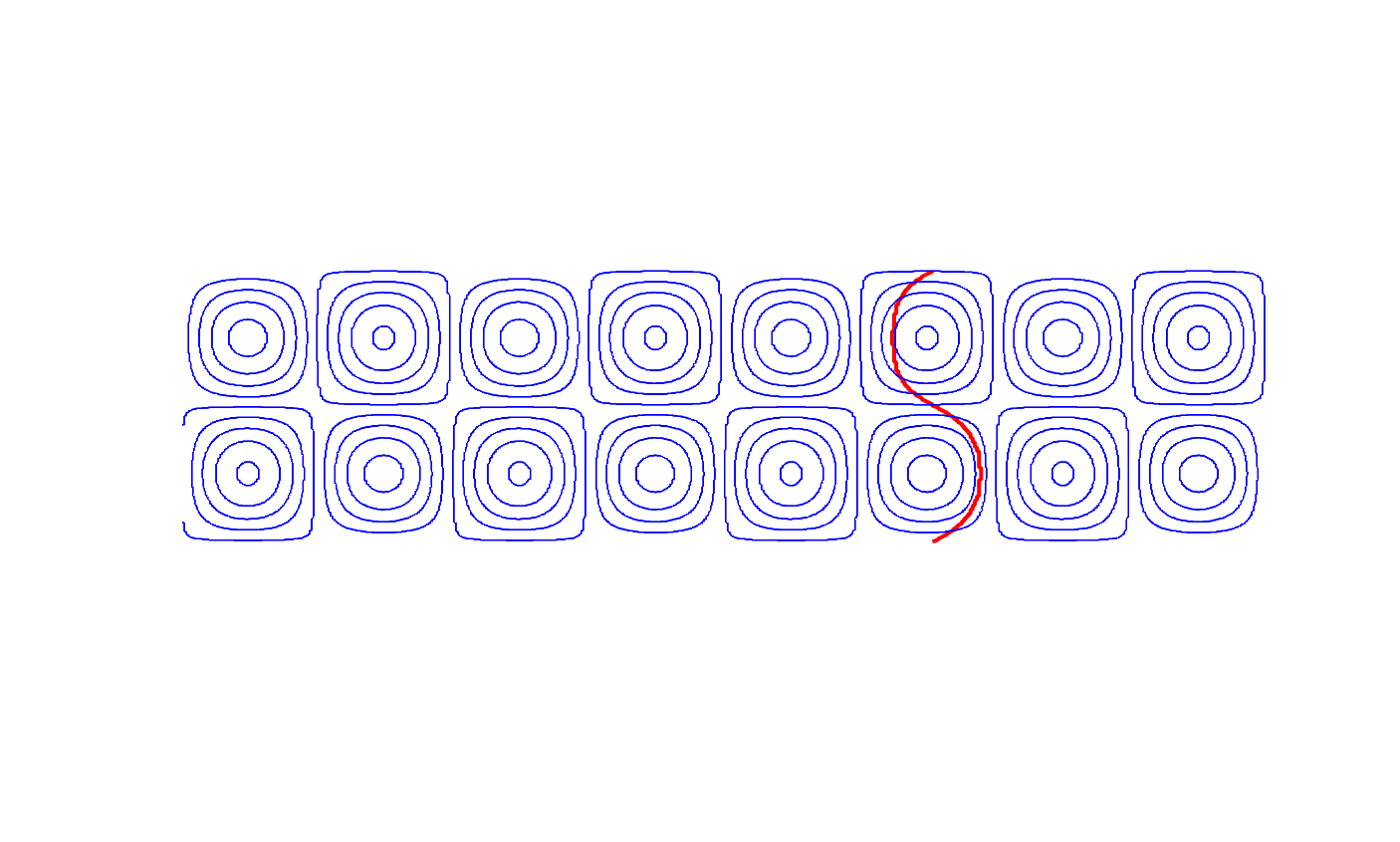}
        \caption{Receding contact line}
    \end{subfigure}
    \hspace{1cm}
    \begin{subfigure}{0.4\textwidth}
        \centering
        \includegraphics[width=\linewidth, trim={170 230 120 200}, clip]{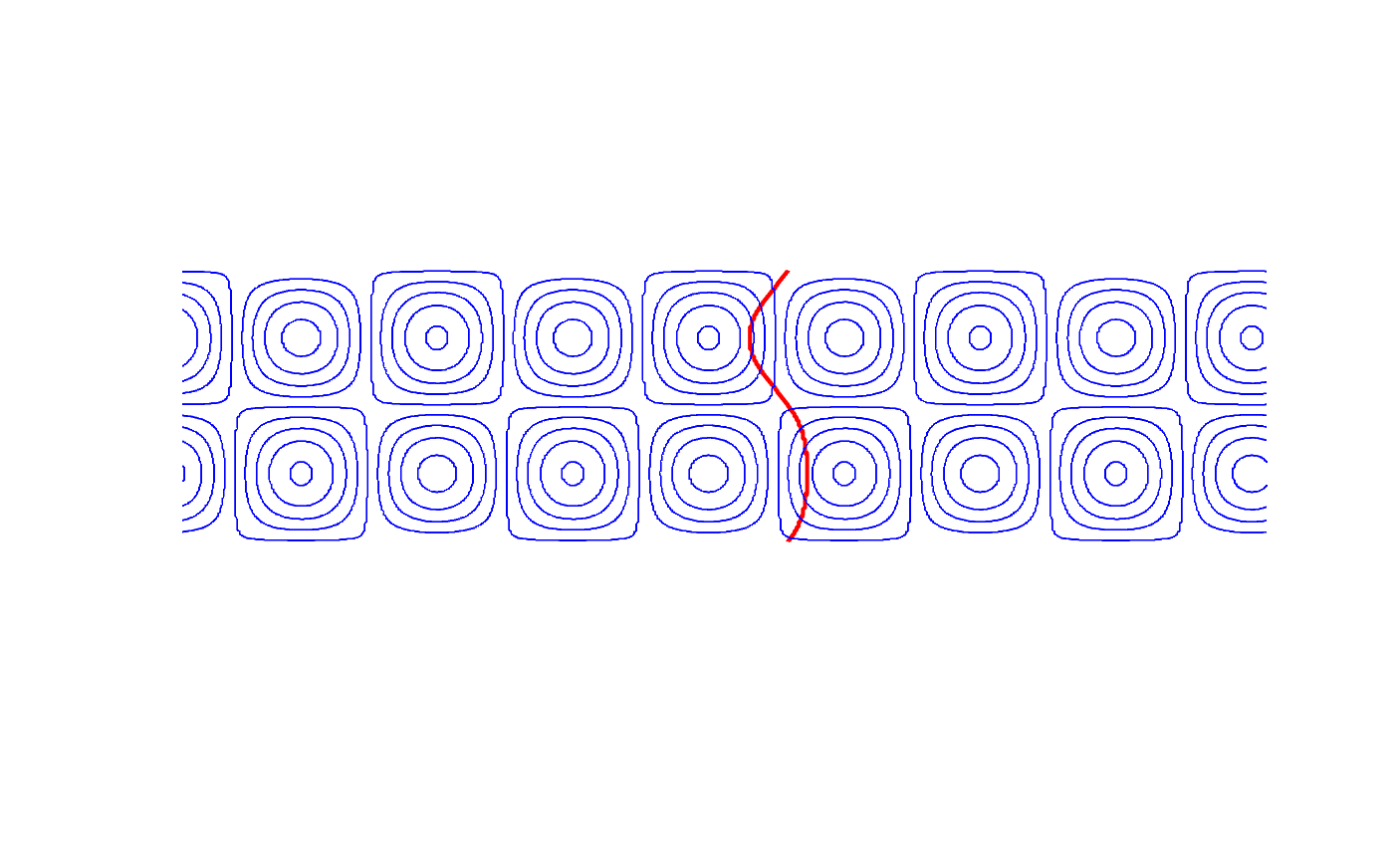}
        \caption{Advancing contact line}
    \end{subfigure}
    \caption{Interfaces and contact lines of the stationary state at direction \(\arg \mathbf{k} = 0\). (\textbf{a})--(\textbf{b}) The liquid–vapor interface is decomposed into a near-region (solid blue) and a far-region (transparent blue).
    The solid surface (gray) is defined by \(z = \psi(x,y)=\eps \sin \frac{x}{\eps} \sin \frac{y}{\eps}\). (\textbf{c})--(\textbf{d}) Contour plots of the contact lines (red) together with the rough surface (blue; maxima/minima labeled with \(+/-\)).
    The wetted region is located on the left.} \label{fig:3D_apparent}
\end{figure}

We discuss the results of the paper and future directions in Section~\ref{s:Disc}.

\section{Computation of the contact angle hysteresis (CAH) interval} \label{s:HysteresisInterval}
The computation of the CAH interval involves finding stationary states of the capillary mean curvature flow (CMCF) \eqref{e:CMCF}
with the far-field boundary condition \eqref{equ:flat_like}
on a rough surface.
Since this time-evolution problem is posed on an unbounded domain, with the far-field slope prescribed only in the limit $z\to\infty$, we construct and compare two approximation procedures.
To introduce the intuition, we start with a toy model.

\subsection{A toy model for the 2D hysteresis}
To illustrate the pinning mechanism and motivate the two approximation strategies, we first study a one-dimensional toy model.
This is a model of a particle on the line under the effects of an oscillatory potential gradient $f$ and a forcing $\alpha$:
\begin{align} \label{prob:toy_channel}
    \begin{cases}
        \dot{x} = -f(\frac{x}{\eps}
        )+\alpha, \\
        x(0) = x_0.
    \end{cases}
\end{align}
Here, $f$ is a continuous $1$-periodic function with range $f([0,1)) = [a,b]$.
The function $f$ is not given in explicit form, so it cannot be directly evaluated.
Our analysis therefore relies only on its periodicity and range.
The parameter $\eps\ll 1$ characterizes the spatial scale of oscillation.

In our context, this toy model can be viewed as a reduced description of contact-line pinning on a chemically patterned surface: \(x\) is a scalar coordinate measuring the displacement of the contact line along its direction of propagation, \([a,b]\) corresponds to the hysteresis interval, and $\alpha$ plays the role of the apparent contact angle.
The identification of a forcing with an angle is natural here: the apparent contact angle sets the net driving force on the contact line, so prescribing $\alpha$ is the one-dimensional analogue of prescribing the slope of the interface in the far field.
As such, if there is a fixed point in the system with parameter $\alpha$, we refer to such an $\alpha$ as a \emph{pinned force} and the interval of all such parameters as the pinning interval.
As a simplified model, it captures the essential mechanism of pinning and hysteresis without attempting to represent all microscopic geometry details present in higher dimensions.
Similar simplified modeling strategies are also employed, for example, in \cite{Mielke_2011}.

In this model, a stationary state exists precisely when $\alpha \in f([0,1)) = [a,b]$, so the pinning interval is $[a,b]$.
In general, as in the capillarity problem we are studying, it is not so easy to determine the stationary solutions.
We present two, more robust, approaches to determine the hysteresis interval in this toy model.

\subsubsection{Approach I: Bisection Method} \label{sec:toy_bisection}
A straightforward approach is based on bisection.
To iteratively approximate the maximal pinned force $b$,  we begin with an initial interval \([\alpha^{(0)}_{\mathrm{lower}}, \alpha^{(0)}_{\mathrm{upper}}] \ni b \).
At iteration $k$, we compute the midpoint
$ \alpha^{(k)}_{\mathrm{mid}} = \frac{\alpha^{(k)}_{\mathrm{lower}} + \alpha^{(k)}_{\mathrm{upper}}}{2}$ and
\begin{itemize}
\item if  \(-f(\frac{x}{\eps})+ \alpha^{(k)}_{\mathrm{mid}} > 0\) for all \(x\), we set $\alpha^{(k+1)}_{\mathrm{upper}} = \alpha^{(k)}_{\mathrm{mid}}$ and
$\alpha^{(k+1)}_{\mathrm{lower}} = \alpha^{(k)}_{\mathrm{lower}}$
so that the bracketing interval at the $k+1$ iteration is \([\alpha^{(k)}_{\mathrm{lower}},\alpha^{(k)}_{\mathrm{mid}}]\)
\item otherwise, we
set $\alpha^{(k+1)}_{\mathrm{lower}} = \alpha^{(k)}_{\mathrm{mid}}$ and
$\alpha^{(k+1)}_{\mathrm{upper}} = \alpha^{(k)}_{\mathrm{upper}}$
so that the  bracketing interval at the $k+1$ iteration is
\([\alpha^{(k)}_{\mathrm{mid}}, \alpha^{(k)}_{\mathrm{upper}}]\).
\end{itemize}
The sequence \(\{ \alpha^{(k)}_{\mathrm{mid}} \}\) converges linearly to the maximal pinned force \(b\).
Similarly, we can generate a sequence converging to the minimal pinned force \(a\).
The hysteresis interval $[a,b]$ is thus approximated by the bisection method with error \(O(\eps)\) through \(O\left(\log \eps^{-1} \right)\) iterations.

As \(f\) cannot be evaluated explicitly, we iterate the bisection by evolving the system \eqref{prob:toy_channel}.
If \(x\) is observed to traverse a displacement of several periods, we conclude that \(-f(\frac{x}{\eps})+ \alpha^{(k)}_{\mathrm{mid}} > 0\) for all \(x\).
However, the time to evolve the system increases as $\alpha^{(k)}_{\mathrm{mid}}$ approaches $b$.
To get past a single near-stationary region, where $|\alpha - f(\frac{x}{\eps})| = O(\delta)$ for $1>\delta>\eps$, the traversal time is expected to be \(O(\delta^{1/2})\), assuming that \(f\) behaves quadratically near its local extrema.

\subsubsection{Approach II: Forced Method} \label{sec:toy_finite_channel}

In this approach, we consider a modified system in which the constant parameter \(\alpha\) is replaced by a Lipschitz continuous, strictly decreasing function \(\alpha(x)\).
Consider the initial position $x(0) = x_0$ chosen such that \(\alpha(x_0)> b\), which ensures $\dot{x}>0$ for all $t>0$.
Along the trajectory, the parameter $\alpha(x(t))$ evolves according to
\[
\frac{d \alpha }{d t} = \frac{d \alpha}{d x} \frac{dx}{dt} <0,
\]
where the inequality follows from the monotonicity of \(\alpha(x)\).
For \(\eps\ll 1\), \(\alpha(x)\) remains nearly constant over each period of length \(\eps\).
We claim that when the system reaches a steady state \(x^\star\), i.e.  $f\left(\frac{x^\star}{\eps}\right) = \alpha(x^\star)$,
\begin{equation}\label{e.alpha-est}
    |\alpha(x^\star) - b| = O(\eps).
\end{equation}
See Figure~\ref{fig:forced_proof} for an illustration of the convergence, as \(\eps \to 0\), of the stationary state \(x^\star\) generated via the forced method towards  a maximum \(x_{\max}\), satisfying $f(\frac{x_{\max}}{\varepsilon}) = b$.

Similarly, the minimal value \(a\) can be approximated by  initializing \(x(0) = x_0\) such that \(\alpha(x_0)<a\) and applying the same procedure.
Combining the results, the hysteresis interval is determined with error \(O(\eps)\).

\begin{figure}
\centering
\begin{tikzpicture}[scale=.8]
\begin{axis}[
title={$\eps = 1$},
xlabel={$x$},
legend pos=north east,
width=7cm,
height=7cm
]
\addplot[domain=-4:4, thick, green] {1} node[pos=0.8, right] {};

\addlegendentry{$b$}

\addplot[domain=-4:4, thick, blue] {2/3 - x/3} node[pos=0.8, right] {};
\addlegendentry{$\alpha(x)$}

\addplot[domain=-4:4, thick, red] {sin(deg(x))};
\addlegendentry{$f(x)$}

\addplot[only marks, mark=*, color=black] coordinates {(-1,1)};
\node[anchor=south] at (axis cs:-1,1) {\scriptsize $\left(x_b,f(\frac{x_b}{\eps})=b\right)$};

\addplot[only marks, mark=*, color=black] coordinates {(0.5,0.4794)};
\node[anchor=west] at (axis cs:0.5,0.4794) {\scriptsize $\left(x^*,f(\frac{x^*}{\eps})\right)$};

\end{axis}
\end{tikzpicture}
\hspace{1cm}
\begin{tikzpicture}[scale=.8]
\begin{axis}[
title={$\eps = 0.1$},
xlabel={$x$},
legend pos=north east,  
width=7cm,
height=7cm,
samples=200,  % Increases resolution of the plot
domain=-4:4
]
% Plot alpha_max
\addplot[thick, green] {1};  
\addlegendentry{$b$}

% Plot alpha(x)
\addplot[thick, blue] {2/3 - x/3};  
\addlegendentry{$\alpha(x)$}

% Plot f(x/epsilon)
\addplot[thick, red] {sin(deg(x/0.1))};  
\addlegendentry{$f(\frac{x}{0.1})$}

\addplot[only marks, mark=*, color=black] coordinates {(-1,1)};
\node[anchor=south] at (axis cs:-1,1) {\scriptsize $\left(x_b,f(\frac{x_b}{\eps})=b\right)$};

\addplot[only marks, mark=*, color=black] coordinates {(-0.5281, 0.8472)};
\node[anchor=west] at (axis cs:-0.5281, 0.8472) {\scriptsize $\left(x^*,f(\frac{x^*}{\eps})\right)$};

\end{axis}
\end{tikzpicture}
\caption{An illustration of the forced method for computing the pinning interval for \eqref{prob:toy_channel}.
As  $\eps$ tends to zero, the stationary state \(x^\star\) generated via the forced method converges to an actual maximum \(x_{\max}\), satisfying $f(\frac{x_{\max}}{\varepsilon}) = b$.
See Section~\ref{sec:toy_finite_channel}.}
\label{fig:forced_proof}
\end{figure}

\begin{proof}[Proof of \eqref{e.alpha-est}]
    Let \(x_b\) denote the solution to \(\alpha(x)=b\).
    As $\eps$ is the period of \(f\left(\frac{x}{\eps}\right)\), there exists $x_{\max}\in[x_b,x_b+\eps]$ such that $f(\frac{x_{\max}}{\eps})=b$.
    Since $f\leq b=\alpha(x_b)$, we have $\alpha(x)-f(\frac{x}{\eps})\geq 0$ at $x=x_b$.
    Since $\alpha$ is decreasing and $x_{\max}\geq x_b$, we have $\alpha(x_{\max})\leq b = f(\frac{x_{\max}}{\eps})$, so $\alpha(x)-f(\frac{x}{\eps})\leq 0$ at $x=x_{\max}$.
    By the Intermediate Value Theorem, there exists $x^\star\in[x_b,x_{\max}]\subset [x_b,x_b+\eps]$ such that \(\alpha(x^\star)-f(\frac{x^\star}{\eps})= 0\).
    Choose $x^\star$ to be the smallest such root.
    Then $\dot{x}=\alpha(x)-f(\frac{x}{\eps})> 0$ over $(-\infty,x^\star)$.
    For any $x_0 \leq x^\star$, the limit of $x(t)$ is therefore $x^\star$.
    Therefore, $$|\alpha(x^\star) - b| \leq \mathrm{Lip}(\alpha)\,|x_{\max}-x_b|\leq \mathrm{Lip}(\alpha)\,\eps =O(\eps),$$ where $\mathrm{Lip}(\alpha)$ denotes the Lipschitz constant of $\alpha$.
\end{proof}

As in the bisection method, additional time is required to traverse the near-stationary regions of \eqref{prob:toy_channel}.
Each period cell, where $|\alpha - b| =O(\delta)$, costs $O( \delta^{-1/2})$ to traverse.
As there are $ O(\frac{\delta}{\eps})$ such period cells, the total time complexity to reach the $O(\eps)$ error is $O(\eps^{-1})$ summing over dyadic scales $1 \geq \delta \geq \eps$.

\subsection{Methods to approximate the CAH interval} \label{s:2methods3d}
We aim to augment CMCF \eqref{e:CMCF}  with a far-field condition that approximates \eqref{equ:flat_like}.
Each of the two approaches to the toy model has a counterpart for the capillary problem, obtained by supplying the far-field condition in the corresponding way.
The bisection method of Section~\ref{sec:toy_bisection} becomes a bisection over prescribed far-field angles, imposed as a Neumann-type condition at $z=\infty$ on the half-space; the forced method of Section~\ref{sec:toy_finite_channel} becomes CMCF in a channel of finite height, in which a Dirichlet condition on a fictitious upper boundary supplies the decreasing forcing.
We describe the two in turn.

\subsubsection{Approach I: Bisection method in a half-space domain}\label{sec:bisection}
For a $\mathbb{Z}^2$-periodic function $\psi(x,y)$, we consider a
rough solid surface  given by
$S:=\{(x,y,z)\in \mathbb{R}^3 \colon  z\leq \psi(x,y)\}$. The half-space domain is given by:
\[
 \Pi :=\{ (x,y,z)\in \mathbb{R}^3 \colon z>\psi(x,y) \}.
\]
We consider CMCF \eqref{e:CMCF} on the half-space domain with a Neumann-like boundary condition at $z = \infty$,
\begin{align} \label{prob:bisection}
    \begin{cases}
        V_n = \kappa, & \text{on } \Sigma_{LV},\\
        \theta_{SLV}(\mathbf{x}) = \theta_Y, &\mathbf{x}\in \Gamma_{SLV}, \\
        \mathbf{N}_{LV}\to(\sin(\theta)\mathbf{k},\cos(\theta)), &z \to \infty,
\end{cases}
\end{align}
where $\mathbf{N}_{LV}$ denotes the outward unit normal vector of $\Sigma_{LV}$ pointing from the liquid region toward the vapor region, and $\theta$ is a prescribed contact angle.
The objective is to determine the maximal and minimal solutions in terms of the apparent contact angle.

The CMCF \eqref{prob:bisection} may (and does) admit multiple stationary states.
To approximate the CAH interval, we seek the maximal and minimal stationary states.
These states can be approximated by evolving the CMCF~\eqref{prob:bisection} from planar initial data with different apparent contact angles.
We introduce the planar liquid region associated with an apparent contact angle $\theta$:
\begin{equation}\label{equ:L_theta_infty}
L_{\theta} := \{ (x,y,z)\in \Pi\setminus S\colon  \left(\begin{smallmatrix} x \\ y \end{smallmatrix}\right)\cdot\mathbf{k} \sin \theta + z \cos \theta < 0 \}.
\end{equation}
Note that $L_{\theta}$ in \eqref{equ:L_theta_infty} is a half-space intersecting $\Pi$ and is generically not the liquid region corresponding to a stationary solution of \eqref{prob:bisection} since it does not satisfy the Young's angle condition.

We define the lower and upper angle barrier configurations \( L_{\theta_-}, L_{\theta_+}\), where $\theta_- < \theta_+$ are chosen so that the apparent contact angle of every stationary solution \(L\) of CMCF~\eqref{prob:bisection} lies in \((\theta_-,\theta_+)\).
We apply a bisection algorithm starting with lower and upper bounds $\theta_-$ and $\theta_+$.
We evolve the flow from the initial data \( L_{\theta_-}, L_{\theta_+}\).
Starting from \(L_{\theta_-}\), the interface recedes, without converging to a stationary state.
Conversely, starting from \(L_{\theta_+}\), the interface advances.
Then we pick $\theta \in (\theta_-,\theta_+)$, for example $\theta = \tfrac{\theta_-+\theta_+}{2}$, and we proceed with a bisection algorithm as in Section~\ref{sec:toy_bisection} depending on whether the flow \eqref{prob:bisection} from initial data $L_\theta$ is observed to advance, recede, or remain pinned.

\subsubsection{Approach II: CMCF in a finite height channel}

Next we consider CMCF \eqref{e:CMCF} in a \emph{finite channel domain} with height $R$,
 \[
 \Pi_{R}:=\{ (x,y,z)\in \mathbb{R}^3\colon  \psi(x,y)<z<R \}.
 \]
The height $R$ is an artificial parameter of the method.
Because the interface relaxes exponentially to a plane away from the rough surface, as discussed in~\ref{sec:Justification_linearization}, the computed apparent contact angle is insensitive to $R$ once $R$ is large compared with the period of the roughness.
We augment CMCF \eqref{e:CMCF} with a Dirichlet boundary condition at the (fictitious) boundary $z=R$, to obtain
\begin{align} \label{prob:channel}
\begin{cases}
V_n = \kappa, & \text{on } \Sigma_{LV},\\
\theta_{SLV}(\mathbf{x}) = \theta_Y, &\mathbf{x}\in \Gamma_{SLV},\\
L \cap \{z=R\} =  \left\{(x,y,R) \colon
\left(\begin{smallmatrix} x \\ y \end{smallmatrix}\right) \cdot \mathbf{k}  \leq 0 \right\}.
\end{cases}
\end{align}
The Dirichlet boundary condition in the last line of \eqref{prob:channel} aligns the outer normal of the liquid–vapor interface at \(z=R\) with the prescribed apparent normal \(\mathbf{k}\).

As in Section~\ref{sec:bisection}, we seek the maximal and minimal stationary states.
Due to the Dirichlet condition, these states can be obtained by evolving the CMCF~\eqref{prob:channel} from the adapted lower and upper angle barrier configurations.
For the channel setting, we consider the planar liquid region
\begin{equation*}
L_{\theta} := \{ (x,y,z)\in \Pi_R\setminus S\colon  \left(\begin{smallmatrix} x \\ y \end{smallmatrix}\right)\cdot\mathbf{k} \sin \theta + (z-R) \cos \theta < 0 \}.
\end{equation*}
With a slight abuse of notation, the corresponding lower and upper angle barrier configurations in the channel domain are denoted by
\begin{equation}\label{equ:angle_barriers}
\hbox{lower and upper angle barrier configurations} \,\ L_{\theta_-}, \, L_{\theta_+},
\end{equation}
where $\theta_- < \theta_+$ are chosen such that $L_{\theta_-}\supset L$ and $L_{\theta_+}\subset L$ for any stationary solution \(L\) of  CMCF~\eqref{prob:channel}.
Starting from the lower angle barrier configuration \(L_{\theta_-}\), the interface is observed to recede and converge to the maximal stationary state.
The maximal stationary state attains the smallest apparent contact angle among all the stationary states, and this creates an approximation of $\thetarec$, see Figure~\ref{fig:lower_upper_angle_barrier}~(left).
Conversely, starting from the upper angle barrier configuration \(L_{\theta_+}\), the interface is observed to advance and converge to the minimal stationary state, which attains the largest apparent contact angle and thus provides an approximation of $\thetaadv$, see Figure~\ref{fig:lower_upper_angle_barrier}~(right).

Existence of lower and upper angle barrier configurations can be established in the chemically patterned case by choosing $\theta_+ > \max_{\mathbf{x}} \theta_{Y}(\mathbf{x})$ and $\theta_- < \min_{\mathbf{x}} \theta_Y(\mathbf{x})$.
However, for general rough surfaces, there is no theoretical guarantee that the angle barriers will produce an advancing / receding solution.
In practice, we simulate CMCF~\eqref{prob:channel} using lower and upper angle barriers chosen sufficiently far from the observed pinning interval.

The initial interfaces are empirically observed to evolve monotonically across several periods of the roughness before converging to the corresponding stationary states.
The same empirical observation also holds in the previous bisection setting.

\begin{figure}[t]
\centering
\begin{tikzpicture}[x={(1cm,0cm)}, y={(0.4cm,0.6cm)}, z={(0cm,1cm)}]

% Left
\begin{scope}[xshift=-4cm]
% rough surfavce
\coordinate (A) at (-3,-1,0);
\coordinate (B) at (3,-1,0);
\coordinate (C) at (-3,1,0);
\coordinate (D) at (3,1,0);
% interfacve
\coordinate (E) at (-2,-1,2);
\coordinate (F) at (-2,1,2);
\coordinate (G) at (2,-1,0);
\coordinate (H) at (2,1,0);
% receding endpoint
\coordinate (I) at (0,-1,0);
\coordinate (J) at (0,1,0);
% upper floor
\coordinate (K) at (-3,-1,2);
\coordinate (L) at (3,-1,2);
\coordinate (M) at (-3,1,2);
\coordinate (N) at (3,1,2);

% Blue box
\draw[black,thick] (A)--(B) (C)--(D);
\draw[black,thick] (K)--(L) (M)--(N);
\draw[fill=red!20,draw=red] (A) -- (C) -- (J)--(I)--cycle;
\draw[blue,thick] (E)--(F) (E)--(G) (F)--(H);
\draw[blue,thick,decorate,decoration={snake,amplitude=0.3mm,segment length=3mm}] (G) --(H);
\node at (-3,0,2) {L};
\node at (1,0,2) {V};
\node at (0,0,-1) {S};
\node at (-2,1.5,2) {$z=R$};
\draw[->,thick,black] (2.5,0,0) -- (1.5,0,0); 
\end{scope}
% Right
\begin{scope}[xshift=4cm]
% rough surfavce
\coordinate (A) at (-3,-1,0);
\coordinate (B) at (3,-1,0);
\coordinate (C) at (-3,1,0);
\coordinate (D) at (3,1,0);
% interfacve
\coordinate (E) at (2,-1,2);
\coordinate (F) at (2,1,2);
\coordinate (G) at (-2,-1,0);
\coordinate (H) at (-2,1,0);
% receding endpoint
\coordinate (I) at (0,-1,0);
\coordinate (J) at (0,1,0);
% upper floor
\coordinate (K) at (-3,-1,2);
\coordinate (L) at (3,-1,2);
\coordinate (M) at (-3,1,2);
\coordinate (N) at (3,1,2);

\draw[black,thick] (A)--(B) (C)--(D);
\draw[black,thick] (K)--(L) (M)--(N);
\draw[fill=red!20,draw=red] (B) -- (D) -- (J)--(I)--cycle;
\draw[blue,thick] (E)--(F) (E)--(G) (F)--(H);
\draw[blue,thick,decorate,decoration={snake,amplitude=0.3mm,segment length=3mm}] (G) --(H);
\node at (-1,0,2) {L};
\node at (3,0,2) {V};
\node at (0,0,-1) {S};
\node at (2,1.5,2) {$z=R$};
\draw[->,thick,black] (-2.5,0,0) -- (-1.5,0,0); 
\end{scope}
\end{tikzpicture}
\caption{ CMCF in a finite height channel~\eqref{prob:channel} is initialized from lower (\textbf{left}) and upper (\textbf{right}) angle barrier configurations; the hysteresis interval is shown as a pinning region on the rough surface, marked in red.}
\label{fig:lower_upper_angle_barrier}
\end{figure}

\begin{remark} \label{rem:cost}
In our observations, the bisection method~\eqref{prob:bisection} is computationally more expensive than the CMCF in a finite height channel \eqref{prob:channel}; therefore, in what follows, we restrict our attention to CMCF  in a finite height channel \eqref{prob:channel}.
\end{remark}

\section{A two-scale alternating (TSA) method for capillary mean curvature flow (CMCF) \texorpdfstring{\eqref{prob:channel}}{}} \label{s:alg}
Computational simulation of the CMCF in a finite height channel~\eqref{prob:channel} is challenging due to the multiple spatial scales that appear in this problem.
Naively using a uniform fine discretization throughout the domain is prohibitively computationally expensive and inefficient.
To resolve this issue, we introduce a two-scale alternating (TSA) method  based on the Schwarz alternating approach.

The classical Schwarz alternating method \cite{Schwarz+1869+105+120} was designed to solve  boundary value problems (e.g., the Laplace equation) on  domains that can be decomposed into simpler pieces.
The classical Schwarz method decomposes the domain into overlapping subdomains and alternately solves the governing problem on each subdomain using boundary data from the most recent iterate.
The domain $\Omega$ is decomposed into two overlapping subdomains $\Omega_1$ and $\Omega_2$.
First, one solves the PDE on $\Omega_1$ using boundary data from an initial guess.
Then, the PDE is solved on $\Omega_2$ using the updated boundary data from $\Omega_1$.
The process is iterated until the two subdomain solutions converge in the transition region, $\Omega_1 \cap \Omega_2$.

Here, we adapt the Schwarz alternating idea to simulate the CMCF~\eqref{prob:channel}, which allows us to efficiently handle the different spatial scales.
Specifically, we decompose the computational domain into two overlapping subdomains.
The microscopic \emph{near-region}, \(C_{\mathrm{near}}\), contains the contact line, where a relatively fine discretization is used to capture the interfacial dynamics.
The macroscopic \emph{far-region}, \(C_{\mathrm{far}}\), is away from the solid boundary, where the geometry is relatively simple.
The liquid–vapor interface is updated alternately between the two subdomains.
In the near-region, a single iteration of a variant of the MBO diffusion-generated method is performed.
In the far-region, an approximate minimal surface is constructed by solving a linearized problem.
We refer to this multiscale iterative procedure as the \emph{two-scale alternating (TSA) method}.
This TSA method is illustrated in Figure~\ref{fig:zoomin_channel} and described in more detail in the following sections.

\begin{figure}[t]
\centering
\begin{subfigure}{0.4\textwidth}
\centering
\begin{tikzpicture}[x={(1cm,0cm)}, y={(0.0cm,0.0cm)}, z={(0cm,1.0cm)}]
\begin{scope}[xshift=0cm]
% rough surfavce
\coordinate (A) at (-2,-1,0);
\coordinate (B) at (3,-1,0);
\coordinate (C) at (-2,1,0);
\coordinate (D) at (3,1,0);
% interfacve
\coordinate (E) at (-1,-1,2);
\coordinate (F) at (-1,1,2);
\coordinate (G) at (1,-1,0);
\coordinate (H) at (1,1,0);
% upper transition trace
\coordinate (I) at (0.5,-1,0.5);
\coordinate (J) at (0.5,1,0.5);
% upper floor
\coordinate (K) at (-2,-1,2);
\coordinate (L) at (3,-1,2);
\coordinate (M) at (-2,1,2);
\coordinate (N) at (3,1,2);

\coordinate (O) at (-1,1,0.5);
\draw pic[draw=black,
line width=1.0pt, 
angle radius=5pt,
angle eccentricity=1.2]
{angle = O--I--G};
\node at (0.3,-1,0.2) {$\theta_{i}$};

\draw[black,thick] (A)--(B) (C)--(D);
\draw[black,thick] (K)--(L) (M)--(N);
\draw[blue,thick] (E)--(F) (E)--(G) (F)--(H);
\draw[blue,thick,decorate,decoration={snake,amplitude=0.7mm,segment length=3mm}] (G) --(H);
\draw[blue,thick,dashed,decorate,decoration={snake,amplitude=0.5mm,segment length=3mm}] (I) --(J);
\node at ($(H)+(-2.5,0,0.25)$) {$z=r$};
\node at ($(H)+(0,0,-0.25)$) {$C_{\mathrm{near}}$};
\node at ($(H)+(-0.6,0,1.0)$) {$C_{\mathrm{far}}$};
\node at (-2,0,1) {L};
\node at (2,0,1) {V};
\node at (-1,0,-1) {S};
\node at ($(F)+(0,0.5,.3)$) {$z=R$};
\draw[red, thick] ($ (G)+(-1,0,-0.5) $) -- ($ (G)+(1,0,-0.5) $) --
($ (G)+(1,0,0.5) $) -- ($ (G)+(-1,0,0.5) $) -- cycle;
\draw[red, thick] ($ (H)+(-1,0,-0.5) $) -- ($ (H)+(1,0,-0.5) $) --
($ (H)+(1,0,0.5) $) -- ($ (H)+(-1,0,0.5) $) -- cycle;
\draw[red, thick] ($ (H)+(-1,0,-0.5) $) -- ($ (H)+(1,0,-0.5) $) --
($ (G)+(1,0,-0.5) $) -- ($ (G)+(-1,0,-0.5) $) -- cycle;
\draw[red, thick] ($ (H)+(-1,0,0.5) $) -- ($ (H)+(1,0,0.5) $) --
($ (G)+(1,0,0.5) $) -- ($ (G)+(-1,0,0.5) $) -- cycle;
\draw[green, thick] ($ (H)+(-2,0,0.25) $) -- ($ (H)+(2,0,0.25) $) --($ (G)+(2,0,0.25) $) -- ($ (G)+(-2,0,0.25) $)--cycle;
\end{scope}
\end{tikzpicture}
\end{subfigure}
\hspace{1cm}
\begin{subfigure}{0.4\textwidth}
\centering
\begin{tikzpicture}[x={(1cm,0cm)}, y={(0.4cm,0.6cm)}, z={(0cm,1cm)}]
\begin{scope}[xshift=0cm]
% rough surfavce
\coordinate (A) at (-2,-1,0);
\coordinate (B) at (3,-1,0);
\coordinate (C) at (-2,1,0);
\coordinate (D) at (3,1,0);
% interfacve
\coordinate (E) at (-1,-1,2);
\coordinate (F) at (-1,1,2);
\coordinate (G) at (1,-1,0);
\coordinate (H) at (1,1,0);
% upper transition trace
\coordinate (I) at (0.5,-1,0.5);
\coordinate (J) at (0.5,1,0.5);
% upper floor
\coordinate (K) at (-2,-1,2);
\coordinate (L) at (3,-1,2);
\coordinate (M) at (-2,1,2);
\coordinate (N) at (3,1,2);

\draw[black,thick] (A)--(B) (C)--(D);
\draw[black,thick] (K)--(L) (M)--(N);
\draw[blue,thick] (E)--(F) (E)--(G) (F)--(H);
\draw[blue,thick,decorate,decoration={snake,amplitude=0.7mm,segment length=3mm}] (G) --(H);
%\draw[blue,thick,dashed,decorate,decoration={snake,amplitude=0.5mm,segment length=3mm}] (I) --(J);
\node at (-2,0,2) {L};
\node at (1,0,2) {V};
\node at (-1,0,-1) {S};
%\node at ($(F)+(0,0.5,0)$) {$z=R$};
\draw[red, thick] ($ (G)+(-1,0,-0.5) $) -- ($ (G)+(1,0,-0.5) $) --
($ (G)+(1,0,0.5) $) -- ($ (G)+(-1,0,0.5) $) -- cycle;
\draw[red, thick] ($ (H)+(-1,0,-0.5) $) -- ($ (H)+(1,0,-0.5) $) --
($ (H)+(1,0,0.5) $) -- ($ (H)+(-1,0,0.5) $) -- cycle;
\draw[red, thick] ($ (H)+(-1,0,-0.5) $) -- ($ (H)+(1,0,-0.5) $) --
($ (G)+(1,0,-0.5) $) -- ($ (G)+(-1,0,-0.5) $) -- cycle;
\draw[red, thick] ($ (H)+(-1,0,0.5) $) -- ($ (H)+(1,0,0.5) $) --
($ (G)+(1,0,0.5) $) -- ($ (G)+(-1,0,0.5) $) -- cycle;
\draw[green, thick] ($ (H)+(-2,0,0.25) $) -- ($ (H)+(2,0,0.25) $) --($ (G)+(2,0,0.25) $) -- ($ (G)+(-2,0,0.25) $)--cycle;
\end{scope}
\end{tikzpicture}
\end{subfigure}

\vspace{1cm}
\begin{subfigure}{0.4\textwidth}
\centering
\begin{tikzpicture}[x={(1cm,0cm)}, y={(0.4cm,0.6cm)}, z={(0cm,1cm)}]
\begin{scope}[xshift=0cm]
% near-region
\coordinate (A) at (-3,-1,-1);
\coordinate (B) at (2,-1,-1);
\coordinate (C) at (-3,-1,1);
\coordinate (D) at (2,-1,1);
\coordinate (E) at (-3,1,-1);
\coordinate (F) at (2,1,-1);
\coordinate (G) at (-3,1,1);
\coordinate (H) at (2,1,1);
% interface 
\coordinate (I) at (1,-1,-0.5);
\coordinate (J) at (1,1,-0.5);
\coordinate (K) at (-2,-1,1);
\coordinate (L) at (-2,1,1);
% cuting trance of interface
\coordinate (M) at (-1,-1,0.5);
\coordinate (N) at (-1,1,0.5);

\draw[red, thick] (A) -- (B) --
(D) -- (C) -- cycle;
\draw[red, thick] (E) -- (F) --
(H) -- (G) -- cycle;
\draw[red, thick] (A) -- (B) --
(F) -- (E) -- cycle;
\draw[red, thick] (C) -- (D) --
(H) -- (G) -- cycle;
\draw[green, thick, dashed] ($(C)-(0.5,0,0.5)$) -- ($(D)-(-0.5,0,0.5)$) --
($(H)-(-0.5,0,0.5)$) -- ($(G)-(0.5,0,0.5)$) -- cycle;
\draw[black, thick] (-3,-1,-0.5) -- (2,-1,-0.5)--(2,1,-0.5)--(-3,1,-0.5)--cycle;
%\draw[black, thick] (-3,-1,0.5) -- (2,-1,0.5)--(2,1,0.5)--(-3,1,0.5)--cycle;
\draw[blue, thick] (I) -- (K) (J)--(L);
%\draw[blue,thick,decorate,decoration={snake,amplitude=0.1mm,segment length=3mm}] (M) --(N);
\draw[blue,thick,decorate,decoration={snake,amplitude=0.7mm,segment length=3mm}] (I) --(J);
\draw[blue,thick,dashed,decorate,decoration={snake,amplitude=0.5mm,segment length=3mm}] (K) --(L);
\node at (-2.5,0.3,1.2) {L};
\node at (0,0.3,1.2) {V};
\node at (-2,0,-1.3) {S};
\end{scope}
\end{tikzpicture}
\end{subfigure}
\hspace{1cm}
\begin{subfigure}{0.4\textwidth}
\centering
\begin{tikzpicture}[x={(1cm,0cm)}, y={(0.4cm,0.6cm)}, z={(0cm,1cm)}]
\begin{scope}[xshift=-4cm]
% rough surfavce
\coordinate (A) at (-2,-1,0);
\coordinate (B) at (3,-1,0);
\coordinate (C) at (-2,1,0);
\coordinate (D) at (3,1,0);
% interfacve
\coordinate (E) at (-1,-1,2);
\coordinate (F) at (-1,1,2);
\coordinate (G) at (1,-1,0);
\coordinate (H) at (1,1,0);
% upper transition trace
\coordinate (I) at (0.5,-1,0.5);
\coordinate (J) at (0.5,1,0.5);
% upper floor
\coordinate (K) at (-2,-1,2);
\coordinate (L) at (3,-1,2);
\coordinate (M) at (-2,1,2);
\coordinate (N) at (3,1,2);

% Blue box
\draw[black,thick] (A)--(B) (C)--(D);
\draw[black,thick] (K)--(L) (M)--(N);
\draw[blue,thick] (E)--(F) (E)--(G) (F)--(H);
%\draw[blue,thick,decorate,decoration={snake,amplitude=0.7mm,segment length=3mm}] (G) --(H);
\draw[blue,thick,decorate,decoration={snake,amplitude=0.6mm,segment length=3mm}] ($(G)+(-0.25,0,0.25)$) --($(H)+(-0.25,0,0.25)$);
\draw[blue,thick,dashed,decorate,decoration={snake,amplitude=0.5mm,segment length=3mm}] ($(G)+(-0.5,0,0.5)$) --($(H)+(-0.5,0,0.5)$);
\draw[blue,thick,dashed,decorate,decoration={snake,amplitude=0.3mm,segment length=3mm}] ($(G)+(-0.75,0,0.75)$) --($(H)+(-0.75,0,0.75)$);
\draw[blue,thick,dashed,decorate,decoration={snake,amplitude=0.1mm,segment length=3mm}] ($(G)+(-1,0,1)$) --($(H)+(-1,0,1)$);
\draw[blue,thick,dashed,decorate,decoration={snake,amplitude=0.05mm,segment length=3mm}] ($(G)+(-1.5,0,1.5)$) --($(H)+(-1.5,0,1.5)$);
%\draw[blue,thick] (I) --(J);
\node at (-2,0,2) {L};
\node at (1,0,2) {V};
\node at (-1,0,-1) {S};
%\node at ($(F)+(0,0.5,0)$) {$z=R$};
\draw[red, thick,dashed] ($ (G)+(-1,0,-0.5) $) -- ($ (G)+(1,0,-0.5) $) -- ($ (G)+(1,0,0.5) $) -- ($ (G)+(-1,0,0.5) $) -- cycle;
\draw[red, thick,dashed] ($ (H)+(-1,0,-0.5) $) -- ($ (H)+(1,0,-0.5) $) -- ($ (H)+(1,0,0.5) $) -- ($ (H)+(-1,0,0.5) $) -- cycle;
\draw[red, thick,dashed] ($ (H)+(-1,0,-0.5) $) -- ($ (H)+(1,0,-0.5) $) -- ($ (G)+(1,0,-0.5) $) -- ($ (G)+(-1,0,-0.5) $) -- cycle;
\draw[red, thick,dashed] ($ (H)+(-1,0,0.5) $) -- ($ (H)+(1,0,0.5) $) -- ($ (G)+(1,0,0.5) $) -- ($ (G)+(-1,0,0.5) $) -- cycle;
\draw[green, thick] ($ (H)+(-2,0,0.25) $) -- ($ (H)+(2,0,0.25) $) --($ (G)+(2,0,0.25) $) -- ($ (G)+(-2,0,0.25) $)--cycle;
\end{scope}
\end{tikzpicture}
\end{subfigure}
\caption{Diagram for the two-scale alternating (TSA) method for capillary mean curvature flow (CMCF) \eqref{prob:channel}.
(\textbf{top left}) Side view of the channel.
The liquid–vapor interface $\Sigma_{LV}$ is shown in blue, the red box indicates the near-region, and the green line marks the lower boundary of the far-region.
(\textbf{top right}) Angled side view of the channel.
(\textbf{bottom left}) In the near-region, the liquid–vapor interface $\Sigma_{LV}$ evolves using an MBO diffusion-generated method according to \eqref{prob:near_channel}.
(\textbf{bottom right}) In the far-region, the liquid–vapor interface $\Sigma_{LV}$ is constructed using lower boundary data from the previous iteration and the explicit solution to the linearized system given in Theorem~\ref{thm:aprox_channel_min_upper_interface}.}
\label{fig:zoomin_channel}
\end{figure}

The near-region \(C_{\mathrm{near}}\) is defined, for \(0<r\ll R\) and \(c>1\), by
\[
 C_{\mathrm{near}}:=\{ (x,y,z)\in \mathbb{R}^3\colon  \psi(x,y)<z<cr \},
 \]
and is illustrated by the red box in Figure~\ref{fig:zoomin_channel}. \(c>1\) sets the height of the near-region in units of \(r\), and is fixed to \(c=\frac{6}{5}\) in our implementation.
The far-region \(C_{\mathrm{far}}\) is defined by
\[
C_{\mathrm{far}} :=\{ (x,y,z)\in \mathbb{R}^3\colon  r<z<R \},
\]
which overlaps with $C_{\mathrm{near}}$ and, in Figure~\ref{fig:zoomin_channel}, is illustrated as lying above the green plane.

\subsection{Near-region: MBO diffusion-generated method}\label{sec:wettingMBO}

In the near-region, $C_{\mathrm{near}}$, the interface evolves at a much smaller length scale compared to the far-region and develops complex geometry near the contact line.
Therefore, we will use a fine discretization to capture the local interfacial dynamics.

In this region, we consider the CMCF~\eqref{prob:channel} with the upper Dirichlet condition replaced by a Neumann-like boundary condition.
To implement the boundary condition, we introduce an \emph{``imaginary''} solid region
\[
\mathcal{I} :=\{ (x,y,z)\in \mathbb{R}^3\colon  z\geq cr \},
\]
which is placed strictly above and adjacent to the near-region.
The corresponding imaginary contact angle \(\theta_i\) is estimated from the previous iteration so that it matches the slope of the interface over the far-region \(C_{\mathrm{far}}\).
In practice, we consider the CMCF~\eqref{prob:channel} with \(\mathbf{k}=(1,0)\), where the interface propagates in the \(x\)-direction.
The traces of the interface on the upper boundary of \(C_{\mathrm{far}}\) and the lower boundary of \(C_{\mathrm{far}}\) are extracted on two fixed \(z\)-planes.
The average \(x\)-coordinate of each trace is then computed, and the resulting two representative points are used to estimate the interface slope, thereby determining the imaginary contact angle \(\theta_i\); see Figure~\ref{fig:zoomin_channel} (\textbf{top left}).

Formally, the dynamics is now given by CMCF with a fictitious contact angle condition on the upper boundary,
\begin{align}
\label{prob:near_channel}
    \begin{cases}
        V_n = \kappa, & \text{on } \Sigma_{LV},\\
\theta_{SLV}(\mathbf{x}) = \theta_Y, &\mathbf{x}\in \Gamma_{SLV},\\
        \theta_{\mathcal{I}LV}(\mathbf{x}) = \theta_i,  &\mathbf{x}\in \Gamma_{\mathcal{I}LV}.
\end{cases}
\end{align}

To simulate \eqref{prob:near_channel}, we apply an MBO diffusion-generated method, which is an efficient computational method for evolving interfaces by mean curvature flow; see \cite{MBO1993,Merriman_1994} and further developments in \cite{Esedoglu_2014,Wang_2019,Xu_2017}.
In particular, we adopt a variant developed in \cite{Wang_2019} without a volume constraint.
The method is described as follows.

\paragraph{MBO diffusion-generated method for  \eqref{prob:near_channel}}
Given Young's angle $\theta_Y$ and $\theta_i$, define three time-step lengths,
\(\tau_1>0\),
\begin{equation} \label{equ:tau23}
    \tau_2=\left(\frac{\pi\cos\theta_Y}{\pi-2\theta_Y}\right)^2 \tau_1 \qquad \text{and} \qquad \tau_3=\left(\frac{\pi\cos\theta_i}{\pi-2\theta_i}\right)^2 \tau_1.
\end{equation}
Consider the computational domain \(\Omega_{\mathrm{near}} := C_{\mathrm{near}}\cup S\cup \mathcal{I}\).
Starting from initial liquid and vapor regions, $L^0,V^0\subset C_{\mathrm{near}}$, the iterative scheme is as follows.
For each iteration $k$, set
\begin{equation}\label{equ:phi_in_MBO}
    \begin{aligned}
        \phi
=
\frac{1}{\sqrt{\tau_1}}
\,G_{\tau_1}\ast\bigl(\mathbf{1}_{V^k}-\mathbf{1}_{L^k}\bigr)
-
\frac{1}{\sqrt{\tau_2}}
\,G_{\tau_2}\ast\bigl(\cos\theta_Y\,\mathbf{1}_{S}\bigr)
-
\frac{1}{\sqrt{\tau_3}}
\,G_{\tau_3}\ast\bigl(\cos\theta_i\,\mathbf{1}_{
\mathcal{I}}\bigr),
    \end{aligned}
\end{equation}
where $\,G_{\tau}(\mathbf{x})=\frac{1}{(4\pi \tau)^{n/2}}\exp\left(-\frac{|\mathbf{x}|^2}{4\tau}\right)$ denotes the heat kernel with parameter $\tau$ and
$\mathbf{1} _{D}(\mathbf{x}) = \begin{cases} 1 & \mathbf{x}  \in D \\ 0 & \mathbf{x} \notin D \end{cases}$ is the indicator function of $D$.
The liquid and vapor regions are then updated by thresholding:
\begin{equation} \label{iter:threshold}
L^{k+1}
=
\{\mathbf{x}\in C_{\mathrm{near}} \colon \phi(\mathbf{x})<0\}
\qquad \textrm{and} \qquad
V^{k+1}
=
\{\mathbf{x}\in C_{\mathrm{near}}\colon \phi(\mathbf{x})\ge 0\}.
\end{equation}
Here, since we have no volume constraint, the threshold is chosen to be zero; see \ref{sec:Derivation_MBO} for a derivation.
The algorithm uses a two-level stopping criterion, described in Section~\ref{s:Imple}.
\vspace{2mm}

\paragraph{Analysis of the MBO diffusion-generated method}
This variant of the MBO diffusion-generated method can be  derived from iterative minimization of
the energy \(\mathcal{L}\),
where \(\mathcal{L}\) is a linearized approximation of the total interfacial energy \eqref{equ:interface_energy}.
This derivation is summarized as follows.

When $\tau\ll1$, the area of $\Sigma_{LV}$ can be approximated by
$
|\Sigma_{LV}| \approx \frac{\sqrt{\pi}}{\sqrt{\tau}} \int_{\Omega_{\mathrm{near}}} \mathbf{1}_{L} \,G_{\tau} \ast \mathbf{1}_{V} \, \,d\mathbf{x}$. Similarly, the interface energy~\eqref{equ:interface_energy} can be approximated by
\begin{equation}\label{equ:approx_interface_energy}
\begin{aligned}
&\mathcal{E}^{\tau} (\mathbf{1}_{L},\mathbf{1}_{V};\Omega_{\mathrm{near}}) \ = \
\frac{\sqrt{\pi}\gamma_{LV}}{\sqrt{\tau_1}} \int_{\Omega_{\mathrm{near}}} \mathbf{1}_{L} \,G_{\tau_1} \ast \mathbf{1}_{V} \,d\mathbf{x}
\ + \
\sum_{j=2,3}\frac{\sqrt{\pi}\gamma_{S_j L}}{\sqrt{\tau_j}} \int_{\Omega_{\mathrm{near}}} \mathbf{1}_{L} \,G_{\tau_j} \ast \mathbf{1}_{S_j} \,d\mathbf{x}
\ + \
\sum_{j=2,3}\frac{\sqrt{\pi}\gamma_{S_jV}}{\sqrt{\tau_j}} \int_{\Omega_{\mathrm{near}}} \mathbf{1}_{V} \,G_{\tau_j} \ast \mathbf{1}_{S_j} \,d\mathbf{x},
\end{aligned}
\end{equation}
where \(S_2=S\), \(S_3=\mathcal{I}\).
Small $\tau_1>0$ is used in convolution to approximate $\Sigma_{LV}$.
Similarly, $\tau_2$ is used to approximate $\Sigma_{SL}$ and $\Sigma_{SV}$ and $\tau_3$ is used to approximate $\Sigma_{\mathcal{I}L}$ and $\Sigma_{\mathcal{I}V}$.
Here the superscript \(\tau\) indicates the dependence of the energy, $\mathcal{E}^{\tau}$, on \(\tau_1\), \(\tau_2\) and \(\tau_3\).
The specific choice of \(\tau_2\) and \(\tau_3\) ensures that the correct Young's angle is obtained near the contact line in the MBO method; see~\cite[Section~3]{Wang_2019}.

Define the admissible set $\mathcal{B} =\left\{(u_1,u_2)\in BV(C_{\mathrm{near}}) \colon u_i(\mathbf{x}) \in \{0,1\} \text{ and } u_1(\mathbf{x})+u_2(\mathbf{x})=1,\text{ a.e. } \mathbf{x} \in C_{\mathrm{near}} \right\}$.
The interfacial energy~\eqref{equ:interface_energy} is approximated by $\mathcal{E}^{\tau} (u_1,u_2; \Omega_{\mathrm{near}})$ for ${(u_1,u_2)\in \mathcal{B}}$.
Further, we consider the convex hull $\mathcal{K} =\{(u_1,u_2)\in BV(C_{\mathrm{near}}) : u_i(\mathbf{x}) \in [0,1], u_1(\mathbf{x})+u_2(\mathbf{x})=1 \text{ a.e. } \mathbf{x} \in C_{\mathrm{near}}\}$ and consider the relaxed problem
\begin{align} \label{prob:relaxed_wetting}
\min_{(u_1,u_2)\in \mathcal{K}}\mathcal{E}^{\tau} (u_1,u_2; \Omega_{\mathrm{near}}).
\end{align}

\begin{lemma}\label{lem:char_min} We have
$$
\min_{(u_1,u_2)\in \mathcal{K}} \mathcal{E}^{\tau} (u_1,u_2; \Omega_{\mathrm{near}}) = \min_{(u_1,u_2)\in \mathcal{B}} \mathcal{E}^{\tau} (u_1,u_2; \Omega_{\mathrm{near}}).
$$
\end{lemma}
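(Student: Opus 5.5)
The approach is the standard one for threshold-dynamics energies (as in Esedoglu--Otto): show that $\mathcal{E}^{\tau}$, viewed as a function of the single variable $u:=u_1$ (with $u_2=1-u$), is concave on $\mathcal{K}$. Then its minimum over the convex set $\mathcal{K}$ is attained at extreme points, i.e.\ binary functions in $\mathcal{B}$. Since $\mathcal{B}\subset\mathcal{K}$, the inequality ``$\le$'' is trivial, so only ``$\ge$'' needs proof.

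First I substitute $u_2=1-u$ into \eqref{equ:approx_interface_energy}, integrating only over $C_{\mathrm{near}}$ because $u_1,u_2$ vanish on $S\cup\mathcal{I}$. The liquid--vapor term becomes
\[
\frac{\sqrt{\pi}\gamma_{LV}}{\sqrt{\tau_1}}\Big(\int u\, G_{\tau_1}\ast \mathbf{1}_{C_{\mathrm{near}}}\,d\mathbf{x}-\int u\, G_{\tau_1}\ast u\,d\mathbf{x}\Big).
\]
The solid terms are affine in $u$, since $G_{\tau_j}\ast\mathbf{1}_{S_j}$ is a fixed function. The only nonlinear part is therefore $-c\int u\,G_{\tau_1}\ast u$ with $c>0$. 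By Plancherel, $\int u\,G_{\tau}\ast u=\int \hat G_\tau|\hat u|^2\ge 0$ with $\hat G_\tau(\xi)=e^{-4\pi^2\tau|\xi|^2}>0$ (extending $u$ by zero to $\mathbb{R}^3$). Hence $u\mapsto\int u\,G\ast u$ is a convex quadratic form, and $\mathcal{E}^\tau$ is concave in $u$. This is also where one assumes $\gamma_{LV}>0$.

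Next I show that a minimizer over $\mathcal{K}$ can be pushed to $\{0,1\}$ values. Take any $u\in\mathcal{K}$. Concavity together with the linear term gives, for any $v$,
\[
\mathcal{E}^\tau(v)\le \mathcal{E}^\tau(u)+\int \delta\mathcal{E}^\tau(u)\,(v-u),
\]
where $\delta\mathcal{E}^\tau(u)$ is the first variation, computed explicitly from the previous step. Choose $v=\mathbf{1}_{\{\delta\mathcal{E}^\tau(u)<0\}}$, the thresholding step that produces \eqref{iter:threshold}. This choice minimizes the linear functional pointwise over $[0,1]$-valued functions, so $\int \delta\mathcal{E}^\tau(u)(v-u)\le0$ and hence $\mathcal{E}^\tau(v)\le\mathcal{E}^\tau(u)$ with $v$ binary. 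Taking $u$ to be a minimizer, or an infimizing sequence, gives $\inf_{\mathcal{B}}\le\inf_{\mathcal{K}}$.

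The main obstacle is regularity. The lemma is stated in $BV$, and I need existence of minimizers and $v\in BV$. The functional is continuous under weak-$*$ $L^\infty$ convergence, and $\mathcal{K}$ (as $L^\infty$ functions with values in $[0,1]$) is weak-$*$ compact, so a minimizer exists in $L^\infty$. The level set of the smooth function $\delta\mathcal{E}^\tau(u)$ has finite perimeter for a.e.\ threshold shift, so I would either perturb the threshold slightly or argue that the minima coincide when the problem is posed in $L^\infty$. I expect to handle this by noting that the energy is $L^1$-continuous and that $BV$ binary functions are dense among binary functions in $L^1$. That density gives equality of the infima, and the minimum in $\mathcal{B}$ is then attained by the thresholded $v$.
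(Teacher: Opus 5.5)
Your main argument is sound, and its key ingredient is the one behind the result the paper cites, \cite[Lemma~2.1]{Wang_2019}; the paper gives no proof of its own. After substituting $u_2=\mathbf{1}_{C_{\mathrm{near}}}-u_1$, the only non-affine term of $\mathcal{E}^{\tau}$ is $-\frac{\sqrt{\pi}\gamma_{LV}}{\sqrt{\tau_1}}\int u_1\,G_{\tau_1}\ast u_1\,d\mathbf{x}$, which is concave since $\widehat{G_{\tau_1}}>0$. To pass to $\{0,1\}$-valued functions you use the supporting-hyperplane inequality with $v=\mathbf{1}_{\{\phi<0\}}$, where $\delta\mathcal{E}^{\tau}(u)=\sqrt{\pi}\gamma_{LV}\,\phi$ and $\phi$ is the MBO threshold function built from $u$. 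This is a constructive replacement for an abstract extreme-point argument, and it is the same computation the paper uses in the appendix for energy decay of the MBO step. It correctly gives, for every $u\in\mathcal{K}$, a binary $v$ with $\mathcal{E}^{\tau}(v)\le\mathcal{E}^{\tau}(u)$.

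The gap is in your last paragraph, and it concerns the statement's claim that both minima are \emph{attained}. None of your three tools delivers attainment in $BV$:
\begin{enumerate}
\item Weak-$*$ compactness gives a minimizer $u^\ast$ among $[0,1]$-valued $L^\infty$ functions, not necessarily in $BV$, because $\mathcal{E}^{\tau}$ does not control total variation.
\item Density of $BV$ indicators together with $L^1$-continuity only yields $\inf_{\mathcal{B}}\mathcal{E}^{\tau}=\inf_{\mathcal{K}}\mathcal{E}^{\tau}$.
\item Moving the threshold to a level $t\neq 0$ chosen by the coarea formula adds the nonnegative error $\sqrt{\pi}\gamma_{LV}\int_{\{0\le\phi<t\}}\phi\,d\mathbf{x}$ (or its analogue for $t<0$), so it yields only almost-minimizers.
\end{enumerate}
Your final claim that the thresholded $v$ attains the minimum in $\mathcal{B}$ therefore requires $\mathbf{1}_{\{\phi<0\}}\in BV(C_{\mathrm{near}})$, which you never establish.

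Two facts close this. First, for an $L^\infty$ minimizer $u^\ast$ with thresholding $v$ there is the exact identity
\[
\mathcal{E}^{\tau}(v)=\mathcal{E}^{\tau}(u^\ast)+\int\delta\mathcal{E}^{\tau}(u^\ast)\,(v-u^\ast)\,d\mathbf{x}-\frac{\sqrt{\pi}\gamma_{LV}}{\sqrt{\tau_1}}\int (v-u^\ast)\,G_{\tau_1}\ast(v-u^\ast)\,d\mathbf{x}.
\]
Minimality forces the last integral to vanish, and strict positivity of $\widehat{G_{\tau_1}}$ then gives $u^\ast=v$ a.e. So every relaxed minimizer is already binary. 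Second, $\phi$ (built from $u^\ast$) is a finite sum of Gaussian convolutions of bounded functions, hence real-analytic on $\mathbb{R}^3$. Either $\phi\equiv 0$, so $v\equiv 0$, or $\{\phi=0\}$ has locally finite $\mathcal{H}^{2}$-measure. Since $\partial\{\phi<0\}\subset\{\phi=0\}$, the set $\{\phi<0\}$ then has finite perimeter in the bounded computational box. Hence $u^\ast\in\mathcal{B}\subset\mathcal{K}$ realizes both minima. If you prefer to avoid the analyticity step, state the lemma with infima (or pose $\mathcal{K}$ and $\mathcal{B}$ in $L^\infty$); your argument already proves that version.
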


A proof of Lemma~\ref{lem:char_min} under the volume constraint is given in \cite[Lemma~2.1]{Wang_2019} and is easily adapted to our setting.
For $(u_1,u_2)\in\mathcal{K}$, the expansion of the energy $\mathcal{E}^{\tau}$ in \eqref{equ:approx_interface_energy} about \((u_1^k,u_2^k)\in \mathcal{K}\) is given by
\begin{align}
\label{equ:expansion_interface_energy}
\mathcal{E}^{\tau} (u_1,u_2;\Omega_{\mathrm{near}})
\ =  \
\mathcal{E}^{\tau} (u^k_1,u^k_2;\Omega_{\mathrm{near}})
\ + \
{\mathcal{L}}(u_1-u^k_1,u_2-u^k_2, u^k_1,u^k_2;\Omega_{\mathrm{near}})
\ + \
H(u_1-u^k_1,u_2-u^k_2;\Omega_{\mathrm{near}})
\end{align}
where
{\small
\begin{equation}\label{equ:linearied_interface_energy}
 \begin{aligned}
&{\mathcal{L}}(v_1,v_2, u^k_1,u^k_2; \Omega_{\mathrm{near}}) :=\sqrt{\pi}\int_{\Omega_{\mathrm{near}}} v_1\left(\frac{\gamma_{LV}}{\sqrt{\tau_1}} \,G_{\tau_1} \ast u^k_2 +\sum_{j=2,3}\frac{\gamma_{S_j L}}{\sqrt{\tau_j}}\,G_{\tau_j}\ast \mathbf{1}_{S_j}\right) \, \,d\mathbf{x}
\, + \,
\sqrt{\pi}\int_{\Omega_{\mathrm{near}}} v_2 \left(\frac{\gamma_{LV}}{\sqrt{\tau_1}}\,G_{\tau_1} \ast u^k_1 +\sum_{j=2,3}\frac{\gamma_{S_jV}}{\sqrt{\tau_j}} \,G_{\tau_j} \ast\mathbf{1}_{S_j}\right) \, \,d\mathbf{x},
\end{aligned}
\end{equation}}
and
\begin{align*}
    H(v_1,v_2; \Omega_{\mathrm{near}})&:= \frac{-\sqrt{\pi}\gamma_{LV}}{2\sqrt{\tau_1}} \int_{\Omega_{\mathrm{near}}} \left( v_1 \,G_{\tau_1}\ast v_1 + v_2 \,G_{\tau_1}\ast v_2 \right) \,d\mathbf{x}
    \, \, \leq  \, \, 0.
\end{align*}
In \cite{Wang_2019}, it was shown that the minimizer of \eqref{equ:linearied_interface_energy} under a volume constraint decreases the approximate energy \eqref{equ:approx_interface_energy} monotonically.
By a similar approach, we are able to prove the same energy-decreasing property, namely:

\begin{theorem} \label{thm:stablity_wetMBO}
Denote by $(u^k_1,u^k_2) = (\mathbf{1}_{L^k},\mathbf{1}_{V^k})$,
$k=0,1,2,\ldots$
the indicator functions obtained from the MBO diffusion-generated method for the CMCF~\eqref{prob:near_channel}. We have the energy-decreasing property
    $$\mathcal{E}^{\tau}(u^{k+1}_1,u^{k+1}_2; \Omega_{\mathrm{near}})\leq \mathcal{E}^{\tau}(u^k_1,u^k_2; \Omega_{\mathrm{near}}),$$ for $\tau_1>0$.
\end{theorem}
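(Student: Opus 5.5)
The plan is the standard minimizing-movement argument of \cite{Wang_2019}, with the volume constraint removed. It has three parts. First, the thresholding step \eqref{iter:threshold} minimizes the linearization $\mathcal{L}(\cdot,\cdot,u^k_1,u^k_2;\Omega_{\mathrm{near}})$ over $\mathcal{K}$. Second, the expansion \eqref{equ:expansion_interface_energy} gives $\mathcal{E}^\tau$ as this linear part plus a remainder $H$. Third, $H\le 0$, so $\mathcal{E}^\tau$ is concave on $\mathcal{K}$ and lies below its tangent plane at $(u^k_1,u^k_2)$.

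\textbf{Step 1 (the update minimizes $\mathcal{L}$).} Since $v_2=-v_1$ for admissible variations $u-u^k$ in $\mathcal{K}$, the functional $\mathcal{L}$ is affine in $u_1$ with pointwise coefficient
\[
\sqrt{\pi}\Bigl(\tfrac{\gamma_{LV}}{\sqrt{\tau_1}}G_{\tau_1}\ast(u_2^k-u_1^k)+\textstyle\sum_{j}\tfrac{\gamma_{S_jL}-\gamma_{S_jV}}{\sqrt{\tau_j}}G_{\tau_j}\ast \mathbf{1}_{S_j}\Bigr).
\]
I will normalize $\gamma_{LV}=1$ and use $\gamma_{SL}-\gamma_{SV}=-\cos\theta_Y$ and $\gamma_{\mathcal{I}L}-\gamma_{\mathcal{I}V}=-\cos\theta_i$. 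With these, the coefficient equals $\sqrt{\pi}\,\phi$, where $\phi$ is defined in \eqref{equ:phi_in_MBO}. A linear functional $\int u_1\phi$ over $u_1\in[0,1]$ is minimized pointwise by $u_1=\mathbf{1}_{\{\phi<0\}}$. This is exactly $L^{k+1}$. It lies in $\mathcal{B}\subset\mathcal{K}$, and membership in $BV$ is automatic on the discrete grid. Hence
\[
\mathcal{L}(u^{k+1}-u^k,u^k)\le \mathcal{L}(0,0,u^k)=0 .
\]
Because no volume constraint is imposed, there is no Lagrange multiplier and the threshold is $0$. This step is simpler than in \cite{Wang_2019}.

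\textbf{Step 2 (the expansion is exact).} I will check \eqref{equ:expansion_interface_energy} by expanding the bilinear term $\int u_1\,G_{\tau_1}\ast u_2$ around $u^k$, using the symmetry of $G_\tau$. The solid terms are linear in $u$, so they contribute no remainder. The quadratic remainder is $\int v_1\,G_{\tau_1}\ast v_2$. With $v_2=-v_1$ this equals $-\tfrac12\int(v_1G\ast v_1+v_2G\ast v_2)$, which is the stated $H$.

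\textbf{Step 3 ($H\le0$).} This is the step that carries the argument. By Plancherel,
\[
\int v\,G_\tau\ast v=\int|\hat v|^2e^{-4\pi^2\tau|\xi|^2}\,d\xi\ge0 ,
\]
because the heat kernel is positive definite. I expect the only real obstacle here: the integrals are taken over $\Omega_{\mathrm{near}}$ while the convolution lives on all of space. I will handle this by extending $v$ by zero outside $C_{\mathrm{near}}$, which is legitimate since both $u$ and $u^k$ are supported in $C_{\mathrm{near}}$. In the periodic FFT setting I will instead note that the Fourier multiplier of the periodized kernel is still positive. Therefore $H\le0$.

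Combining the three steps gives
\[
\mathcal{E}^\tau(u^{k+1})=\mathcal{E}^\tau(u^k)+\mathcal{L}(u^{k+1}-u^k,u^k)+H\le\mathcal{E}^\tau(u^k),
\]
valid for every $\tau_1>0$ (with $\tau_2,\tau_3$ determined by \eqref{equ:tau23}).
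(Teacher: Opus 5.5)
Your proposal is correct and follows essentially the paper's route: the thresholding step minimizes the linearization $\mathcal{L}$ (the paper's Lemma~\ref{lem:iter_sol}), and the energy drop is then bounded below by $\frac{\sqrt{\pi}\gamma_{LV}}{\sqrt{\tau_1}}\int (u_1^{k+1}-u_1^k)\,G_{\tau_1}\ast(u_1^{k+1}-u_1^k)\,d\mathbf{x}$, which is exactly $-H$ in your expansion. Your Plancherel argument, with the zero extension outside $C_{\mathrm{near}}$, justifies the nonnegativity of this term, which the paper asserts without proof.
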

A proof of Theorem~\ref{thm:stablity_wetMBO} is provided in \ref{proof:stability_analysis}; here we sketch the idea.
Lemma~\ref{lem:iter_sol} in \ref{sec:Derivation_MBO}  gives that
$\mathcal{L}(u^{k}_1,u^{k}_2,u^{k}_1,u^{k}_2; \Omega_{\mathrm{near}})\geq \mathcal{L}(u^{k+1}_1,u^{k+1}_2,u^{k}_1,u^{k}_2; \Omega_{\mathrm{near}})$.
This gives a lower bound for the energy difference \(\mathcal{E}^{\tau}(u^k_1,u^k_2; \Omega_{\mathrm{near}})-\mathcal{E}^{\tau}(u^{k+1}_1,u^{k+1}_2; \Omega_{\mathrm{near}})\), which is nonnegative under the constraint $u^{k+1}_1+u^{k+1}_2=u^{k}_1+u^{k}_2=\mathbf{1}_{C_{\mathrm{near}}}$.

With the energy formulation \(\mathcal{E}^\tau\), we are now ready to introduce Algorithm~\ref{alg:TSA}.
In consideration of the interface in the global domain \(\bar{\Omega} =C_{\mathrm{near}}\cup C_{\mathrm{far}} \cup S\), we define relaxed admissible set \(\bar{\mathcal{K}}\) by
\[
\bar{\mathcal{K}} =\{(u_1,u_2)\in BV(C_{\mathrm{near}}\cup C_{\mathrm{far}}) : u_i(\mathbf{x}) \in [0,1], u_1(\mathbf{x})+u_2(\mathbf{x})=1 \text{ a.e. } \mathbf{x} \in C_{\mathrm{near}}\cup C_{\mathrm{far}}\}.
\]
For $(v_1,v_2)\in\bar{\mathcal{K}}$, we abbreviate the two constraint sets that appear in the alternating iteration by
\begin{align*}
\bar{\mathcal{K}}_{\mathrm{near}}(v_1,v_2)
&:=\{(u_1,u_2)\in \bar{\mathcal{K}} : (u_1,u_2)=(v_1,v_2) \text{ on } C_{\mathrm{far}}\setminus C_{\mathrm{near}}\}, \\
\bar{\mathcal{K}}_{\mathrm{far}}(v_1,v_2)
&:=\{(u_1,u_2)\in \bar{\mathcal{K}} : (u_1,u_2)=(v_1,v_2) \text{ on } C_{\mathrm{near}}\setminus C_{\mathrm{far}}\}.
\end{align*}
Also, we emphasize that the energy \(\mathcal{E}\) in Step~\ref{iter:step2_TSA} refers to the global interfacial energy~\eqref{equ:interface_energy} over \(\bar{\Omega}\).
Step~\ref{iter:step2_TSA} is implemented by constructing an approximate minimal surface, as described in the next section.

\begin{algorithm}[t]
\caption{Two-scale alternating (TSA) method.}
\label{alg:TSA}
\begin{algorithmic}[1]
\Require Young's angle $\theta_Y$; base time step $\tau_1>0$, with $\tau_2$ as in
  \eqref{equ:tau23}; tolerance $\mathrm{tol}>0$; initial configuration
  $(u_1^0,u_2^0)\in\bar{\mathcal{K}}$.
\For{$n=0,1,2,\ldots$}
  \State Update the imaginary angle $\theta_i^{n}$ from the traces of
    $(u_1^{2n},u_2^{2n})$, as in Section~\ref{sec:wettingMBO}, and set $\tau_3$
    by \eqref{equ:tau23}.\label{iter:step0_TSA}
  \State Near-region update (MBO):\label{iter:step1_TSA}
  \Statex \qquad $\displaystyle
    (u_1^{2n+1},u_2^{2n+1}) =
    \arg\min_{(u_1,u_2)\in\bar{\mathcal{K}}_{\mathrm{near}}(u_1^{2n},u_2^{2n})}
    \mathcal{L}\bigl(u_1-u_1^{2n},u_2-u_2^{2n},u_1^{2n},u_2^{2n};\Omega_{\mathrm{near}}\bigr)$
  \State Far-region update (minimal surface):\label{iter:step2_TSA}
  \Statex \qquad $\displaystyle
    (u_1^{2n+2},u_2^{2n+2}) =
    \arg\min_{(u_1,u_2)\in\bar{\mathcal{K}}_{\mathrm{far}}(u_1^{2n+1},u_2^{2n+1})}
    \mathcal{E}(u_1,u_2;\bar{\Omega})$
\State \algorithmicif\ $\bigl\|u_1^{2n+2}-u_1^{2n}\bigr\|_{L^1(C_{\mathrm{near}})}<\mathrm{tol}$ \algorithmicthen\ \textbf{break}
\EndFor
\Ensure Approximate stationary configuration $(u_1^{2n+2},u_2^{2n+2})$ for the
  time steps $(\tau_1,\tau_2,\tau_3)$.
\end{algorithmic}
\end{algorithm}

In the numerical implementation, this inner iteration is repeated within an outer time-step refinement procedure, as described in Section~\ref{sec:TSA_overview}.

Although $\theta_i^{n}$ does not appear explicitly in $\mathcal{L}$, it enters
Step~\ref{iter:step1_TSA} through the time step $\tau_3$ set in
Step~\ref{iter:step0_TSA}, which weights the imaginary solid $\mathcal{I}$ in
\eqref{equ:linearied_interface_energy}.
In Step~\ref{iter:step1_TSA}, all arguments of \(\mathcal{L}\) are understood to be restricted to \(C_{\mathrm{near}}\).

\subsection{Far-region: minimal surface approximation} \label{sec:condition_over_far}

In the far-region, \(C_{\mathrm{far}}\), the interface geometry is relatively smooth.
Therefore, instead of evolving according to CMCF~\eqref{prob:channel}, we use the corresponding stationary state to update the interface in the far-region at each iteration.

The stationary far-region interface is a minimal surface, with a Dirichlet boundary condition on the lower boundary of \(\partial C_{\mathrm{far}}\) provided by the updated near-region interface \(\Sigma_{LV}^k\) from the previous iteration.
For notational simplicity, we consider the stationary solution when
\( \mathbf{k}= \left(\begin{smallmatrix} 1  \\ 0 \end{smallmatrix} \right) \). Formally,
\begin{align*}
    \begin{cases}
        \kappa=0, & \text{on } \Sigma_{LV},\\[1mm]
        \Sigma_{LV} = \Sigma_{LV}^{k}, & \text{on } \partial C_{\mathrm{far}},\\
        L \cap \{z=R\} =  \left\{(x,y,R) \colon
x \leq 0 \right\}.
    \end{cases}
\end{align*}
This construction provides a smooth macroscopic interface that connects naturally to the finely resolved near-region interface. This is still a nonlinear PDE. Because the far-region interface remains close to a plane, we linearize the minimal-surface equation about its mean slope.

We first introduce the graphical representation. If a surface area minimizer in $\{z \geq 0\}$ is sandwiched between two parallel planes, as in \eqref{equ:flat_like},  then for $C$ depending only on the flatness hypothesis, it is a smooth subgraph in $\{z \geq C\}$. Further if the surface is periodic in the $y$-variable, then it asymptotes smoothly, and with exponential rate, to a plane with angle \(\theta\) as $z \to \infty$. See Proposition~\ref{prop:regularity-of-planelikes} in~\ref{sec:Justification_linearization} for details.
Motivated by this, we assume that the liquid–vapor interface $\Sigma_{LV}$ is given by the graph $ x(y,z)$
away from the contact line, defined over the domain
$\{(y,z)\colon  y \in [-l_y, l_y],\ z \in [r, R]\}$
or
$\{(y,z)\colon y \in [-l_y, l_y],\ z \in [r, \infty)\}$.
In these coordinates, the \emph{graphical minimal surface equation} is then
\begin{equation} \label{equ:min_upper_interface}
\left\{
\begin{aligned}
&\left(1 + x_{y}^{2} \right)\,x_{zz} - 2x_{y}x_{z}x_{yz} + \left(1 + x_{z}^{2}\right)\,x_{yy} = 0, \\
&x(y,r) = f(y),
&&x(y,R) = 0,\\
&x(-l_y,z) = x(l_y,z),
&&x_y(-l_y,z) = x_y(l_y,z),
 \end{aligned}
 \right.
\end{equation}
where \(f=\Sigma_{LV}^{k}|_{\partial C_{\mathrm{far}}}\). There are two scales in system~\eqref{equ:min_upper_interface}, a large \(R\gg 1\) and a relatively small amplitude \(\eta\), defined in Theorem~\ref{thm:aprox_channel_min_upper_interface}. Specifically, we may assume that \(x(y,z)\) differs only slightly from a flat plane:
\[
\|x(y,z)-m(z-R)\|_{L^\infty}\leq C\eta
\]
with some constants \(m\) and \(C,
\eta>0\). Therefore, we may consider the following expansion and apply the corresponding asymptotic analysis.

\begin{theorem}
\label{thm:aprox_channel_min_upper_interface}
Consider the expansion of the liquid–vapor interface \(x(y,z)\):
\[x(y,z) =m(z-R)+ u(y,z)\eta+o(\eta),\]
where
\begin{equation*}
    m= \frac{1}{(r-R)} \dashint_{-l_y}^{l_y} f(y)\,dy \quad \text{ and }\quad  \eta = \|f-m(r-R)\|_{\infty}.
\end{equation*}
Under this expansion, system~\eqref{equ:min_upper_interface} is approximated at leading order \(O(\eta)\) by
\begin{equation}
\label{equ:aprox_channel_min_upper_interface}
\left\{
\begin{aligned}
&u_{zz}+\left(1+m^2\right)u_{yy}=0, \\
&u(y,r) = \frac{1}{\eta} \left(f(y)-m(r-R) \right),
&& u(y,R) = 0, \\
&u(-l_y,z) = u(l_y,z),
&& u_y(-l_y,z) = u_y(l_y,z).
\end{aligned}
\right.
\end{equation}
This linear problem admits the following Fourier series solution

\begin{align*}
    u(y,z)&=\frac{1}{\eta}\sum_{n=1}^{\infty} \left[ A_n\cos\left(\pi n\frac{y+l_y}{l_y}\right)
    +B_n\sin\left( \pi n\frac{y+l_y}{l_y} \right) \right]\frac{1}{\sinh\left(\frac{2\pi nh}{l} \right)}\sinh\left(\pi n \sqrt{1+m^2}\frac{R-z}{l_y}\right),
\end{align*}
where \(h=R-r\), \(l=\frac{2 l_y}{\sqrt{1+m^2}}\), \(g(y)=f(\sqrt{1+m^2}y-l_y)-m(r-R)\) and the Fourier coefficients are given by
\begin{align*}
A_n=\frac{2}{l}
\int_0^l g(y)\cos\left(\frac{2\pi n y}{l}\right) \, dy
\qquad \textrm{and} \qquad
B_n=\frac{2}{l}
\int_0^l g(y)\sin\left(\frac{2\pi n y}{l} \right)\, dy.
\end{align*}
Finally, the liquid–vapor interface is given by
\begin{align}\nonumber
    x(y,z) &=m(z-R)+\eta u(y,z)\\
    \label{equ:far_region_sol}
    &=m(z-R)+\sum_{n=1}^{\infty} \left[A_n\cos\left(\pi n\frac{y+l_y}{l_y} \right)
    +B_n\sin \left(\pi n\frac{y+l_y}{l_y}\right) \right] \frac{1}{\sinh\left(\frac{2\pi nh}{l} \right)}
    \sinh\left(\pi n \sqrt{1+m^2}\,\frac{R-z}{l_y}\right).
\end{align}
\end{theorem}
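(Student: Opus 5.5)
The plan is a formal perturbation argument followed by separation of variables.

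\textbf{Step 1: linearization.} Substitute $x(y,z)=m(z-R)+\eta u(y,z)+o(\eta)$ into the graphical minimal surface equation~\eqref{equ:min_upper_interface}. The derivatives are
\[
x_y=\eta u_y,\qquad x_z=m+\eta u_z,\qquad x_{yy}=\eta u_{yy},\qquad x_{zz}=\eta u_{zz},\qquad x_{yz}=\eta u_{yz}.
\]
The cross term $-2x_yx_zx_{yz}=-2\eta^2 u_y(m+\eta u_z)u_{yz}$ is $O(\eta^2)$. The coefficient $1+x_y^2$ equals $1+O(\eta^2)$, and $1+x_z^2=1+m^2+O(\eta)$. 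Dividing by $\eta$ and keeping the $O(1)$ terms gives
\[
u_{zz}+(1+m^2)u_{yy}=0 .
\]
The boundary conditions pass through term by term:
\begin{itemize}
\item $x(y,r)=f(y)$ gives $m(r-R)+\eta u(y,r)=f(y)$, which is the stated Dirichlet datum for $u$;
\item $x(y,R)=0$ gives $u(y,R)=0$;
\item periodicity in $y$ of $x$ and $x_y$ transfers directly to $u$ and $u_y$, since the planar part $m(z-R)$ is independent of $y$.
\end{itemize}
The choice of $m$ as the mean of $f/(r-R)$ makes the datum $u(\cdot,r)$ mean-zero. This is what removes the zero Fourier mode: a nonzero zero mode would otherwise be a linear-in-$z$ correction to the slope. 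The choice of $\eta$ as the sup-norm of the datum makes $u(\cdot,r)$ of unit size, so $u$ is an honest $O(1)$ profile.

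\textbf{Step 2: separation of variables.} Solve~\eqref{equ:aprox_channel_min_upper_interface} using the $2l_y$-periodic basis $\cos(\pi n(y+l_y)/l_y)$ and $\sin(\pi n(y+l_y)/l_y)$ for $n\ge1$. For each mode, the $z$-profile $Z_n$ satisfies
\[
Z_n''=(1+m^2)\Bigl(\frac{\pi n}{l_y}\Bigr)^2 Z_n,\qquad Z_n(R)=0 ,
\]
which forces $Z_n(z)\propto\sinh\bigl(\pi n\sqrt{1+m^2}\,(R-z)/l_y\bigr)$. Normalize $Z_n$ to equal $1$ at $z=r$. Since $l=2l_y/\sqrt{1+m^2}$, the normalizing factor is
\[
\sinh\bigl(\pi n\sqrt{1+m^2}\,h/l_y\bigr)=\sinh(2\pi n h/l).
\]

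\textbf{Step 3: matching the coefficients.} The coefficients are the Fourier coefficients of $f-m(r-R)$ on $[-l_y,l_y]$, computed as $\frac1{l_y}\int_{-l_y}^{l_y}(\cdot)\cos(\pi n(y+l_y)/l_y)\,dy$. Change variables by $y=\sqrt{1+m^2}\,s-l_y$ with $s\in[0,l]$. This maps $[0,l]$ onto $[-l_y,l_y]$ and turns $\cos(\pi n(y+l_y)/l_y)$ into $\cos(2\pi ns/l)$. The prefactor becomes $\sqrt{1+m^2}/l_y=2/l$, which yields the stated $A_n$ and $B_n$ in terms of $g$; the sine coefficients work the same way. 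The factor $1/\eta$ in $u$ appears because $g$ is $\eta$ times the datum of $u$. Multiplying by $\eta$ then gives~\eqref{equ:far_region_sol} with no $\eta$ left in the formula.

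\textbf{Main obstacle.} I expect the delicate part to be the status of the $o(\eta)$ remainder, since the linearization is formal. To address it, I would note that for bounded (e.g.\ $L^2$) data the series converges and is smooth for $z>r$, thanks to the exponential factor $\sinh(\cdot(R-z))/\sinh(\cdot h)$. The neglected terms are quadratic in $\eta$ times derivatives of $u$, which are controlled in the interior. This makes the leading-order approximation consistent at order $O(\eta)$, which is all the theorem claims.
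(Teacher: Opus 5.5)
Your proposal is correct and follows essentially the same route as the paper: substitute the ansatz, match the $O(\eta)$ terms to get $u_{zz}+(1+m^2)u_{yy}=0$, transfer the boundary conditions, and solve by a Fourier expansion of the lower-boundary data. You supply details the paper leaves implicit, all of which check out. These are the mean-zero property of the datum that removes the $n=0$ mode, the normalization $\sinh(\pi n\sqrt{1+m^2}\,h/l_y)=\sinh(2\pi n h/l)$, and the substitution $y=\sqrt{1+m^2}\,s-l_y$ that produces the stated $A_n$, $B_n$ in terms of $g$.
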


\begin{proof}
Following the ansatz that $x(y,z)=m(z-R)+\eta u(y,z)+o(\eta)$, we rewrite $$(1 + x_{y}^{2})x_{zz} - 2x_{y}x_{z}x_{yz} + (1 + x_{z}^{2})x_{yy} = 0$$ as
$$
\left(1+\eta^2 u_y^2\right)\eta u_{zz}
\ - \
2\eta^2 u_y \left(m+\eta u_z\right) u_{yz}
\ + \
\left(1+(m+\eta u_{z})^2\right) \eta u_{yy}
\ = \ o(\eta).
$$
Matching the $O(\eta)$ term yields $u_{zz}+(1+m^2)u_{yy}=0.$ Therefore, system~\eqref{equ:min_upper_interface} is approximated by the linear system~\eqref{equ:aprox_channel_min_upper_interface} after rewriting all boundary conditions accordingly.
Applying the periodic boundary conditions and expanding the lower-boundary data in Fourier modes yields the stated solution.
\end{proof}

\begin{remark} \label{rem:exp_decay}
In practice, \(x(y,z)\) is approximated by truncating the series at
\(n=N_{\mathrm{F}}\) for some \(N_{\mathrm{F}}\in\mathbb{N}\).
After combining the \(\sinh\) factor in
\eqref{equ:far_region_sol} with the denominator in the Fourier
coefficients, the \(n\)th Fourier mode is exponentially damped at the rate
\(
\exp\left(
-2\pi n\frac{z-r}{l}
\right)
\)
for each fixed \(z>r\). The truncation parameter \(N_{\mathrm{F}}\) is
chosen accordingly.
\end{remark}

\subsection{Energy-slope relation of the macroscopic interface}\label{sec:energy_to_slope}
In the channel setting~\eqref{prob:channel}, the liquid–vapor interface is expected to remain close to a planar interface, up to an \(O(e^{-cz})\) correction induced by the surface roughness; see Figure~\ref{fig:energy_slope}.
This perturbation decays exponentially in the \(z\)-direction; see \ref{sec:Justification_linearization} for justification in the elliptic setting.
We expect similar behavior in this parabolic setting when the initial data are suitably chosen.
Accordingly, the total interfacial energy \(\mathcal{E}\) may be decomposed into two parts: the leading-order \(O(R)\) contribution
\(F\) from the planar interface and an \(O(1)\) correction induced by the microscopic roughness.
In this subsection, following this energy asymptotic computation, we provide a heuristic interpretation of the relation between the interfacial energy and the macroscopic slope.

\begin{figure}[t]
    \centering
    \begin{tikzpicture}[x={(1cm,0cm)}, y={(0.4cm,0.6cm)}, z={(0cm,1cm)}]
\begin{scope}[xshift=0cm]
% rough surfavce
\coordinate (A) at (-2,-1,0);
\coordinate (B) at (3,-1,0);
\coordinate (C) at (-2,1,0);
\coordinate (D) at (3,1,0);
% interfacve
\coordinate (E) at (-1,-1,2);
\coordinate (F) at (-1,1,2);
\coordinate (G) at (1,-1,0);
\coordinate (H) at (1,1,0);
% upper transition trace
\coordinate (I) at (0.5,-1,0.5);
\coordinate (J) at (0.5,1,0.5);
% upper floor
\coordinate (K) at (-2,-1,2);
\coordinate (L) at (3,-1,2);
\coordinate (M) at (-2,1,2);
\coordinate (N) at (3,1,2);

\draw[black,thick] (A)--(B) (C)--(D);
\draw[black,thick] (K)--(L) (M)--(N);
\draw[blue,thick] (E)--(F) (E)--(G) (F)--(H);
\draw[blue,thick,decorate,decoration={snake,amplitude=0.7mm,segment length=3mm}] (G) --(H);
%\draw[blue,thick,dashed,decorate,decoration={snake,amplitude=0.5mm,segment length=3mm}] (I) --(J);
\draw[red,thick,shorten >=-3mm,shorten <=-3mm] (H)--(G);
\node at (-2,0,2) {L};
\node at (1,0,2) {V};
\node at (-1,0,-1) {S};
\node at ($(F)+(0,0.5,0)$) {$z=R$};
\node at (1,-1,-0.4) {$x=X$};
\end{scope}
\end{tikzpicture}
    \caption{Dependence of interfacial energy on macroscopic slope}
    \label{fig:energy_slope}
\end{figure}

Consider the channel setting shown in Figure~\ref{fig:energy_slope}, where the \(y\)-periodic cell has unit length for simplicity.
We denote \(X\) as the averaged \(x\)-position of the contact line and define the macroscopic slope
\(
m=-\frac{X}{R},
\)
similarly to the definition in Section~\ref{sec:condition_over_far}.
Up to a constant difference, the leading-order energy \(F\) can be written as
\[
F = \gamma_{LV}\sqrt{R^2+X^2} + (\bar{\gamma}_{SL}-\bar{\gamma}_{SV})X,
\]
where \(\bar{\gamma}_{SL}\) and \(\bar{\gamma}_{SV}\) denote effective energy densities at the macroscopic scale.
These quantities generally depend on the microscopic roughness and wetting configuration, see \cite{Alberti_2005,Feldman_2018} for more details.
Their detailed characterization is not needed for the following argument, and we treat them as constants for simplicity.
We denote \(\bar{\theta}_Y\) as the effective Young's angle satisfying \(\cos(\bar{\theta}_Y)=\frac{\bar{\gamma}_{SV}-\bar{\gamma}_{SL}}{\gamma_{LV}}\), which is conjecturally always inside the pinning interval.
Using the definition of \(m\), the dependence of \(F\) on \(m\) can be written as
\[
F(m)=\gamma_{LV}\Bigl(\sqrt{1+m^2}+\cos(\bar{\theta}_Y)m\Bigr)R.
\]

We note that \(F(m)\) has a unique minimizer at \(m=-\cot\bar{\theta}_Y\), and is strictly decreasing on \((-\infty,-\cot\bar{\theta}_Y]\) and strictly increasing on \([-\cot\bar{\theta}_Y,\infty)\).
Therefore, if the macroscopic slope \(m\) remains on one of the two monotone branches, minimizing the leading-order \(O(R)\) contribution to the interfacial energy drives \(m\) monotonically along that branch until the contact line pins.
The value at which \(m\) halts then determines the apparent contact angle of the maximal or minimal stationary state.
Consequently, convergence of \(m\) in both the advancing and receding processes implies convergence of the corresponding apparent contact angles and hence of the hysteresis interval.
The remaining \(O(1)\) contribution induced by the microscopic roughness may cause small oscillations around this overall trend.

This observation motivates us to study the energy structure underlying the TSA method.
Since the macroscopic slope \(m\) is the quantity that determines the apparent contact angle and hence the hysteresis interval, it is desirable to formulate the alternating procedure with respect to a consistent energy functional.
In the following, we introduce a reference formulation of the TSA method based on a single interfacial energy.

\subsection{Reference formulation of the two-scale alternating (TSA) method} \label{sec:energy_formulation}

In the TSA method (Algorithm~\ref{alg:TSA}), the iteration alternates between minimizing two different energies: the localized energy \(\mathcal{E}^\tau\), associated with interfaces in \(\Omega_{\mathrm{near}}\), and the global interfacial energy \(\mathcal{E}\), associated with interfaces in \(\bar{\Omega}=C_{\mathrm{near}}\cup C_{\mathrm{far}}\cup S\).
This distinction is natural from the implementation point of view, as the two energies correspond to the respective updates performed in the near- and far-regions.
However, because the two updates minimize different energies, an energy principle for the complete iteration is not immediate.

In this subsection, we link the two energies via an intermediate approximation. We therefore introduce a reformulation of the TSA iteration that admits an energy analysis.

\paragraph{Heuristic model in 2D}
We first consider a heuristic model: a two-dimensional CMCF in a finite channel of height \(R\gg1\), where the liquid–vapor interface \(\Sigma_{LV}\) is simply a line with interfacial energy \(\mathcal{E}\) defined in \eqref{equ:interface_energy}; see Figure~\ref{fig:heuristic} (\textbf{left}).

The interface evolves in the entire channel, under the assumption that \(\Sigma_{LV}\) remains nearly straight above a positive height \(r\sim 1\), as motivated by the asymptotic analysis
of \ref{sec:Justification_linearization}.
Under this assumption, we introduce an imaginary solid region \(\mathcal{I}\) above \(\{z=r\}\) with constant energy densities \(\gamma_{\mathcal{I}L},\gamma_{\mathcal{I}V}\) such that \(\cos(\theta_i) =\frac{-x}{\sqrt{R^2+x^2}}= \frac{\gamma_{\mathcal{I}V}-\gamma_{\mathcal{I}L}}{ \gamma_{LV}}\); see Figure~\ref{fig:heuristic} (\textbf{right}).
We consider the induced localized interfacial energy
\[
\bar{\mathcal{E}}
 \, = \,
 \gamma_{LV} |\Sigma_{LV}\cap {\{z\leq r\}}|
 \ + \
 \gamma_{\mathcal{I}L} |\Sigma_{\mathcal{I}L}|
 \ + \
 \gamma_{\mathcal{I}V} |\Sigma_{\mathcal{I}V}|
 \ + \
 \int_{\Sigma_{SL}} \gamma_{SL}(\mathbf{x} ) \,d\mathbf{x}
 \ + \
 \int_{\Sigma_{SV}} \gamma_{SV}(\mathbf{x} )
 \, \,d\mathbf{x}.
\]

\begin{figure}[t]
\centering
\hspace{-10mm}
\begin{tikzpicture}
\begin{scope}[xshift=0cm, scale=0.6]
\begin{scope}
\fill[blue!20]
(-4,-2) -- (3,-2) -- (0,4)-- (-4,4) -- cycle; 
\end{scope}

% Imaginaey solid plane: y = 0
\draw[thick] (-4,0) -- (4,0);

% Interface
\draw[red, thick] (0,4) -- (2.5,-1);

\draw[red, thick, dashed] (2.5,-1) -- (3,-2);

% Imaginaey contact angle θ_i
\coordinate (O) at (2,0);
\coordinate (S) at (0,0); 
\coordinate (I) at (2.5,-1);   
\draw pic[draw=black,
line width=1.0pt,   
angle radius=20pt,
angle eccentricity=1.2]
{angle = S--O--I};

\node at (1.6, -1/2) {$\theta_i$};
\node at (2.7, 1/2) {$(x,r)$};
\node at (0.0, 4.4) {$z=R$};
\node at (1.4, 2.4) {$\gamma_{LV}$};
\end{scope}

\begin{scope}[xshift=7cm, scale=0.6]
\begin{scope}
\fill[blue!20]
(-4,-2) -- (3,-2) -- (0,4)-- (-4,4) -- cycle; 
\end{scope}

% Imaginaey solid plane: y = 0
\draw[blue, thick] (-4,0) -- (4,0);

% Interface
% \draw[blue, thick, dashed] (0,4) -- (2.0,0);

\draw[blue, thick] (2,0) -- (2.5,-1);
\draw[blue, thick, dashed] (2.5,-1) -- (3,-2);

% Imaginaey contact angle θ_i
\coordinate (O) at (2,0);
\coordinate (S) at (0,0); 
\coordinate (I) at (2.5,-1);   
\draw pic[draw=black,
line width=1.0pt,   
angle radius=20pt,
angle eccentricity=1.2]
{angle = S--O--I};

\node at (1.6, -1/2) {$\theta_i$};
\node at (2.7, 1/2) {$(x,r)$};
\node at (0.0, 4.4) {$z=R$};
\node at (3.7, -0.4) {$\gamma_{{\mathcal{I}}V}$};
\node at (-1.3, -0.4) {$\gamma_{{\mathcal{I}}L}$};
\end{scope}
\end{tikzpicture}
\caption{
{\bf (left)} 2D CMCF in a finite channel. The red line represents the interface associated with the global interfacial energy \(\mathcal{E}\).
{\bf (right)} 2D CMCF with imaginary solid \(\mathcal{I}\). The blue line represents the interface associated with the localized interfacial energy \(\bar{\mathcal{E}}\).
}
\label{fig:heuristic}
\end{figure}

A direct calculation shows that the first variations of \(\mathcal{E}\) and \(\bar{\mathcal{E}}\) with respect to \(x\) coincide. Formally,
\begin{equation*}
\begin{aligned}
    \mathcal{E}-\bar{\mathcal{E}}
    &= \gamma_{LV} |\Sigma_{LV}\cap {\{z\geq r\}}|
 \ - \
 \gamma_{\mathcal{I}L} |\Sigma_{\mathcal{I}L}|
 \ - \
 \gamma_{\mathcal{I}V} |\Sigma_{\mathcal{I}V}| \\
    \frac{d}{dx} \Bigl(\mathcal{E}-\bar{\mathcal{E}}\Bigr)&=\frac{d}{dx}\sqrt{R^2+x^2}\gamma_{LV}-\gamma_{\mathcal{I}L} +\gamma_{\mathcal{I}V}\\
    &=-\cos(\theta_i)\gamma_{LV}-\gamma_{\mathcal{I}L} +\gamma_{\mathcal{I}V} \\
    &= - (\gamma_{\mathcal{I}V}- \gamma_{\mathcal{I}L})  -\gamma_{\mathcal{I}L} +\gamma_{\mathcal{I}V} \\
    &=0.
\end{aligned}
\end{equation*}

Here we have written the length of \(\Sigma_{LV}\cap\{z\geq r\}\) as \(\sqrt{R^2+x^2}\), which is the leading-order expression since \(r\sim 1\ll R\).
This suggests that the localized interfacial energy \(\bar{\mathcal{E}}\) approximates the global interfacial energy \(\mathcal{E}\) at the level of first variation, in the sense that the linearizations of the two energies with respect to the perturbation parameter \(x\) coincide.

Motivated by this two-dimensional heuristic example, we expect that the localized energy \(\mathcal{E}^\tau\) provides an approximation of the global interfacial energy \(\mathcal{E}^{\tau_1,\tau_2}\) defined on \(\bar{\Omega}\) without the imaginary solid region:
\begin{equation*}
\begin{aligned}
&\mathcal{E}^{\tau_1,\tau_2} (\mathbf{1}_{L},\mathbf{1}_{V}; \bar{\Omega}) \ = \
\frac{\sqrt{\pi}\gamma_{LV}}{\sqrt{\tau_1}} \int_{\bar{\Omega}} \mathbf{1}_{L} \,G_{\tau_1} \ast \mathbf{1}_{V} \,d\mathbf{x}
\ + \
\frac{\sqrt{\pi}\gamma_{S L}}{\sqrt{\tau_2}} \int_{\bar{\Omega}} \mathbf{1}_{L} \,G_{\tau_2} \ast \mathbf{1}_{S} \,d\mathbf{x}
\ + \
\frac{\sqrt{\pi}\gamma_{S V}}{\sqrt{\tau_2}} \int_{\bar{\Omega}} \mathbf{1}_{V} \,G_{\tau_2} \ast \mathbf{1}_{S} \,d\mathbf{x}.
\end{aligned}
\end{equation*}
The corresponding linearized functional \({\mathcal{L}^{\tau_1,\tau_2}}\) is defined by

\begin{equation*}
 \begin{aligned}
&{\mathcal{L}^{\tau_1,\tau_2}}(v_1,v_2, u^k_1,u^k_2;
\bar{\Omega}) :=\sqrt{\pi}\int_{\bar{\Omega}} v_1\left(\frac{\gamma_{LV}}{\sqrt{\tau_1}} \,G_{\tau_1} \ast u^k_2 +\frac{\gamma_{S L}}{\sqrt{\tau_2}}\,G_{\tau_2}\ast \mathbf{1}_{S}\right) \, \,d\mathbf{x}
\, + \,
\sqrt{\pi}\int_{\bar{\Omega}} v_2 \left(\frac{\gamma_{LV}}{\sqrt{\tau_1}}\,G_{\tau_1} \ast u^k_1 +\frac{\gamma_{SV}}{\sqrt{\tau_2}} \,G_{\tau_2} \ast\mathbf{1}_{S}\right) \, \,d\mathbf{x}.
\end{aligned}
\end{equation*}

For small \(\tau_1 > 0\), it is clear that \(\mathcal{E}^{\tau_1,\tau_2}\) approximates \(\mathcal{E}\).
Therefore, \(\mathcal{E}^{\tau_1,\tau_2}\) can be viewed as an intermediate approximation between \(\mathcal{E}^\tau\) and \(\mathcal{E}\).
This motivates the following reference iteration, formulated entirely in terms of \(\mathcal{E}^{\tau_1,\tau_2}\).
Replacing \(\mathcal{L}\) by \(\mathcal{L}^{\tau_1,\tau_2}\) and \(\mathcal{E}\) by \(\mathcal{E}^{\tau_1,\tau_2}\), we obtain the following reformulation of the TSA iteration:

\begin{align}
(u_1^{2n+1}, u_2^{2n+1})
&= \arg\min_{(u_1,u_2)\in \bar{\mathcal{K}}_{\mathrm{near}}(u_1^{2n},u_2^{2n})} \mathcal{L}^{\tau_1,\tau_2}(u_1-u_1^{2n},u_2-u_2^{2n},u_1^{2n}, u_2^{2n}; \bar{\Omega})
\label{iter:step1_schwarz} \\
(u_1^{2n+2}, u_2^{2n+2})
&= \arg\min_{(u_1,u_2)\in \bar{\mathcal{K}}_{\mathrm{far}}(u_1^{2n+1},u_2^{2n+1})} \mathcal{E}^{\tau_1, \tau_2}( u_1,u_2; \bar{\Omega}).
\label{iter:step2_schwarz}
\end{align}

\begin{remark}
Note that the Neumann-like boundary condition in Step~\ref{iter:step1_TSA} is replaced by a Dirichlet boundary condition in \eqref{iter:step1_schwarz}.
The fast implementation of the MBO scheme relies on the convolution (FFT) with the heat kernel, which is natural under periodic or Neumann-like boundary conditions as in \eqref{prob:near_channel}.
Moreover, our diffusion step is realized through a multiphase convolution process~\eqref{equ:phi_in_MBO}.
Since this realization is not formulated through a boundary-value diffusion PDE, Dirichlet boundary conditions cannot be imposed directly.
\end{remark}

One of the important features of the Schwarz alternating method applied to minimization problems is its energy-decaying property.
The following theorem states that reference iterations \eqref{iter:step1_schwarz} and \eqref{iter:step2_schwarz} also satisfy an energy decaying property.

\begin{theorem}[Energy-decaying property of the reference iteration] \label{t:TSA_energy}
Let $(u_1^0,u_2^0) \in \bar{\mathcal{K}}$ with $\mathcal{E}^{\tau_1,\tau_2} \left(u_1^0,u_2^0; \bar{\Omega} \right) < \infty$.
The iterations \eqref{iter:step1_schwarz} and \eqref{iter:step2_schwarz} satisfy the energy-decreasing property:
\[
\mathcal{E}^{\tau_1,\tau_2} \left( u_1^{n+1}, u_2^{n+1}; \bar{\Omega} \right)
\ \leq \
\mathcal{E}^{\tau_1,\tau_2} \left(u_1^{n},u_2^{n}; \bar{\Omega}\right),
\quad \forall n \ge 0.
\]
\end{theorem}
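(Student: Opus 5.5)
We split into odd and even steps. For the far-region update \eqref{iter:step2_schwarz} the inequality is immediate. The previous iterate $(u_1^{2n+1},u_2^{2n+1})$ lies in its own constraint set $\bar{\mathcal{K}}_{\mathrm{far}}(u_1^{2n+1},u_2^{2n+1})$, and $(u_1^{2n+2},u_2^{2n+2})$ minimizes $\mathcal{E}^{\tau_1,\tau_2}$ over that set. Hence $\mathcal{E}^{\tau_1,\tau_2}(u^{2n+2})\le\mathcal{E}^{\tau_1,\tau_2}(u^{2n+1})$. Finiteness of the energy propagates along the iteration, so all differences below are well defined.

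For the near-region update \eqref{iter:step1_schwarz} we mimic the proof of Theorem~\ref{thm:stablity_wetMBO}. First we verify the expansion analogous to \eqref{equ:expansion_interface_energy} on $\bar{\Omega}$:
\[
\mathcal{E}^{\tau_1,\tau_2}(u_1,u_2;\bar{\Omega})=\mathcal{E}^{\tau_1,\tau_2}(u_1^k,u_2^k;\bar{\Omega})+\mathcal{L}^{\tau_1,\tau_2}(u_1-u_1^k,u_2-u_2^k,u_1^k,u_2^k;\bar{\Omega})+\bar H(u_1-u_1^k,u_2-u_2^k),
\]
where $\bar H(v_1,v_2)=\sqrt{\pi}\gamma_{LV}\tau_1^{-1/2}\int v_1\,G_{\tau_1}\ast v_2\,d\mathbf{x}$. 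This is an exact identity: the solid terms are linear in $(u_1,u_2)$, and the bilinear term $\int u_1\,G_{\tau_1}\ast u_2$ expands using the symmetry of $G_{\tau_1}$. Since $u_1+u_2=u_1^k+u_2^k=1$ on $C_{\mathrm{near}}\cup C_{\mathrm{far}}$, we have $v_2=-v_1$. (We extend $v_i$ by zero on $S$; the solid contributes only through the fixed $\mathbf{1}_S$.) Therefore
\[
\bar H=-\tfrac{\sqrt{\pi}\gamma_{LV}}{\sqrt{\tau_1}}\int v_1\,G_{\tau_1}\ast v_1\,d\mathbf{x}=-\tfrac{\sqrt{\pi}\gamma_{LV}}{\sqrt{\tau_1}}\int|\hat v_1(\xi)|^2 e^{-4\pi^2\tau_1|\xi|^2}\,d\xi\le0,
\]
because the Fourier transform of the Gaussian is positive. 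This is the key observation, and it matches the form of $H$ given before Theorem~\ref{thm:stablity_wetMBO}.

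Next, $(u_1^{2n},u_2^{2n})$ belongs to $\bar{\mathcal{K}}_{\mathrm{near}}(u_1^{2n},u_2^{2n})$ and gives $\mathcal{L}^{\tau_1,\tau_2}=0$, since $\mathcal{L}^{\tau_1,\tau_2}$ is linear in its first two arguments. The minimizer therefore satisfies $\mathcal{L}^{\tau_1,\tau_2}(u^{2n+1}-u^{2n},u^{2n})\le0$. Combining this with $\bar H\le0$ in the expansion gives $\mathcal{E}^{\tau_1,\tau_2}(u^{2n+1})\le\mathcal{E}^{\tau_1,\tau_2}(u^{2n})$.

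The main obstacle is existence of the minimizers, so that the argmins are attained. For the linear step this follows as in Lemma~\ref{lem:iter_sol}: $\mathcal{L}^{\tau_1,\tau_2}$ is a pointwise-linear integral over a convex constraint set, so the minimum is attained pointwise by thresholding the coefficient difference on $C_{\mathrm{near}}\setminus C_{\mathrm{far}}$ and the overlap. For the far step $\mathcal{E}^{\tau_1,\tau_2}$ is a continuous, bounded-below functional on the weak-$*$ compact set $\bar{\mathcal{K}}_{\mathrm{far}}$, so a minimizer exists by the direct method. In either step, if minimizers are not unique, any choice suffices, since the argument used only the comparison with the previous iterate.
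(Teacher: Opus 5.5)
Your proposal is correct and follows the same route as the paper. For the near step you use the exact quadratic expansion of $\mathcal{E}^{\tau_1,\tau_2}$ about the previous iterate, compare against the admissible competitor $(u_1^{2n},u_2^{2n})$ to get $\mathcal{L}^{\tau_1,\tau_2}\le 0$, and use nonpositivity of the remainder; for the far step you use direct minimality. Your Fourier-positivity computation of the remainder usefully supplies the justification behind the paper's bare assertion $H\le 0$. The existence paragraph is not needed, since the paper takes the argmins as given, and as written its weak-$*$ compactness claim for $\bar{\mathcal{K}}_{\mathrm{far}}$ would need more care, because the $BV$ requirement in $\bar{\mathcal{K}}$ is not preserved under weak-$*$ limits without a total-variation bound.
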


\begin{proof}
By the expansion of the energy functional $\mathcal{E}^{\tau_1,\tau_2} (u_1,u_2; \bar\Omega)$, which is similar to \eqref{equ:expansion_interface_energy}, we show that the energy is non-increasing at each iteration in both regions.
In the near-region, we have
\begin{align*}
    \mathcal{E}^{\tau_1,\tau_2} (u_1^{2n+1},u_2^{2n+1}; \bar\Omega)-\mathcal{E}^{\tau_1,\tau_2} (u^{2n}_1,u^{2n}_2; \bar\Omega)
    &= {\mathcal{L}^{\tau_1,\tau_2}}(u_1^{2n+1}-u^{2n}_1,u_2^{2n+1}-u^{2n}_2, u^{2n}_1,u^{2n}_2; \bar{\Omega}) +H(u_1^{2n+1}-u^{2n}_1,u_2^{2n+1}-u^{2n}_2; \bar\Omega) \\
    &\leq H(u_1^{2n+1}-u^{2n}_1,u_2^{2n+1}-u^{2n}_2;\bar\Omega)
    \, \\&\leq \,  0,
\end{align*}
where \(H\) is as in \eqref{equ:expansion_interface_energy}, with \(\Omega\) replaced by \(\bar{\Omega}\), and \[{\mathcal{L}^{\tau_1,\tau_2}}(u_1^{2n+1}-u^{2n}_1,u_2^{2n+1}-u^{2n}_2, u^{2n}_1,u^{2n}_2; \bar\Omega)\leq \mathcal{L}^{\tau_1,\tau_2}(u_1^{2n}-u^{2n}_1,u_2^{2n}-u^{2n}_2, u^{2n}_1,u^{2n}_2; \bar\Omega) = 0\] comes from \eqref{iter:step1_schwarz}.
The proof follows the same idea as Theorem~\ref{thm:stablity_wetMBO}, but with a Dirichlet boundary condition.

In the far-region, the energy is minimized directly in \eqref{iter:step2_schwarz}, hence
\begin{align*}
    &\mathcal{E}^{\tau_1,\tau_2} (u^{2n+2}_1,u^{2n+2}_2; \bar\Omega)-\mathcal{E}^{\tau_1,\tau_2} (u_1^{2n+1},u_2^{2n+1};\bar\Omega) \leq 0.
\end{align*}
\end{proof}

This completes the reformulation of the TSA method and energy analysis. We now proceed to the implementation and present the numerical details.

\section{Implementation of the TSA method} \label{s:Imple}

In this section, we describe the numerical implementation of approximating the CAH interval via the channel method, where the CMCF~\eqref{prob:channel} is simulated by the TSA method of Algorithm~\ref{alg:TSA}.
To approximate the directional CAH of the rough surface $z = \psi(x,y) = \eps\sin\frac{x}{\eps}\sin\frac{y}{\eps}$
along various rational directions, we identify the maximal and minimal stationary solutions, starting from the lower and upper angle barrier configurations~\eqref{equ:angle_barriers}, respectively.
We begin with an overview of the TSA method.

\subsection{Overview of the TSA method} \label{sec:TSA_overview}

\noindent \emph{Step 1: Initialization.}
Let $C_{\mathrm{near}}$ and $C_{\mathrm{far}}$ denote the near- and far-regions.
Initialize the liquid, vapor, and solid regions $L^0, V^0, S \subset C_{\mathrm{near}}$, together with an imaginary solid region $\mathcal{I} \subset C_{\mathrm{far}} \setminus C_{\mathrm{near}}$ and initial imaginary angle $\theta_i^0$.
Set the time steps $(\tau_1, \tau_2,\tau_3)$, where
$\tau_2 = \left(\frac{\pi \cos\theta_Y}{\pi - 2 \theta_Y}\right)^2 \tau_1$ and $\tau_3 = \left(\frac{\pi \cos\theta_i^0}{\pi - 2 \theta_i^0}\right)^2 \tau_1$,
and a tolerance $\mathrm{tol}>0$.
Assume \(\Sigma_{LV}\) is represented in the far-region by a graph \(x = x(y,z)\) over the domain $\{(y,z) : y \in [-l_y, l_y],\ z \in [r, R]\}$, as in Theorem~\ref{thm:aprox_channel_min_upper_interface}.

\medskip

\noindent \emph{Step 2: Near-region update (MBO).}
At each iteration, the near-region $C_{\mathrm{near}}$ (together with $\mathcal{I}$) is centered at the liquid–vapor interface.
Then, for iteration $k$, we compute \(\phi\) defined in \eqref{equ:phi_in_MBO} and update the liquid and vapor regions by thresholding~\eqref{iter:threshold}.
This step can be interpreted as minimizing the linearized energy \(\mathcal{L}\) in Step~\ref{iter:step1_TSA}.

We emphasize that the Dirichlet boundary condition in \eqref{iter:step1_schwarz} is used at the analytical level to define a well-posed reference problem.
In contrast, the numerical scheme employs a Neumann-like boundary condition as in \eqref{prob:near_channel}, which preserves the structure of the MBO diffusion step and is more suitable for the convolution-based implementation.

Then the imaginary angle \(\theta_i^{k+1}\), together with the time step \(\tau_3\), is updated according to the upper trace of interface $\Sigma_{LV} \cap\partial C_{\mathrm{far}}$ to maintain the imposed upper boundary condition.

\medskip
\noindent \emph{Step 3: Far-region update (Minimal Surface).}
The liquid–vapor interface is reconstructed by solving a linearized minimal surface problem over $C_{\mathrm{far}}$, from the trace of updated near-region interface as boundary data.
Specifically, we compute \(x(y,z)\) in \eqref{equ:far_region_sol}.
The constructed interface hence approximates the solution of \eqref{iter:step2_schwarz}, which can be interpreted as a minimal surface connecting the trace of the lower interface to the upper boundary portion $L \cap \{z=R\} = \{(x,y,R)\colon x \le 0\}$.

With a slight abuse of notation, we update $L^{k+1}$ and $V^{k+1}$ only in $C_{\mathrm{near}} \cap C_{\mathrm{far}}$ using the approximated minimal surface.

\medskip
\noindent \emph{Step 4: Two-level stopping criterion.} 

\emph{Inner iteration (Algorithm~\ref{alg:TSA}).}
For a fixed time-step pair $(\tau_1,\tau_2)$, we repeat Steps~2 and~3 until
\begin{equation*}
    \|1_{L^{k+1}}-1_{L^k}\|_{L^1(C_{\mathrm{near}})}
< \mathrm{tol}.
\end{equation*}

This criterion identifies an approximate stationary state of the TSA
iteration for the current time steps. After convergence, we halve the time
steps and continue the iteration using the computed stationary state as the
initial configuration. 

\emph{Outer iteration.}
The overall procedure is terminated when the stationary states obtained from
two successive time-step pairs differ by less than $\mathrm{tol}$ in the
$L^1(C_{\mathrm{near}})$ norm.

\begin{remark}
We use a two-level stopping criterion following the MBO scheme of \cite{Wang_2019}.
The criterion first checks convergence of the TSA iteration for a fixed time step, and then checks the effect of time-step refinement on the computed configuration.
A time step that is too large may introduce excessive smoothing during the diffusion step and average out features of the rough surface near the contact line.
We therefore first use a sufficiently large time step to evolve the interface to an approximate stationary state, and then successively halve the time step to refine the contact-line configuration.
The refinement is stopped when further halving produces no change in the computed configuration on the fixed spatial grid.
\end{remark}

\vspace{5mm}

\subsection{Implementation details for Step 1: initialization}

Consider the rough solid surface
\[
S:=\{(x,y,z)\colon z\leq \psi(x,y)=\eps\sin\frac{x'}{\eps}\sin\frac{y'}{\eps}\}
\]
where $\eps=0.1$ and $\begin{bmatrix}x'\\y'\end{bmatrix}
=\begin{bmatrix}
    \cos\arg\mathbf{k} & -\sin\arg\mathbf{k} \\ \sin\arg\mathbf{k}&\cos\arg\mathbf{k}
\end{bmatrix}\begin{bmatrix}
    x\\y
\end{bmatrix}$.  Young's angle is set to $\theta_Y = \tfrac{\pi}{3}$, and we choose directions $\mathbf{k}$ with $\arg \mathbf{k} = \arctan(\tfrac{i}{6})$, $i=-6,-5,\dots,6$, to preserve the $\mathbb{Z}^2$-periodicity of the surface in the $xy$-plane. See Figure~\ref{fig:rotation_Z2}.

We implement the associated channel method in a channel of height $R=3.3$.
The far-region is located above $z=r=0.5$,
and the near-region has height $\frac{6}{5}r=0.6$.

The computational domain \(\Omega_{\mathrm{near}} =C_{\mathrm{near}}\cup S\cup {\mathcal{I}}\) is chosen as (with reflection in $x$-direction)
\[
x \in [-2l_x,\, 2l_x],
\qquad
y \in [-l_y,\, l_y],
\qquad
z \in [-l_z,\, l_z],
\]
where
\[
l_x = 4\pi\eps,
\qquad
l_y = \pi\eps \sqrt{p^2 + q^2},
\qquad
l_z = 6\eps,
\]
and $\tfrac{p}{q} = \tan(\arg\mathbf{k})$ in lowest terms, so that the domain size in the $y$-direction matches the rotated $\mathbb{Z}^2$-periodicity.

Each direction is uniformly discretized using $N=512$ grid points (with an additional $N$ grid points in the reflection), yielding mesh sizes
\[
\Delta x = \frac{2l_x}{N},
\qquad
\Delta y = \frac{2l_y}{N},
\qquad
\Delta z = \frac{2l_z}{N}.
\]
The discrete grid is then used for the convolution step in the MBO iteration via FFT, where the initial time step \(\tau_1\) is chosen as \(10^{-3}\).

The solid region is placed below \(z=-\frac{3}{5}r+\psi(x,y)\), while the imaginary solid region \(\mathcal{I}\) is placed above \(z=\frac{3}{5}r\).
As a result, the near-region is located at the center of the computational domain.
The interface portion \(\Sigma_{LV}\cap C_{\mathrm{near}}\) is positioned at the center of the left half of the domain.
A mirrored copy of this configuration is then constructed in the right half, allowing the use of the FFT without introducing boundary artifacts in the \(x\)-direction.
After each update, the computational domain is shifted so that the interface remains centered.

Starting from the lower and upper angle barrier configurations,
we compute the stationary states of the Schwarz alternating dynamics.
The resulting apparent contact angles are recorded as approximations of the receding and advancing angles, see Section~\ref{sec:two-level_stopping}.

\begin{figure}[t]
\centering

\begin{tikzpicture}[scale=0.5]
\def\radius{0.07}
\def\maxgrid{3}

% FIRST: Z^2
\begin{scope}
  \node at (0,5.2) {$\theta = 0$};
  % Axes
  \draw[->, thick] (-\maxgrid-0.5,0) -- (\maxgrid+0.5,0) node[anchor=west] {$x$};
  \draw[->, thick] (0,-\maxgrid-0.5) -- (0,\maxgrid+0.5) node[anchor=south] {$y$};
  % Lattice
  \foreach \i in {-3,...,3} {
    \foreach \j in {-3,...,3} {
      \filldraw[black] (\i,\j) circle (\radius);
    }
  }
  \draw[blue, thick, fill=blue!10, opacity=0.4] (0,0) -- (1,0) -- (1,1) -- (0,1) -- cycle;
\end{scope}

% SECOND: rotation by arctan(1/3)
\begin{scope}[xshift=10cm]
  \node at (0,5.2) {$\theta = \arctan(\tfrac{1}{3})$};
  \pgfmathsetmacro{\theta}{atan(1/3)}
  \pgfmathsetmacro{\ct}{cos(\theta)}
  \pgfmathsetmacro{\st}{sin(\theta)}
  % Axes
  \draw[->, thick] (-4,0) -- (4,0) node[anchor=west] {$x$};
  \draw[->, thick] (0,-4) -- (0,4) node[anchor=south] {$y$};
  % Lattice
  \foreach \i in {-3,...,3} {
    \foreach \j in {-3,...,3} {
      \pgfmathsetmacro{\x}{\ct*\i + \st*\j}
      \pgfmathsetmacro{\y}{-\st*\i + \ct*\j}
      \filldraw[black] (\x,\y) circle (\radius);
      % Note that now [i,j] = M_\theta [x,y], circles are drawn only if [i,j] a interger pair
    }
  }
  % Define rotated basis vectors
  \pgfmathsetmacro{\vxone}{\ct}
  \pgfmathsetmacro{\vyone}{-\st}
  \pgfmathsetmacro{\vxtwo}{+\st}
  \pgfmathsetmacro{\vytwo}{\ct}

  % Fundamental cell
  \draw[blue, thick, fill=blue!10, opacity=0.4]
  (0,0) -- (\vxone,\vyone)
  -- ({\vxone+\vxtwo},{\vyone+\vytwo})
  -- (\vxtwo,\vytwo) -- cycle;
  \draw[red, thick, fill=red!10, opacity=0.4] (0,0) -- (10^0.5,0) -- (10^0.5,10^0.5) -- (0,10^0.5) -- cycle;
\end{scope}

% THIRD: rotation by arctan(2/3)
\begin{scope}[xshift=20cm]
  \node at (0,5.2) {$\theta = \arctan(\tfrac{2}{3})$};
  \pgfmathsetmacro{\theta}{atan(2/3)}
  \pgfmathsetmacro{\ct}{cos(\theta)}
  \pgfmathsetmacro{\st}{sin(\theta)}
  % Axes
  \draw[->, thick] (-4,0) -- (4,0) node[anchor=west] {$x$};
  \draw[->, thick] (0,-4) -- (0,4) node[anchor=south] {$y$};
  % Lattice
  \foreach \i in {-3,...,3} {
    \foreach \j in {-3,...,3} {
      \pgfmathsetmacro{\x}{\ct*\i + \st*\j}
      \pgfmathsetmacro{\y}{-\st*\i + \ct*\j}
      \filldraw[black] (\x,\y) circle (\radius);
    }
  }
  % Define rotated basis vectors
  \pgfmathsetmacro{\vxone}{\ct}
  \pgfmathsetmacro{\vyone}{-\st}
  \pgfmathsetmacro{\vxtwo}{\st}
  \pgfmathsetmacro{\vytwo}{\ct}

  % Fundamental cell
  \draw[blue, thick, fill=blue!10, opacity=0.4]
  (0,0) -- (\vxone,\vyone)
  -- ({\vxone+\vxtwo},{\vyone+\vytwo})
  -- (\vxtwo,\vytwo) -- cycle;
  \draw[red, thick, fill=red!10, opacity=0.4] (0,0) -- (13^0.5,0) -- (13^0.5,13^0.5) -- (0,13^0.5) -- cycle;
\end{scope}

\end{tikzpicture}
\vspace{1em}
\caption{Visualization of the $\mathbb{Z}^2$ lattice under rotation.
Left: standard lattice. Middle/right: lattices rotated by $\theta = \arctan(\tfrac{1}{3})$ and $\theta = \arctan(\tfrac{2}{3})$, respectively.
Highlighted fundamental cells (blue for the original lattice, red for the rotated lattices) illustrate the preservation of the $\mathbb{Z}^2$-periodicity.}
\label{fig:rotation_Z2}
\end{figure}

\subsection{Implementation details for Step 2: near-region update (MBO)} \label{sec:Imple_step2}
In the near-region \(C_{\mathrm{near}}\), the MBO scheme is implemented using MATLAB's built-in \texttt{fft}, which requires a periodic computational domain.
Periodicity in the $y$-direction is ensured by the $\mathbb{Z}^2$-periodicity of the surface, while in the $x$-direction it is imposed via reflection.
In the $z$-direction, the solid regions (including the imaginary solid) prevent boundary errors from affecting the liquid–vapor interface $\Sigma_{LV}$, which evolves only within $C_{\mathrm{near}}$.
For the convolution kernel, let $\tau_1$ denote the base time step.
The time steps \(\tau_2\) and \(\tau_3\) are given as in \eqref{equ:tau23}, which enforce the prescribed contact angles $\theta_Y$ and $\theta_i$ in the threshold dynamics.

The interface is initialized as a flat surface $x^0(y,z)$ with slope $\frac{dx}{dz} =\pm 3$ in the $xz$-plane, corresponding to the lower and upper angle barrier configurations~\eqref{equ:angle_barriers} in the channel method, as shown in Figure~\ref{fig:zoomin_channel}.
In practice, this is done by defining the initial liquid region as
\begin{equation} \label{equ:ICs}
    L^0 := \{(x,y,z) \in C_{\mathrm{near}}: x  \mp 3 (z-R)<0\}.
\end{equation}
The imaginary angle $\theta_i$ is updated after each iteration to approximate the upper boundary condition of the near-region.
It is computed as
\[
\theta_i = \frac{\pi}{2} - \arctan\left( m\right),
\]
where \(m=\frac{1}{(r-R)} \dashint_{-l_y}^{l_y} f(y)\,dy\) and $x=f(y)$ is the trace of the liquid–vapor interface on the lower boundary of $C_{\mathrm{far}}$,
and $r$ is the height of the lower boundary of $C_{\mathrm{far}}$.

Note that $L^k$, $V^k$, and $S$ are now defined only within the near-region $C_{\mathrm{near}}$.

\subsection{Implementation details for Step 3: far-region update (minimal surface)}
In the far-region, the interface is represented as a graph
\[
x = x(y,z), \quad (y,z) \in [-l_y,l_y] \times [r,R].
\]
The explicit formula for $x(y,z)$ is given in Theorem~\ref{thm:aprox_channel_min_upper_interface} where $A_n$ and $B_n$ are the Fourier coefficients of
\(
g(y)=f(\sqrt{1+m^2}\,y-l_y)-m(r-R).
\)

The truncated series can also be obtained efficiently via FFT.
Specifically, we evaluate \(g(y)\) on a uniform grid over $[-l_y,l_y]$ and compute its discrete Fourier transform.
The cosine and sine coefficients are extracted from the real and imaginary parts of the Fourier modes, and the corresponding factors are applied to  each Fourier mode to reconstruct the truncated series for $x(y,z)$. The truncation parameter \(N_{\mathrm{F}}\) is chosen to be the same as the discretization parameter \(N\). 

Note that this truncated series is then used to update only the portion of \(L^k\) and \(V^k\) over the transition region \(C_{\mathrm{near}}\cap C_{\mathrm{far}}\).

\subsection{Implementation details for Step 4: two-level stopping criterion} \label{sec:two-level_stopping}

For the inner iteration, we use the discrete $\ell^1$-norm of the discrete indicator function $1_{L^k}$ as the stopping criterion, up to a constant grid-volume factor.
The iteration is terminated when
\[
\|1_{L^{k-1}}-1_{L^{k}}\|_1 < \mathrm{tol},
\]
where $\|\cdot\|_1$ denotes the sum of the absolute values of the discrete function $1_{L^{k-1}} - 1_{L^{k}}$ over all grid points.
The tolerance $\mathrm{tol}$ is chosen as \(10^{-7} N^3\) so that it is proportional to the number of grid points.
After convergence, the time step $\tau_1$ is halved, with $\tau_2$ and $\tau_3$ then updated according to \eqref{equ:tau23}, and the iteration is continued.

For the outer iteration, we initialize $L^\star = L^0$.
After each inner-iteration convergence, the resulting stationary state $L^k$ is compared with the recorded state $L^\star$.
If
\[
\|1_{L^{\star}}-1_{L^{k}}\|_1 < \mathrm{tol},
\]
the overall procedure is terminated; otherwise, we set $L^\star = L^k$ and continue with halved time steps.

The apparent contact angles of the stationary states obtained from the lower and upper angle barrier configurations~\eqref{equ:ICs} are taken as approximations of the receding (\(\thetarec\)) and advancing (\(\thetaadv\)) angles.
Since \(\theta_i\) is measured on the opposite side of the interface, the apparent contact angle is recorded as \(\pi-\theta_i\).

\subsection{Implementation details for computing the CAH interval}

For each direction \(\mathbf{k}\) with \(\arg \mathbf{k}=\arctan(\tfrac{i}{6})\), \(i=-6,-5, \ldots ,6\), the receding and advancing angles are approximated by the apparent angles of the stationary states obtained from the initial flat surfaces~\eqref{equ:ICs}.

\section{Numerical experiments} \label{s:NumExp}

In this section, we present numerical experiments for computing the directional CAH of the rough surface \(z=\psi(x,y)=\eps\sin\frac{x}{\eps}\sin\frac{y}{\eps}\).
Hysteresis intervals are approximated for various rational contact-line directions, allowing us to characterize the directional dependence of the CAH. We then examine the corresponding macroscopic slope and visualize representative liquid–vapor interfaces and contact lines.

\subsection{Directional CAH over rational directions}

For selected contact-line normals \(\mathbf{k}\), the hysteresis intervals were approximated using the TSA method.
Figure~\ref{fig:CAH_wrt_k} shows the CAH interval as a function of the contact-line orientation \(\arg\mathbf{k}\), together with the computed endpoint values.
The computed data include only 13 sampled rational directions $\arg\mathbf{k}=\arctan(i/6)$, $i=-6,-5,\ldots,6$; the full angular range is obtained by extending the result according to the symmetry of the surface.

\begin{figure}[t]
\centering
\begin{minipage}[t]{0.37\textwidth}%
\vspace{0pt}
\centering
\includegraphics[width=\linewidth, trim={85 135 15 80}, clip]{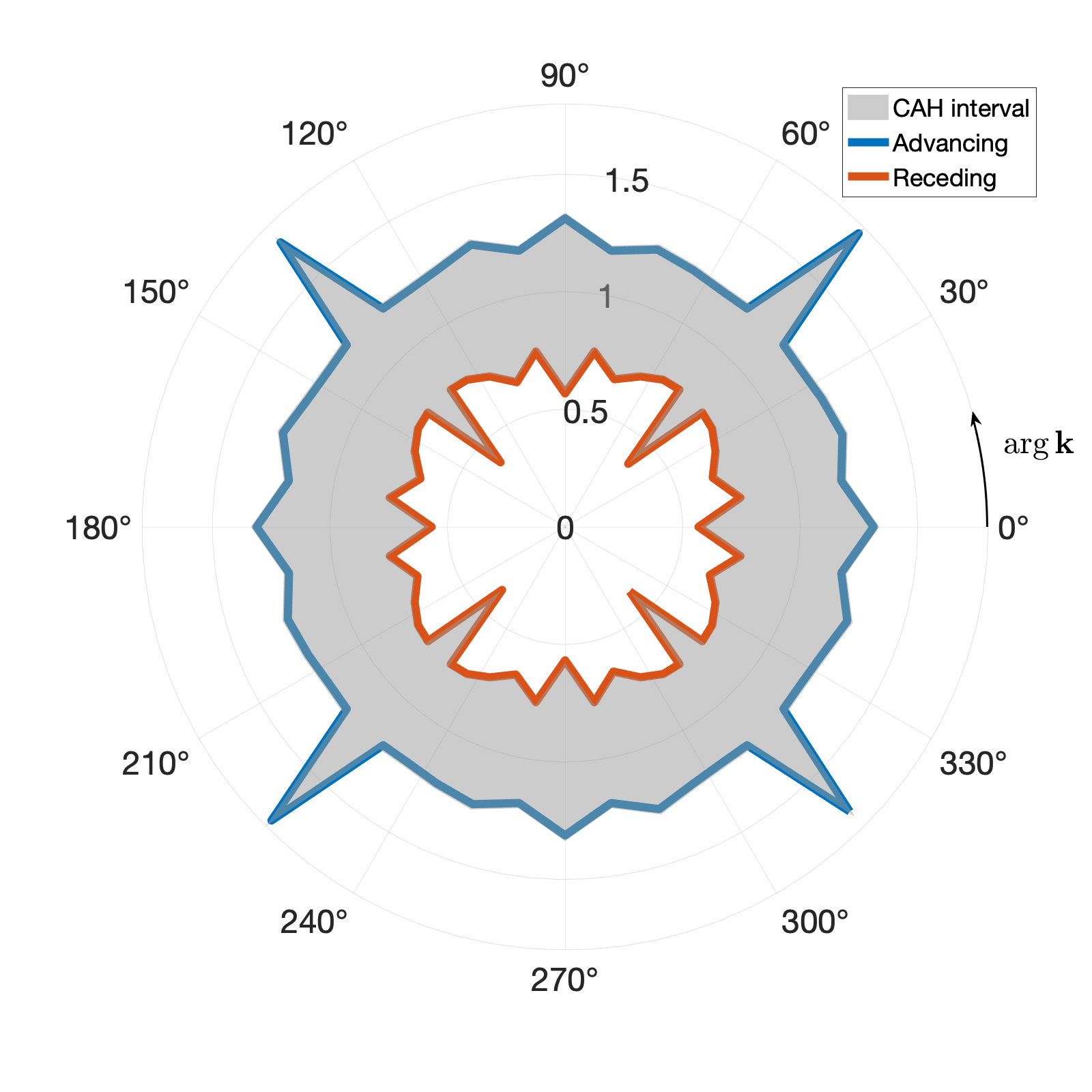}
\end{minipage}%
\hspace{1.5em}%
\begin{minipage}[t]{0.38\textwidth}%
\vspace{0pt}
\centering
\footnotesize
   \begin{tabular}{lccc}
        \toprule
        $\arg\mathbf{k}$ (rad)
        & $\thetarec$
        & $\thetaadv$
        & $\thetaadv-\thetarec$ \\
        \midrule
        $-\tfrac{\pi}{4}$
            & 0.3881 & 1.7121 & 1.3239 \\
        $-\arctan(\tfrac{5}{6})$
            & 0.7598 & 1.2089 & 0.4490 \\
        $-\arctan(\tfrac{2}{3})$
            & 0.7523 & 1.2051 & 0.4528 \\
        $-\arctan(\tfrac{1}{2})$
            & 0.7156 & 1.2202 & 0.5046 \\
        $-\arctan(\tfrac{1}{3})$
            & 0.6489 & 1.2653 & 0.6165 \\
        $-\arctan(\tfrac{1}{6})$
            & 0.7564 & 1.1908 & 0.4344 \\
        $0$
            & 0.5674 & 1.3122 & 0.7448 \\
        $\arctan(\tfrac{1}{6})$
            & 0.7564 & 1.1908 & 0.4344 \\
        $\arctan(\tfrac{1}{3})$
            & 0.6619 & 1.2431 & 0.5813 \\
        $\arctan(\tfrac{1}{2})$
            & 0.7156 & 1.2202 & 0.5046 \\
        $\arctan(\tfrac{2}{3})$
            & 0.7523 & 1.2051 & 0.4528 \\
        $\arctan(\tfrac{5}{6})$
            & 0.7598 & 1.2089 & 0.4490 \\
        $\tfrac{\pi}{4}$
            & 0.3793 & 1.7666 & 1.3873 \\
        \bottomrule
    \end{tabular}
\end{minipage}
\caption{Computed CAH interval for \(z= \eps \sin\frac{x}{\eps}\sin\frac{y}{\eps} \) with $\theta_Y=\tfrac{\pi}{3}$, for the 13 sampled rational directions $\arg\mathbf{k}=\arctan(\tfrac{i}{6})$, $i=-6,-5,\ldots,6$.
(\textbf{left}) The inner and outer curves denote the receding and advancing angles, respectively, and the shaded region denotes the computed interval; the radial coordinate is measured in radians.
(\textbf{right}) The corresponding endpoint values. The interval width $\thetaadv-\thetarec$ is computed from the unrounded endpoints. All angles are in radians.}
\label{fig:CAH_wrt_k}
\end{figure}

For every sampled rational orientation, the computed receding and advancing angles are distinct, yielding a nontrivial CAH interval.
Among the sampled directions, the interval is widest at the diagonal orientation \(\arg\mathbf{k}=\tfrac{\pi}{4}\).
Both endpoints change sharply between \(\arg\mathbf{k}=\arctan(\tfrac{5}{6})\) and \(\tfrac{\pi}{4}\), whereas their directional variation is smoother away from the diagonal.
This behavior is consistent with the symmetry of the rough surface and with the stronger pinning expected along its principal periodic directions.
In the following, we refer to this sharp directional change as the CAH discontinuity for convenience.
We emphasize that this terminology does not imply that a discontinuity has been established numerically, since the computations use a finite set of rational orientations.
In related exactly solvable models \cite{feldman2021limit,feldman2019free} the hysteresis interval does rigorously have discontinuous dependence on the normal direction at certain rational directions, so we do believe true discontinuity is likely.

The rough surface satisfies
$ \psi(x+\pi\eps,-y)=\psi(x,y), $
and hence the CAH interval is symmetric under
\(\arg\mathbf{k}\mapsto-\arg\mathbf{k}\). The computed endpoints reproduce this symmetry to four decimal places at \(\arg\mathbf{k}=\pm\arctan(\frac16)\), \(\pm\arctan(\frac 1 2)\), \(\pm\arctan(\frac23)\), and \(\pm\arctan (\frac 56)\). At \(\pm\arctan(\frac13)\), the receding and advancing endpoints differ by \(0.0130\) and \(0.0222\) radians, respectively, while at \(\pm\frac \pi 4\) they differ by \(0.0088\) and \(0.0545\) radians.

The pattern is not accidental: whether the finite-channel computation respects this symmetry at a given orientation is determined by the parity of the two integers that specify the direction, as we now show.
Let \(\alpha=\arg\mathbf{k}=\arctan(p/q)\), where \(p\) and \(q\) are relatively prime, and set \(D=p^2+q^2\). After rotating the propagation direction to the computational \(x\)-axis, the rough surface is
\[
\psi_\alpha(x,y)=\eps\sin\left(\frac{qx-py}{\eps\sqrt D}\right)\sin\left(\frac{px+qy}{\eps\sqrt D}\right).
\]
Along the phase-fixed line \(x=0\),
\[
\psi_\alpha(0,y)=-\eps\sin\left(\frac{py}{\eps\sqrt D}\right)\sin\left(\frac{qy}{\eps\sqrt D}\right),
\qquad
\psi_{-\alpha}(0,y)=\eps\sin\left(\frac{py}{\eps\sqrt D}\right)\sin\left(\frac{qy}{\eps\sqrt D}\right).
\]
Since \(l_y=\pi\eps \sqrt{p^2 + q^2}=\pi\eps\sqrt D\), the rotated surfaces satisfy
\[
\psi_{-\alpha}(x,-y+l_y)=(-1)^{p+q+1}\psi_\alpha(x,y).
\]
When \(p+q\) is odd, the transformation \((x,y)\mapsto(x,-y+l_y)\) leaves \(x=0\) fixed and maps the \(+\alpha\) finite-channel configuration to the \(-\alpha\) configuration without changing its phase relative to the upper boundary. This applies to \(\frac pq=\frac 16,\frac 12,\frac 23,\) and \(\frac 56\). For \(\frac pq=\frac 13\) and \(\frac pq=1\), both \(p\) and \(q\) are odd, so \(p+q\) is even and the same transformation changes \(\psi_\alpha\) to \(-\psi_\alpha\). Relating the two rotated roughness patterns then requires a nonzero translation in \(x\), which changes the phase fixed by the upper boundary \(x=0\). Thus, the finite-channel computations at \(\pm\arctan(\frac 13)\) and \(\pm\frac \pi4\) need not produce the same values, even though the CAH interval of the rough surface is symmetric.

Since the CAH interval of the rough surface is symmetric, the difference between the values computed at \(+\alpha\) and \(-\alpha\) measures the sensitivity of the finite-channel computation to the phase fixed by the upper boundary.
At \(\pm\arctan(\tfrac13)\) and \(\pm\tfrac{\pi}{4}\) this gives an error estimate of roughly \(0.01\)--\(0.05\) radians for the tabulated endpoints.
At the remaining sampled directions the transformation above forces the two computations to agree, so their agreement is automatic and provides no comparable estimate.

\subsection{Large-scale monotonicity of the slope \texorpdfstring{\(m\)}{m}}
\label{sec:numeric_slope}

The monotonicity of the macroscopic interface slope \(m\) is essential for the convergence of the CAH interval.
The energy interpretation discussed in Section~\ref{sec:energy_to_slope} provides a way to understand this monotonicity through the leading-order relation between the energy and the slope.
We now examine this behavior numerically.

At each iteration, we record the macroscopic slope \(m\) computed as described in Section~\ref{sec:Imple_step2}.
Figure~\ref{fig:slope_evolution} shows the evolution of \(m\) for two representative contact-line orientations, \(\arg\mathbf{k}=\arctan(\tfrac{5}{6})\) and \(\arg\mathbf{k}=\tfrac{\pi}{4}\).
In the large-scale plots, the advancing and receding processes both follow the monotone behavior predicted by the energy-slope relation.
The corresponding tail plots show small variations near the final stages of the evolution, while the overall large-scale trend remains unchanged.

\begin{figure}[t]
    \centering

    \begin{subfigure}{0.49\textwidth}
        \centering
        \includegraphics[width=.8\textwidth]{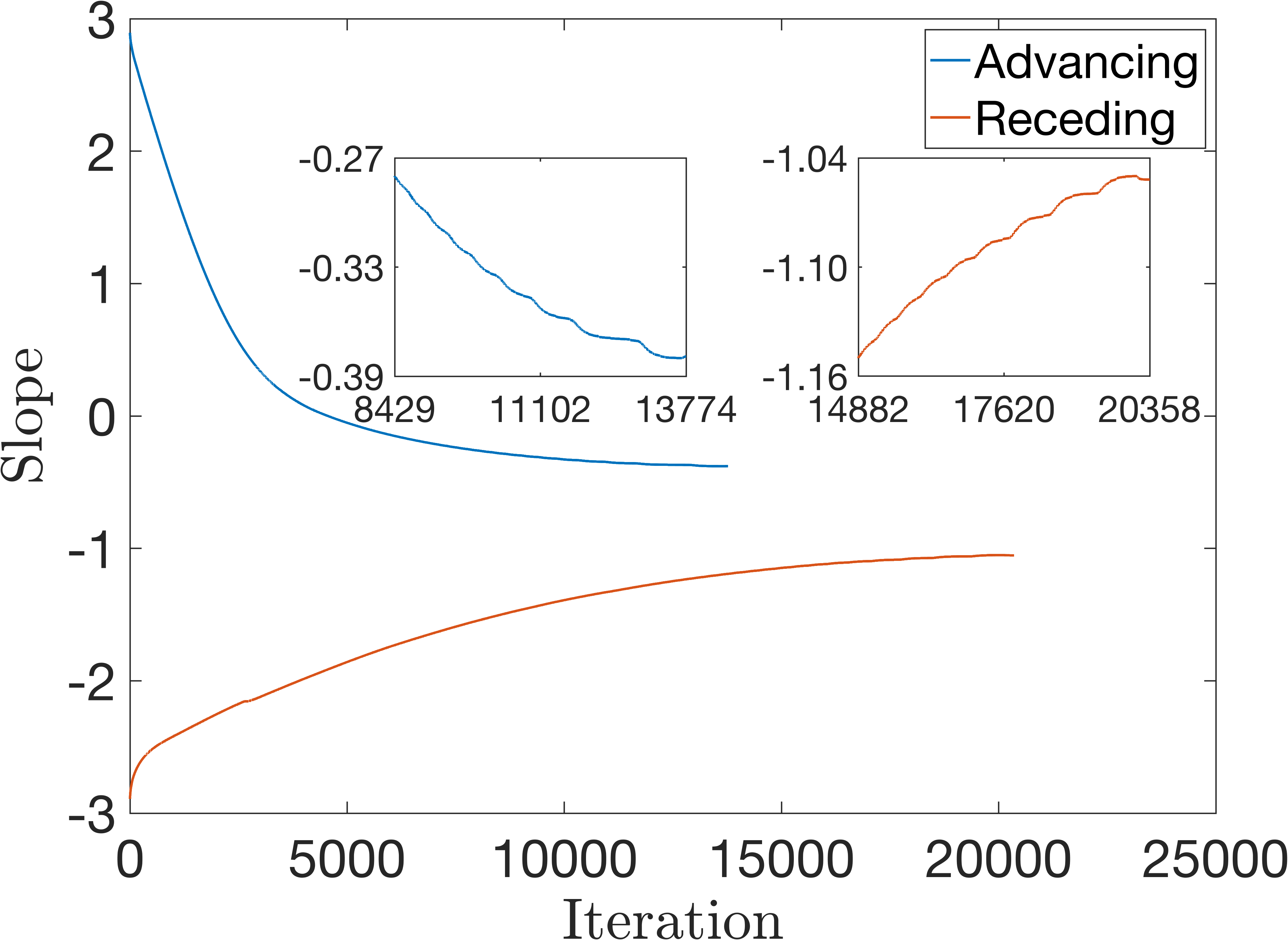}
        \caption{$\arg\mathbf{k}=\arctan(\tfrac{5}{6})$.}
    \end{subfigure}
    \hfill
    \begin{subfigure}{0.49\textwidth}
        \centering
        \includegraphics[width=.8\textwidth]{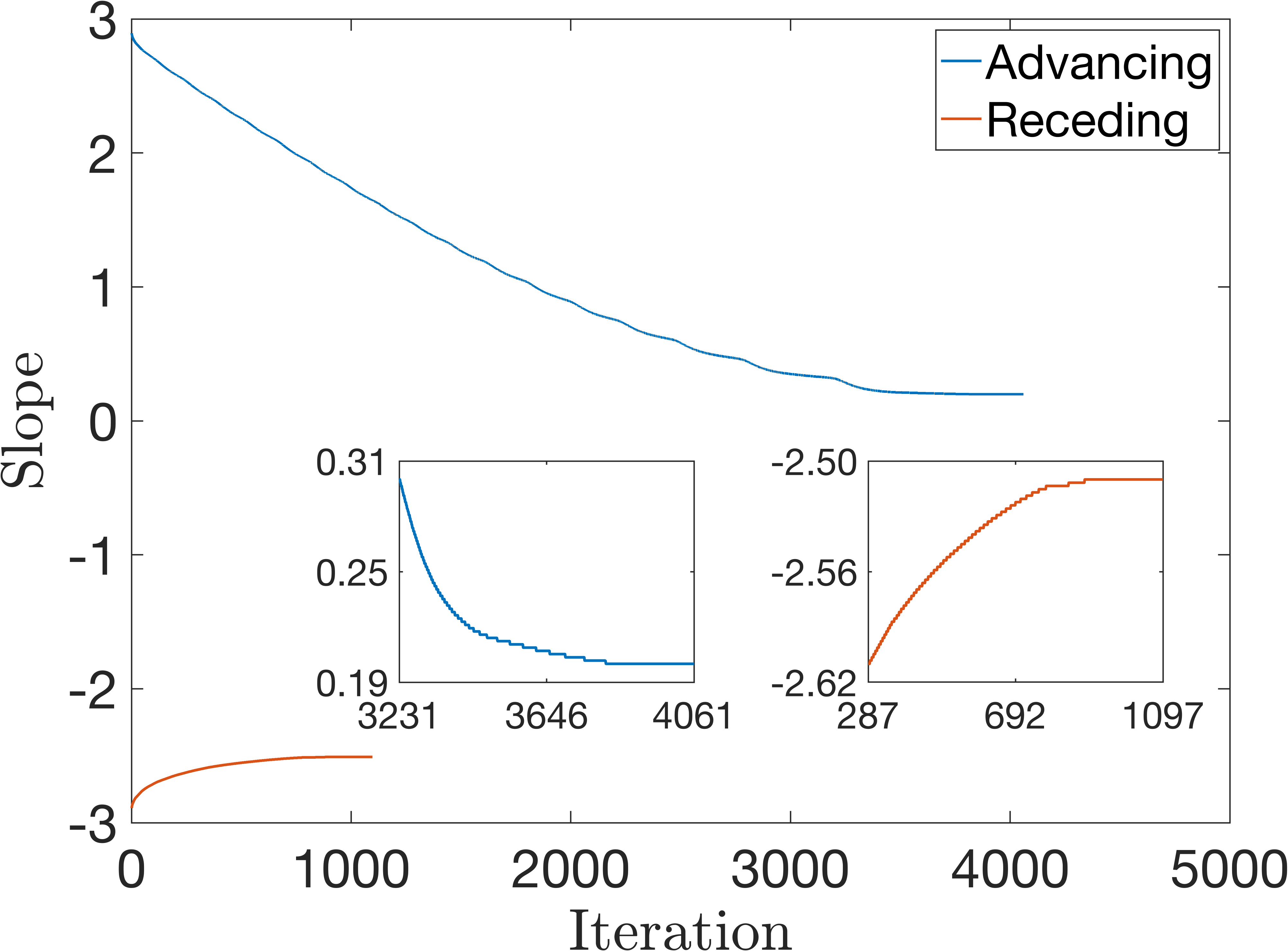}
        \caption{$\arg\mathbf{k}=\tfrac{\pi}{4}$.}
    \end{subfigure}

    \caption{
        Evolution of the macroscopic interface slope \(m\) for two representative contact-line orientations.
        In each panel, the large-scale plot shows the evolution of \(m\) for the advancing and receding processes, while the smaller panels show the corresponding tails near the final values.
        The horizontal axis denotes algorithmic iteration count; the time-step parameters are reduced according to the two-level stopping procedure.
    }
    \label{fig:slope_evolution}
\end{figure}

\subsection{Interface and contact-line structures near the CAH discontinuity} \label{sec:interface_structure}

We next examine the corresponding liquid–vapor interfaces and contact lines to understand the geometric structures associated with the directional CAH. The directions \(\arg\mathbf{k}=\arctan(\tfrac{5}{6})\) and \(\arg\mathbf{k}=\tfrac{\pi}{4}\) are selected to compare the interface configurations near the CAH discontinuity.

\begin{figure} [t]
    \centering
    \begin{subfigure}{0.4\textwidth}
        \centering
        \includegraphics[width=\linewidth]{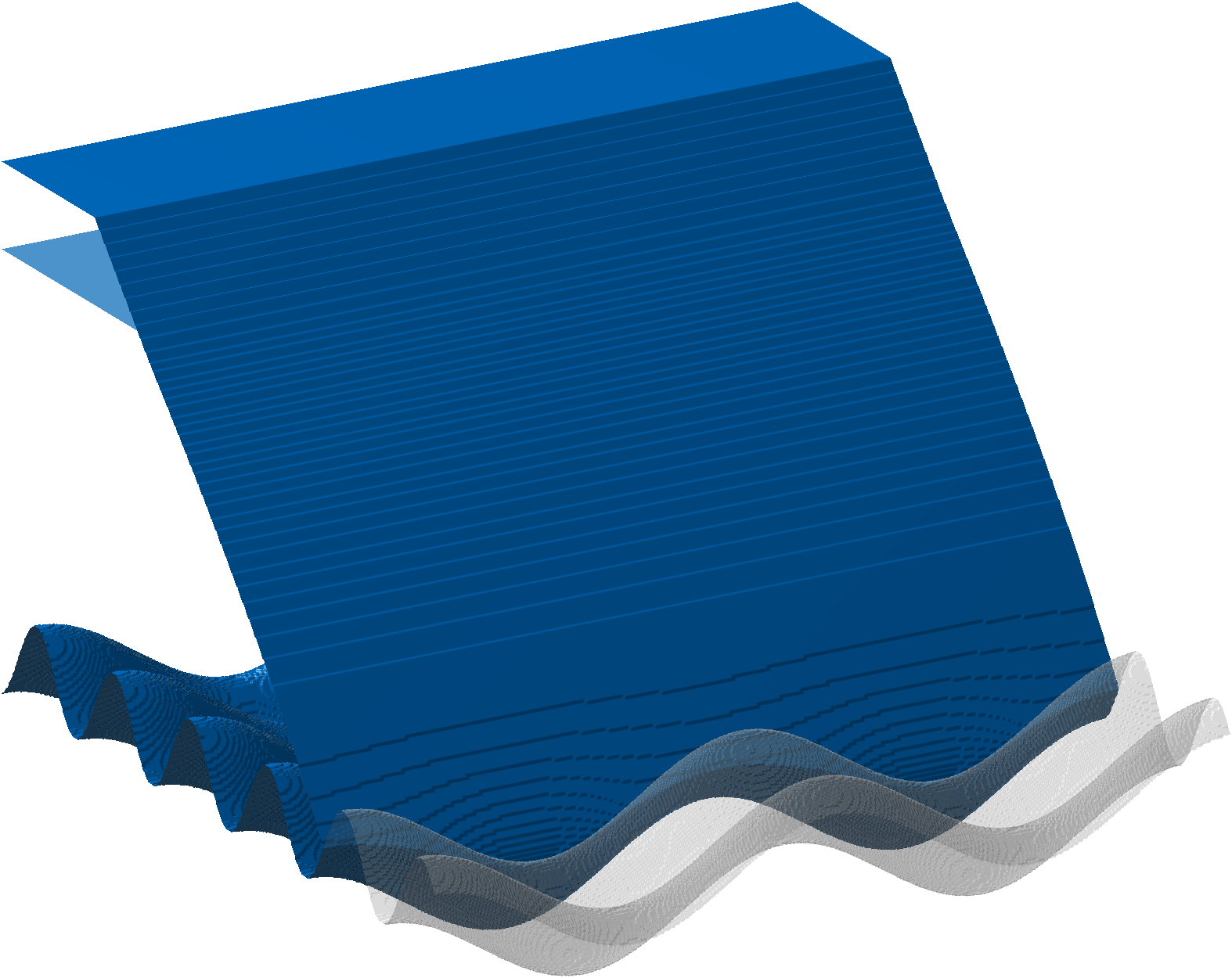}
        \caption{Receding angle, \(\arg \mathbf{k}=\tfrac{\pi}{4}\)}
    \end{subfigure}
        \hspace{1cm}
    \begin{subfigure}{0.4\textwidth}
        \centering
        \includegraphics[width=\linewidth]{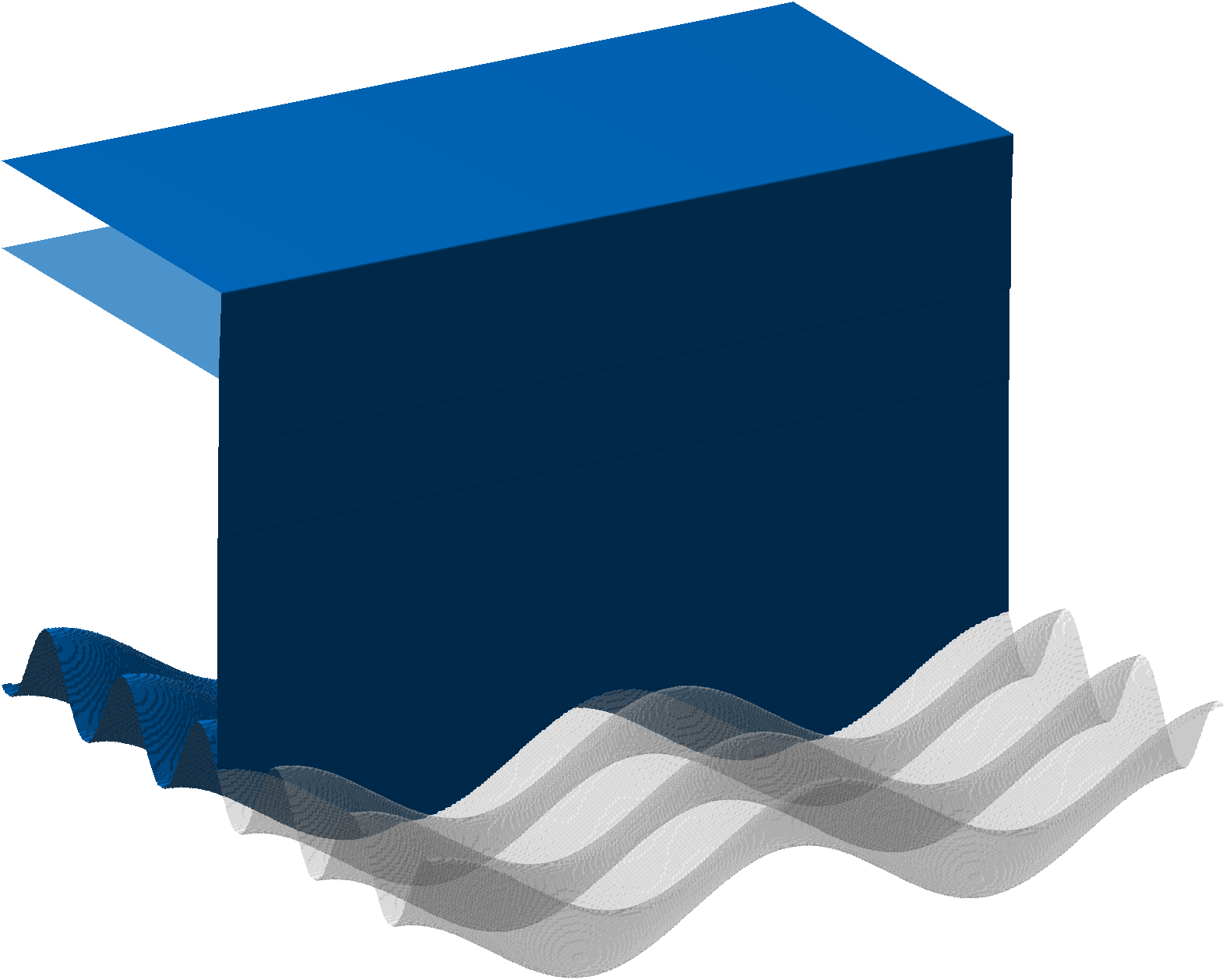}
        \caption{Advancing angle, \(\arg \mathbf{k}=\tfrac{\pi}{4}\)}
    \end{subfigure}\\
    \vspace{10mm}
\begin{subfigure}{0.4\textwidth}
        \centering
        \includegraphics[width=\linewidth, trim={170 230 120 200}, clip]{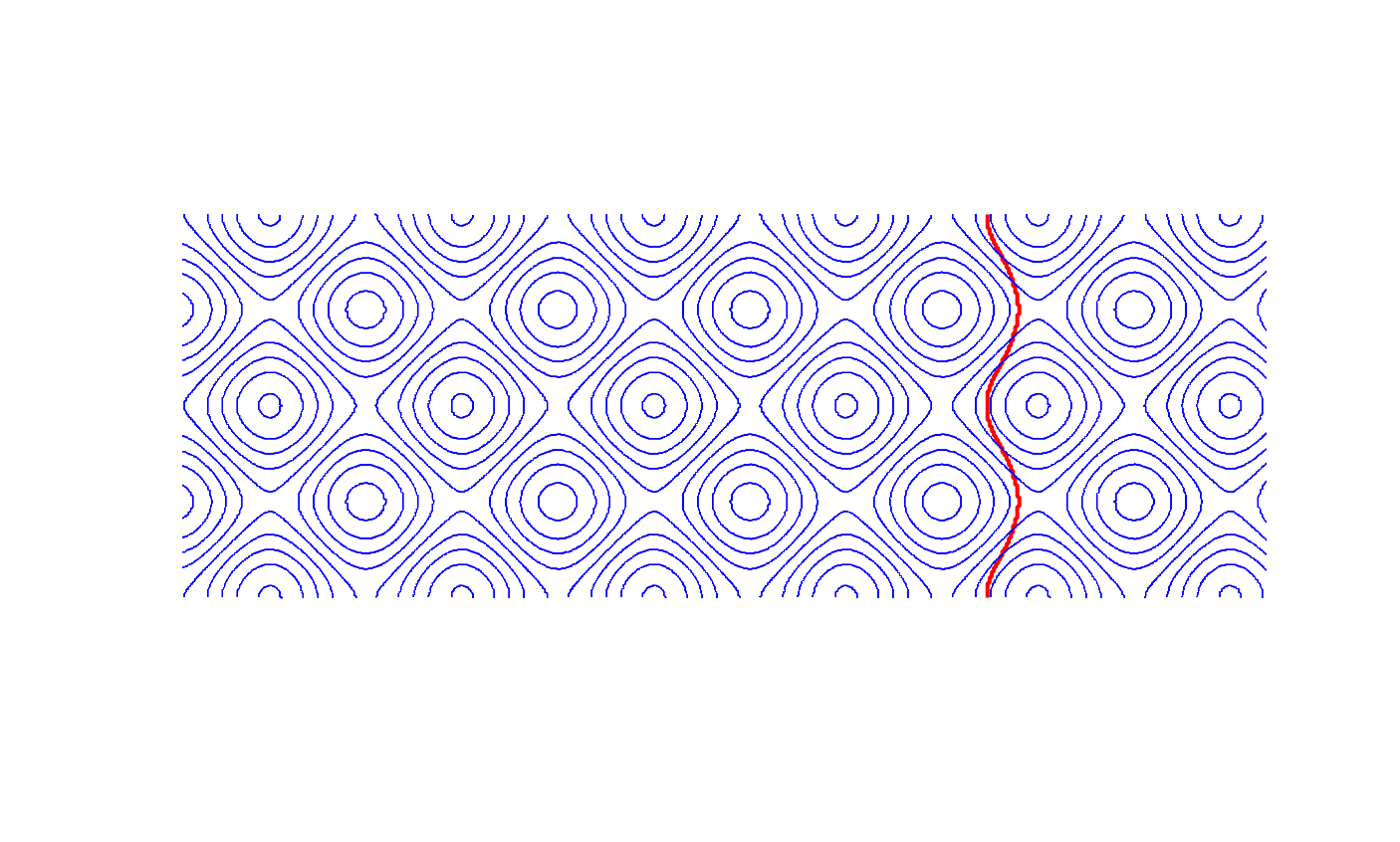}
        \caption{Receding contact line, \(\arg \mathbf{k}=\tfrac{\pi}{4}\)}
    \end{subfigure}
        \hspace{1cm}
    \begin{subfigure}{0.4\textwidth}
        \centering
        \includegraphics[width=\linewidth, trim={170 230 120 200}, clip]{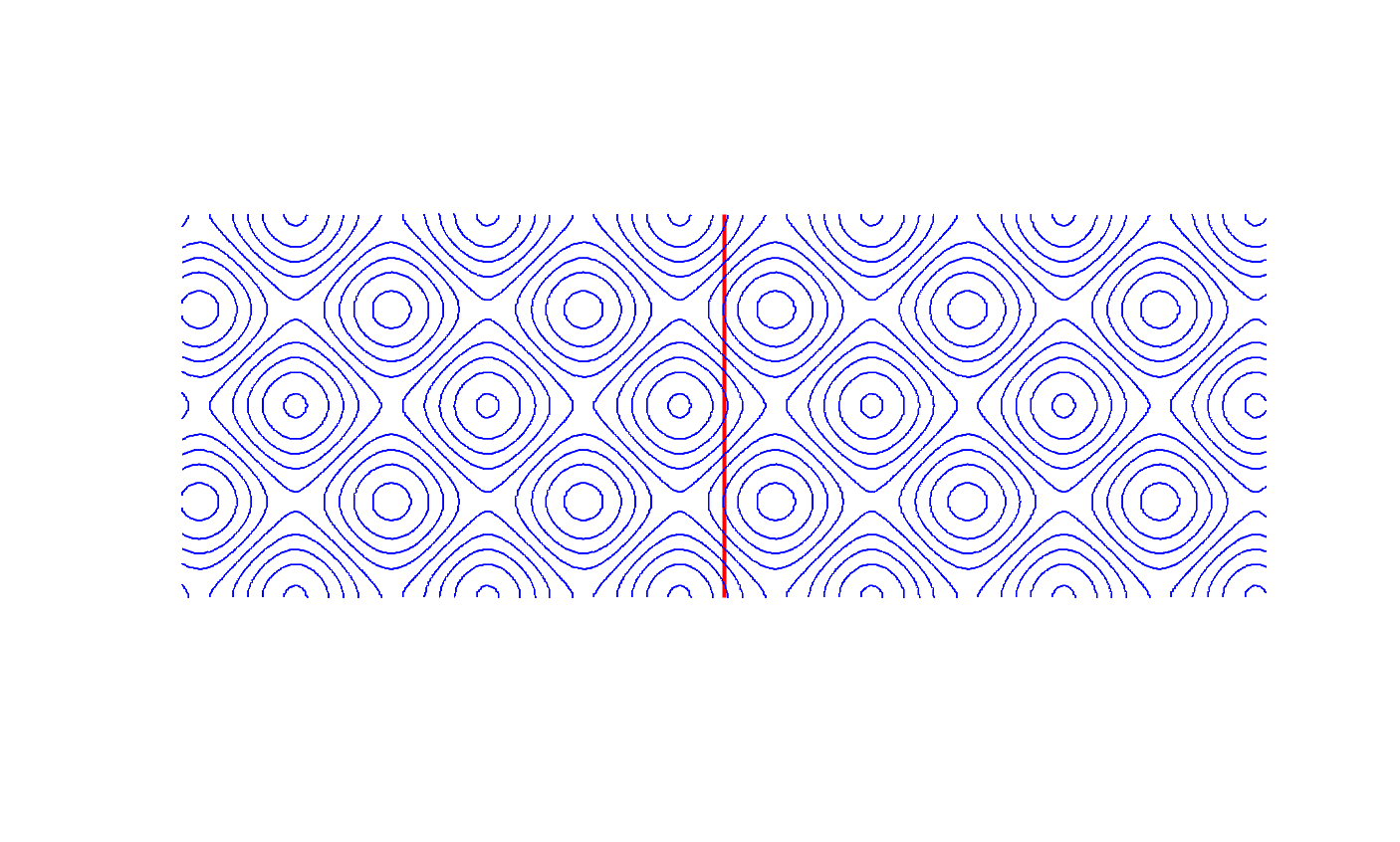}
        \caption{Advancing contact line, \(\arg \mathbf{k}=\tfrac{\pi}{4}\)}
    \end{subfigure}
    \caption{Receding and advancing liquid–vapor interfaces and contact lines for  \(\arg \mathbf{k}=\frac{\pi}{4}\).} \label{fig:interface_pi4}
\end{figure}

The side views of the 3D liquid–vapor interfaces are displayed over both the near- and far-regions; a translucent surface is used in the far-region for clarity.
For all tested directions, the liquid–vapor interfaces have a planar-like behavior as \(z\) increases, in both advancing and receding cases; see Figures~\ref{fig:3D_apparent}, \ref{fig:interface_pi4} and~\ref{fig:interface_5_6}.
This supports the graphical representation \(x=x(y,z)\), with deviations from a planar profile of order \(O(\eta)\), as discussed in Section~\ref{sec:condition_over_far}.

\begin{figure}[t!]
    \centering
    \begin{subfigure}{0.4\textwidth}
        \centering
        \includegraphics[width=\linewidth]{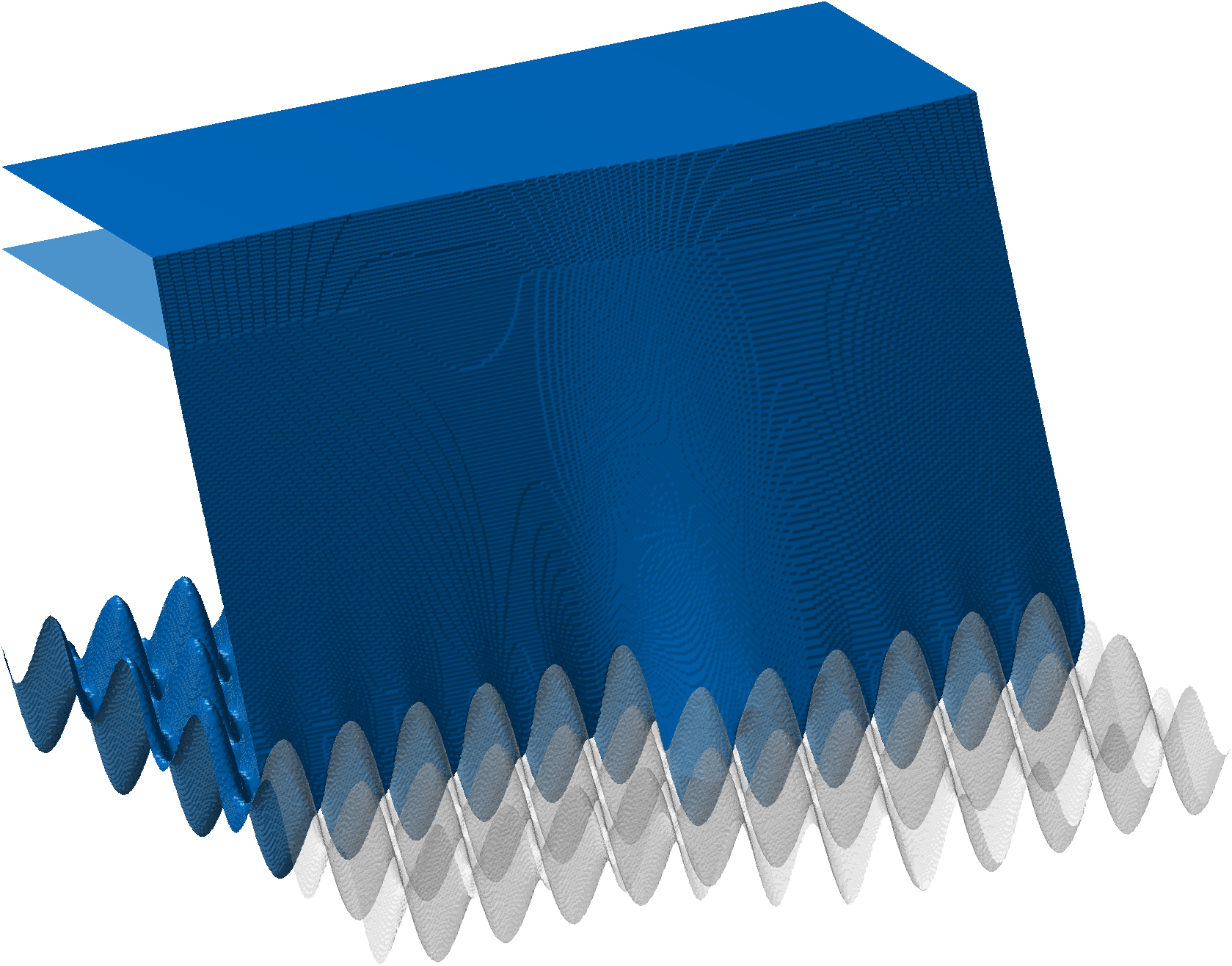}
        \caption{Receding angle, \(\arg \mathbf{k}=\arctan(\tfrac{5}{6})\)}
    \end{subfigure}
    \hspace{1cm}
    \begin{subfigure}{0.4\textwidth}
        \centering
        \includegraphics[width=\linewidth]{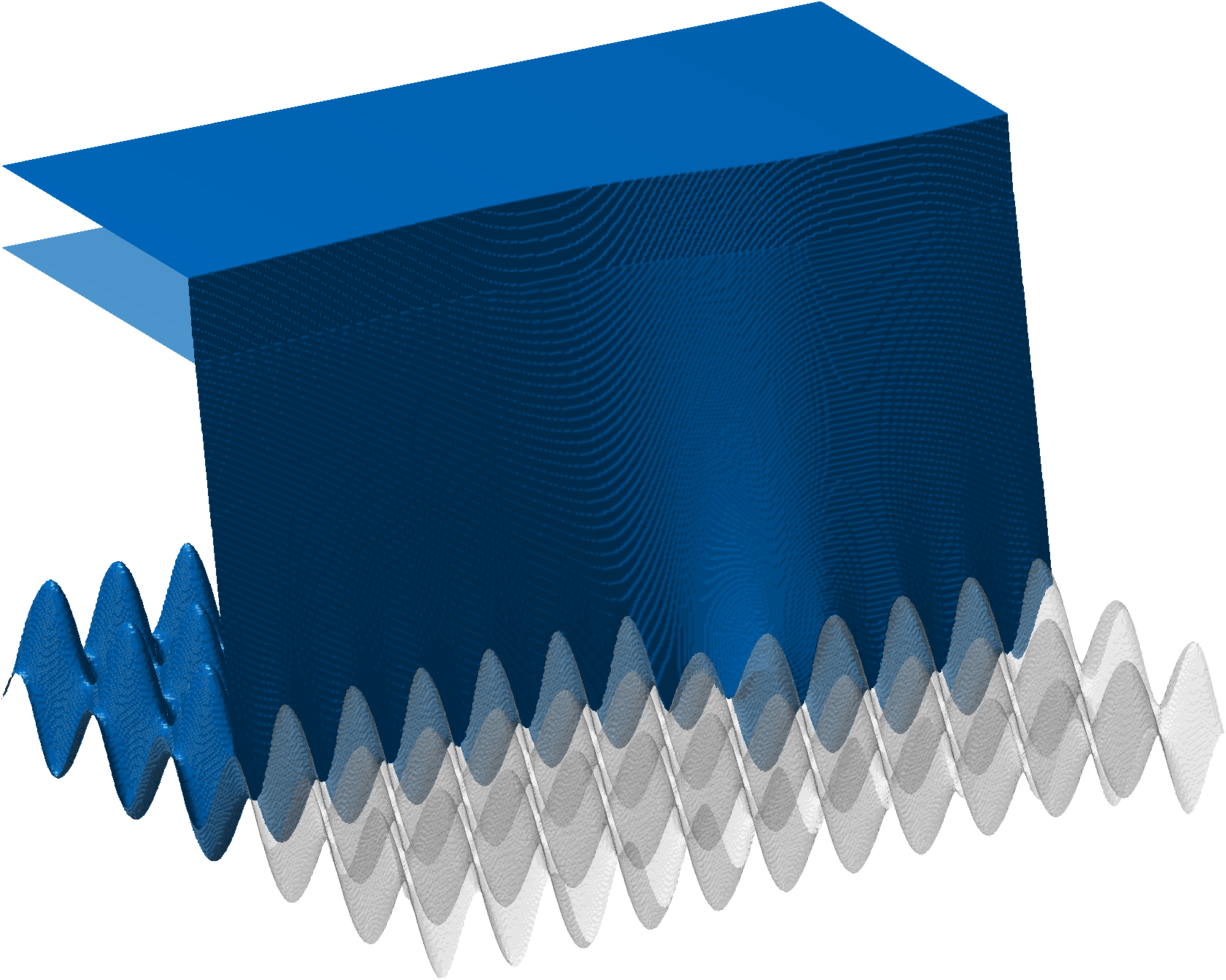}
        \caption{Advancing angle, \(\arg \mathbf{k}=\arctan(\tfrac{5}{6})\)}
    \end{subfigure}
    \caption{Receding and advancing liquid–vapor interfaces for  \(\arg \mathbf{k}=\arctan(\tfrac{5}{6})\).} \label{fig:interface_5_6}
\end{figure}

\begin{figure}[t!]
    \centering
    \begin{subfigure}{0.3\textwidth}
        \centering
        \includegraphics[width=\linewidth]{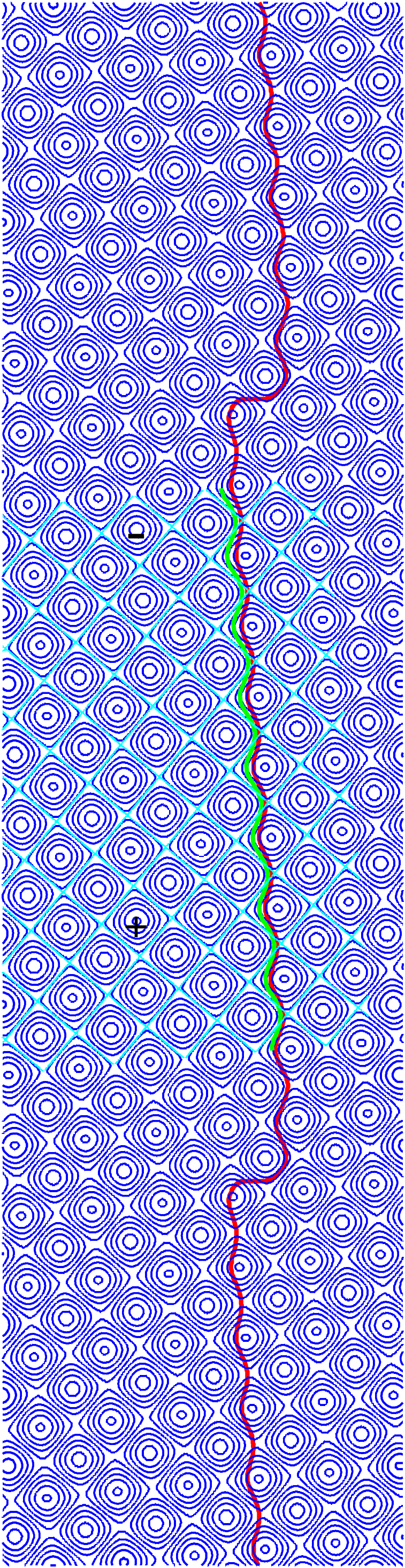}
        \caption{Receding contact lines.}
    \end{subfigure}
    \hspace{1cm}
    \begin{subfigure}{0.3\textwidth}
        \centering
        \includegraphics[width=\linewidth]{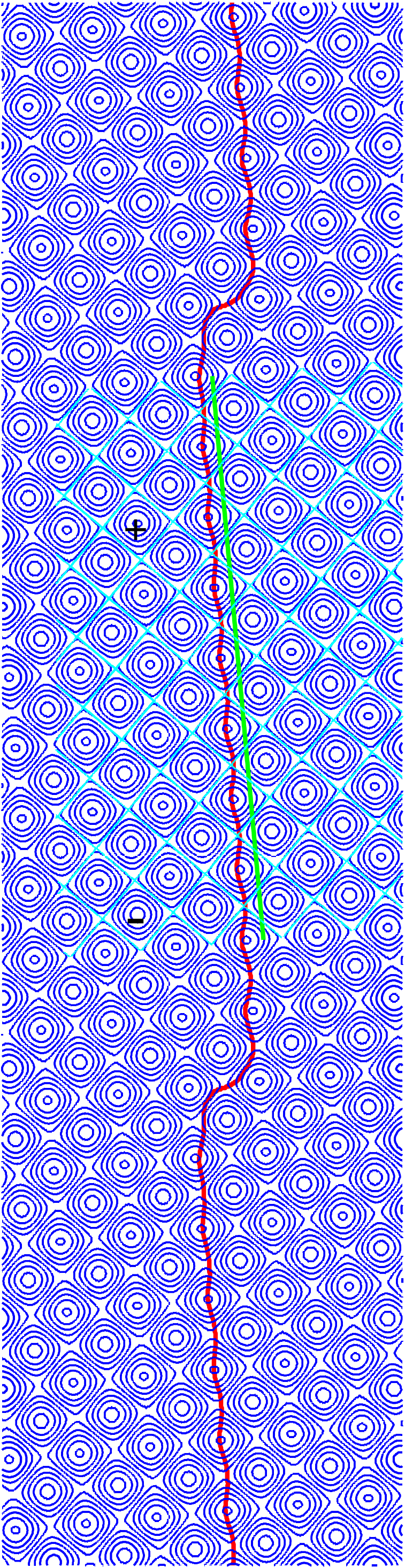}
        \caption{Advancing contact lines}
    \end{subfigure}
    \caption{A comparison of the receding and advancing contact lines for
    \(\arg \mathbf{k}=\arctan(\tfrac{5}{6})\)  (red) and \(\arg \mathbf{k}=\tfrac{\pi}{4}\) (green).} \label{fig:comparison_contact_line}
\end{figure}

The corresponding contact lines are also plotted together with the contour plot of the rough surface.
The local maxima and minima of the surface are marked by plus and minus signs, respectively.
To study the contact-line structure near the CAH discontinuity, we compare two nearby directions, \(\arg\mathbf{k}=\arctan(\tfrac{5}{6})\) and \(\arg\mathbf{k}=\tfrac{\pi}{4} \), in Figure~\ref{fig:comparison_contact_line}.
The contours of the solid surface are also plotted so that both contact lines are shown against the same surface.

Large portions of the two stationary contact lines have nearly the same pattern, while their differences are concentrated in several localized regions.
In particular, the contact line for \(\arg\mathbf{k}=\arctan(\tfrac{5}{6})\) follows the pattern for \(\arg\mathbf{k}=\tfrac{\pi}{4}\) over a large portion before switching to another segment of the same pattern.
This geometry is consistent with a possible stepwise transition, in which most of the contact line remains pinned while a localized segment shifts to a neighboring pattern.
Repeated localized shifts could produce a macroscopic advance resembling the motion of a typewriter print head; see Figure~\ref{fig:typewritter}.
We use this analogy as a geometric interpretation of the computed stationary configurations, rather than as a model of the physical transient dynamics.

\begin{figure}[t]
    \centering
    \begin{subfigure}{0.45\textwidth}
        \begin{tikzpicture}[x={(1cm,0cm)}, y={(0cm,0.1cm)}, z={(0cm,1cm)}, scale=2]

% Cylinder parameters
\def\radius{1}
\def\height{2}

% Draw cylinder surface
\draw[fill=gray!20] (-\radius,0,0) -- (\radius,0,0) -- (\radius,0,\height) -- (-\radius,0,\height) -- cycle;

\draw[fill=gray!10] (0,0,\height) ellipse (\radius cm and 0.1*\radius cm);

\draw[gray, thick, fill=gray!20] (0,0,0) ++(180:\radius cm and 0.1*\radius cm) arc (180:360:\radius cm and 0.1*\radius cm);

\draw[dashed, gray, thick, fill=gray!20] (0,0,0) ++(0:\radius cm and 0.1*\radius cm) arc (0:180:\radius cm and 0.1*\radius cm);

% Spiral around cylinder
\draw[red, thick, samples=200, domain=0:6.283*6, variable=\t, smooth]
    plot ({\radius*cos((\t+3.1415*3/2) r)}, {\radius*sin((\t+3.1415*3/2) r)}, {(\height/(6.283*6))*\t});

\draw[blue, thick] (0,-\radius,2/6*\height) -- (0,-\radius,3/6*\height);

\draw[blue, thick, samples=200, domain=6.283*2:6.283*3, variable=\t, smooth]
    plot ({\radius*cos((\t+3.1415*3/2) r)}, {\radius*sin((\t+3.1415*3/2) r)}, {(\height/(6.283*6))*\t});

\end{tikzpicture}

    \end{subfigure}
    \begin{subfigure}{0.45\textwidth}
        \begin{tikzpicture}[scale=2]

% Cylinder parameters
\def\radius{1}
\def\height{2}
\def\nturns{6}

% Width of rectangle
\def\width{pi*\radius}

% Draw cylinder side (flattened)
\draw[fill=gray!20] (0,0) rectangle (\width,\height);

% Draw spiral
\draw[red, thick] (0.5*\width,0) -- (\width,0.5*\height/\nturns);
\draw[red, thick] (0,5.5*\height/\nturns) -- (0.5*\width,\height) ;

\foreach \i in {0,...,4} {
    \draw[red, thick]
        (0,{\i*\height/\nturns+0.5*\height/\nturns}) -- (\width,{(\i+1)*\height/\nturns+0.5*\height/\nturns});
}

\draw[blue, thick] (0,2.5*\height/\nturns) -- (0.5*\width,3*\height/\nturns);
\draw[blue, thick] (0.5*\width,2*\height/\nturns) -- (0.5*\width,3*\height/\nturns);
\draw[blue, thick] (0.5*\width,2*\height/\nturns) -- (\width,2.5*\height/\nturns);

\end{tikzpicture}
    \end{subfigure}
    \caption{Illustration of the proposed typewriter-like rearrangement in three and two dimensions.
    In the left panel, the \(y\)-periodic \(xy\)-plane is represented as a cylinder by gluing the two boundaries in the \(y\)-direction; the right panel shows the corresponding unwrapped planar representation.
    The red helix, shown as parallel red lines in the unwrapped view, represents the contact-line pattern for the diagonal orientation \(\arg\mathbf{k}=\tfrac{\pi}{4}\).
    The blue curve represents the contact-line pattern for the nearby orientation \(\arg\mathbf{k}=\arctan(\frac{5}{6})\).
    The blue curve follows the diagonal pattern over long portions and then switches locally to a neighboring periodic copy, as indicated by the short connecting segment.
    This localized switching illustrates the proposed typewriter-like geometric transition between the two stationary contact-line configurations.}
    \label{fig:typewritter}
\end{figure}

\subsection{Revisiting the anisotropy effect}

We now return to the anisotropy effect discussed in Section~\ref{s:TheoHystInt} and consider what the numerical observations above suggest about the corresponding wetted-region geometry.

First, the directional dependence of the CAH indicates that the pinning range depends on the contact-line orientation.
In particular, the CAH interval is wider near the diagonal directions, so the contact line can remain pinned over a larger range of contact angles in these directions.
In related settings with Dirichlet boundary conditions and without a volume constraint, a direct connection has been established between jump discontinuities in the hysteresis interval and contact-line faceting.
The corresponding connection is less clear in the volume-constrained setting considered here.
Nevertheless, our numerical results suggest that the strong directional dependence of the CAH can be associated with faceted stationary wetted regions.
For the present surface, the diagonal directions are distinguished by the symmetry of the surface, and the square-like stationary wetted region obtained with Surface Evolver has its sides aligned with these directions; see Figure~\ref{fig:droplet_sideview}.

Second, the localized structure of the contact lines described in Section~\ref{sec:interface_structure} constrains the geometry of the wetted set.
Together with the wider CAH interval near the diagonal directions, that behavior is consistent with a contact line that contains long portions aligned with the diagonal directions while changing through localized transitions.
This provides a geometric interpretation of the square-like wetted region shown in Figure~\ref{fig:droplet_sideview}.

\section{Discussion} \label{s:Disc}

We introduced a two-scale alternating method for approximating directional CAH on periodic rough surfaces.
The method couples an MBO diffusion-generated update near the contact line with a Fourier-based approximation of the minimal surface in the far region.
For an idealized reference iteration, we proved decay of an approximate interfacial energy.
This result, together with the heuristic energy-slope relation, provides an explanation for the large-scale monotone behavior of the macroscopic slope \(m\) observed in the numerical iteration.

For the representative surface \(z=\eps\sin\frac{x}{\eps}\sin\frac{y}{\eps}\), the computed CAH interval has a clear directional dependence and is widest along the diagonal direction \(\arg\mathbf{k}=\tfrac{\pi}{4}\); the remaining diagonal directions follow by the symmetry of the surface.
The corresponding far-region interfaces are planar with \(O(\eta)\) correction, consistent with the graph-based approximation used in the method.
Near the diagonal orientation, differences between stationary contact lines are spatially localized, giving a typewriter-like geometric interpretation of their transition.

Beyond characterizing a given surface, the computed interval constrains which macroscopic wetted regions that surface can support.
A stationary droplet must meet the solid at an apparent angle lying in \([\thetarec(\mathbf{k}),\thetaadv(\mathbf{k})]\) at every point of its contact line, so the directional interval is a necessary condition on the shape that contact line can take.
For the surface studied here the interval is widest at \(\arg\mathbf{k}=\tfrac{\pi}{4}\), and a square-like wetted set with sides along the diagonals is correspondingly admissible: the contact angles realized by the droplet computed with Surface Evolver in Figure~\ref{fig:droplet_sideview} lie inside the computed interval along the entire contact line.
The anisotropy of the hysteresis interval, computed once for a given micropattern, therefore predicts which droplet shapes that pattern can hold, which is the question that motivates the design of micropatterned surfaces.

These conclusions are subject to several limitations.
The computations use approximate far-field boundary conditions, a single doubly periodic surface, and a finite set of rational orientations.
The observed directional changes should therefore be interpreted as numerical evidence rather than as a general proof of discontinuity.
Extensions to additional roughness geometries, nonperiodic surfaces, and broader parameter ranges would help determine the robustness of the observed behavior.

We think there are several interesting extensions of this work.
The formulation of Section~\ref{sec:Intro} allows a spatially varying Young's angle \(\theta_Y(\mathbf{x})\), and the near-region MBO step accommodates it without modification, so the same two-scale decomposition should apply to chemically patterned surfaces, where the pinning mechanism is well understood but the anisotropy of the resulting interval is not.
It is a challenging extension to include the volume constraint to directly compute stationary droplets on rough surfaces.
On the theoretical side, the closed-interval conjecture of Section~\ref{s:TheoHystInt} remains open in the non-graphical setting considered here, as does the question of whether the sharp directional change observed at \(\tfrac{\pi}{4}\) is a genuine discontinuity; in related exactly solvable models both properties can be established \cite{feldman2021limit,feldman2019free}, which makes them plausible targets.
Finally, the alternating structure used here --- a finely resolved update near the contact line coupled to an explicitly solvable far-field problem --- is not specific to capillarity, and may be useful for other free boundary problems in which the boundary condition is generated at a scale far below that of the solution.

\section*{Acknowledgements}
Funding: B.O. and Z.L. were supported by the NSF DMS-2136198 and DMS-2513175. W.F. and Z.L. were supported by the NSF DMS-2407235.

\section*{Data availability}
The MATLAB implementation of the two-scale alternating method, together with the
computed contact angle hysteresis data reported in Section~\ref{s:NumExp}, is
openly available at \cite{TSAcode}.

\section*{Declaration of Generative AI and AI-assisted technologies in the writing process}
During the preparation of this work the authors used Claude (Anthropic) in order to edit and improve the language, presentation, and \LaTeX{} formatting of the manuscript. After using this tool, the authors reviewed and edited the content as needed and take full responsibility for the content of the publication. No generative AI tools were used in the development of the mathematical analysis, numerical methods, the implementation, or the generation and analysis of the reported results.

\bibliographystyle{elsarticle-num}
\bibliography{references}

\appendix

\section{Derivation of the MBO scheme for a capillary problem without volume constraint} \label{sec:Derivation_MBO}
Based on a variation of the MBO diffusion-generated method for a capillary problem on a rough surface \cite{Wang_2019}, we first provide a derivation for the choice of threshold in \eqref{iter:threshold}.

Suppose we solve problem~\eqref{prob:relaxed_wetting} using an iterative method.
In the $k$-th step, we have an approximated solution $(u^k_1,u^k_2)$.
The energy functional $\mathcal{E}^{\tau} (u_1,u_2; \Omega_{\mathrm{near}}) $ can be linearized near the point $(u^k_1,u^k_2)$ as in \eqref{equ:expansion_interface_energy}.
Then we minimize the linearized energy \(\mathcal{L}\) in \eqref{equ:expansion_interface_energy} and set $(u_1^{k+1},u_2^{k+1})$ as the solution of
\begin{align} \label{prob:linearized}
\min_{(u_1,u_2)\in \mathcal{K}} {\mathcal{L}}(u_1,u_2, u^k_1,u^k_2; \Omega_{\mathrm{near}}).
\end{align}
\begin{lemma} \label{lem:iter_sol}
Denote
$\phi=\frac{1}{\sqrt{\tau_1}} \,G_{\tau_1} \ast (u^k_2-u_1^k) -\frac{\cos{\theta_Y}}{\sqrt{\tau_2}} \,G_{\tau_2} \ast  \mathbf{1}_{S}-\frac{\cos{\theta_i}}{\sqrt{\tau_3}} \,G_{\tau_3} \ast  \mathbf{1}_{\mathcal{I}}$.
Let
$L^{k+1} = \{\mathbf{x}\in \Omega_{\mathrm{near}}\setminus (S\cup\mathcal{I})\colon  \phi<0\}$ and define $V^{k+1}=\Omega_{\mathrm{near}}\setminus (L^{k+1} \cup S\cup\mathcal{I})$.
Then $(u_1^{k+1},u_2^{k+1})= (\mathbf{1}_{L^{k+1}},\mathbf{1}_{V^{k+1}})$ is a solution to \eqref{prob:linearized}.
\end{lemma}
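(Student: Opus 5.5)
The plan is to reduce the linear minimization \eqref{prob:linearized} to a pointwise problem and then identify the pointwise minimizer with the sign of $\phi$.

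\emph{Step 1: substitute the constraint.} On $C_{\mathrm{near}}$ we have $u_2 = 1-u_1$. Inserting this into \eqref{equ:linearied_interface_energy} gives
\[
\mathcal{L}(u_1,u_2,u_1^k,u_2^k) = \mathrm{const} + \sqrt{\pi}\int_{C_{\mathrm{near}}} u_1\,\Psi \,d\mathbf{x},
\]
where the constant does not depend on $u_1$ and
\[
\Psi = \frac{\gamma_{LV}}{\sqrt{\tau_1}}G_{\tau_1}\ast(u_2^k-u_1^k) + \sum_{j=2,3}\frac{\gamma_{S_jL}-\gamma_{S_jV}}{\sqrt{\tau_j}}\,G_{\tau_j}\ast\mathbf{1}_{S_j}.
\]

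\emph{Step 2: identify $\Psi$ with $\phi$.} Young's relation $\gamma_{S_jV}-\gamma_{S_jL} = \gamma_{LV}\cos\theta_j$ holds with $\theta_2=\theta_Y$ and $\theta_3=\theta_i$; for $j=3$ this is the definition of the imaginary densities. Using it, each coefficient becomes $\gamma_{S_jL}-\gamma_{S_jV} = -\gamma_{LV}\cos\theta_j$, and so $\Psi = \gamma_{LV}\,\phi$. Since $\gamma_{LV}>0$, the sign of $\Psi$ agrees with the sign of $\phi$ at every point.

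\emph{Step 3: minimize pointwise.} The functional $u_1\mapsto\int u_1\,\gamma_{LV}\phi$ is linear, and the admissible values satisfy $u_1\in[0,1]$. It is therefore minimized by taking $u_1=1$ where $\phi<0$ and $u_1=0$ where $\phi>0$. On the set $\{\phi=0\}$ any value in $[0,1]$ is optimal, so choosing $0$ there is allowed. This gives exactly $u_1^{k+1}=\mathbf{1}_{L^{k+1}}$ and $u_2^{k+1}=1-u_1^{k+1}=\mathbf{1}_{V^{k+1}}$ on $C_{\mathrm{near}}=\Omega_{\mathrm{near}}\setminus(S\cup\mathcal{I})$.

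To justify that this choice minimizes over all of $\mathcal{K}$, compare with an arbitrary competitor: $\int (u_1-\mathbf{1}_{\{\phi<0\}})\phi \ge 0$ holds pointwise. Both factors are nonpositive on $\{\phi<0\}$, and both are nonnegative on $\{\phi\ge 0\}$.

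\emph{Main obstacle: admissibility.} The main remaining point is that $\mathcal{K}$ requires BV regularity. The function $\phi$ is smooth, since it is a sum of convolutions with Gaussians. For almost every shift of the threshold level, the sublevel set $\{\phi<0\}$ therefore has finite perimeter in the bounded region $C_{\mathrm{near}}$ by the coarea formula. Alternatively, one notes that the minimization is really over $L^\infty$ functions with values in $[0,1]$, as in \cite[Lemma~2.1]{Wang_2019}, which the text says adapts directly. In either case the lemma follows, and it mirrors the volume-constrained argument with the Lagrange multiplier set to zero.
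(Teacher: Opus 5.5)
Your argument is correct and is essentially the paper's proof. Both rest on the identity $\mathcal{L}(u_1^{k+1},u_2^{k+1},u_1^k,u_2^k)-\mathcal{L}(u_1,u_2,u_1^k,u_2^k)=\sqrt{\pi}\,\gamma_{LV}\int_{\Omega_{\mathrm{near}}}(u_1^{k+1}-u_1)\,\phi\,d\mathbf{x}$, obtained from $u_2=1-u_1$ on $C_{\mathrm{near}}$ and Young's relation for $S$ and $\mathcal{I}$, followed by a sign argument. The only difference is in the last step. The paper first appeals to Lemma~\ref{lem:char_min} to restrict to binary competitors in $\mathcal{B}$, then splits the discrepancy into $A_1\subset\{\phi\ge 0\}$ and $A_2\subset\{\phi<0\}$. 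Your pointwise inequality $(u_1-\mathbf{1}_{\{\phi<0\}})\,\phi\ge 0$ instead covers every $[0,1]$-valued competitor in $\mathcal{K}$ at once, which is slightly more direct and makes the reduction to $\mathcal{B}$ unnecessary.

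One correction to your admissibility paragraph. The coarea formula gives finite perimeter of $\{\phi<t\}$ only for almost every level $t$, so it says nothing about the specific level $t=0$ fixed in the lemma. The paper does not address this point at all. If you want to, use real-analyticity of $\phi$ instead: each term is a Gaussian convolution of a bounded function. Then either $\phi\equiv 0$, or its zero set, which contains $\partial\{\phi<0\}$, has locally finite $\mathcal{H}^2$-measure, so $\{\phi<0\}$ has finite perimeter in the bounded computational cell. Alternatively, simply work with $L^\infty$ competitors as in \cite{Wang_2019}.
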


\begin{proof}
According to Lemma~\ref{lem:char_min}, it suffices to show that
$${\mathcal{L}} (u_1^{k+1},u_2^{k+1},u_1^k, u_2^k; \Omega_{\mathrm{near}})\leq {\mathcal{L}} (u_1, u_2,u_1^k, u_2^k; \Omega_{\mathrm{near}})$$
 for all $(u_1,u_2)\in \mathcal{B}$.
 By definition of $\mathcal{B}$, any $(u_1,u_2) \in \mathcal{B}$ satisfies $u_1=\mathbf{1}_{\hat{D}_1}$ and $u_2=\mathbf{1}_{\hat{D}_2}$ for some open sets $\hat{D}_1,\hat{D}_2 \subseteq {\Omega_{\mathrm{near}}}$ s.t. $\hat{D}_1\cap \hat{D}_2=\emptyset$ and $\hat{D}_1\cup \hat{D}_2=\Omega_{\mathrm{near}} \setminus (S\cup\mathcal{I})$.
 Let $A_1=\hat{D}_1 \setminus L^{k+1}=V^{k+1} \setminus \hat{D}_2$ and $A_2=\hat{D}_2 \setminus V^{k+1}=L^{k+1} \setminus \hat{D}_1$.
 By definition of $L^{k+1},V^{k+1}$ and the fact that $A_1\subset V^{k+1}$ and $A_2\subset L^{k+1}$,
 \[
 \phi(\mathbf{x})\geq 0, \quad\forall \mathbf{x}\in A_1
 \qquad \text{and} \qquad
 \phi(\mathbf{x})<0,\quad \forall \mathbf{x}\in A_2.
 \]
 Therefore, direct calculation yields
 \begin{align*}
 &{\mathcal{L}} (u_1^{k+1},u_2^{k+1},u_1^k, u_2^k; \Omega_{\mathrm{near}})- {\mathcal{L}} (u_1, u_2,u_1^k, u_2^k; \Omega_{\mathrm{near}})\\
 &=\sqrt{\pi}\int_{\Omega_{\mathrm{near}}} (u_1^{k+1}-u_1)\left(\frac{\gamma_{LV}}{\sqrt{\tau_1}} \,G_{\tau_1} \ast u^k_2 +\sum_{j=2,3}\frac{\gamma_{S_jL}}{\sqrt{\tau_j}}\,G_{\tau_j}\ast \mathbf{1}_{S_j}\right) + (u_2^{k+1}-u_2)\left(\frac{\gamma_{LV}}{\sqrt{\tau_1}}\,G_{\tau_1} \ast u^k_1 +\sum_{j=2,3}\frac{\gamma_{S_jV}}{\sqrt{\tau_j}} \,G_{\tau_j} \ast\mathbf{1}_{S_j}\right) \,d\mathbf{x} \\
 &=\sqrt{\pi}\int_{\Omega_{\mathrm{near}}} (u_1^{k+1}-u_1)\left(\frac{\gamma_{LV}}{\sqrt{\tau_1}} \,G_{\tau_1} \ast u^k_2 +\sum_{j=2,3}\frac{\gamma_{S_jL}}{\sqrt{\tau_j}}\,G_{\tau_j}\ast \mathbf{1}_{S_j}\right) - (u_1^{k+1}-u_1)\left(\frac{\gamma_{LV}}{\sqrt{\tau_1}}\,G_{\tau_1} \ast u^k_1 +\sum_{j=2,3}\frac{\gamma_{S_jV}}{\sqrt{\tau_j}} \,G_{\tau_j} \ast\mathbf{1}_{S_j}\right) \,d\mathbf{x} \\
 &=\sqrt{\pi}\gamma_{LV}\int_{\Omega_{\mathrm{near}}} (u_1^{k+1}-u_1)\phi \,d\mathbf{x} \\
 &=\sqrt{\pi}\gamma_{LV}\left(\int_{A_2} \phi \,d\mathbf{x}-\int_{A_1} \phi \,d\mathbf{x}\right)\\
 &\leq 0,
 \end{align*}
 where by definition of $\mathcal{K}$, $u^{k+1}_1+u^{k+1}_2=u_1+u_2=\mathbf{1}_{\Omega_{\mathrm{near}} \setminus (S\cup \mathcal{I})}$ hence $u^{k+1}_1-u_1=-(
u^{k+1}_2-u_2)$.
\end{proof}

Thus, we arrive at the specific choice of threshold $0$.
Next, we present the proof of Theorem~\ref{thm:stablity_wetMBO} to show that the MBO scheme for a capillary problem is stable in the sense that the energy $\mathcal{E}^{\tau}$ is always decreasing for all $\tau_1>0$.
We follow the idea of \cite[Theorem~2.1]{Wang_2019}.

\begin{proof}[Proof of Theorem~\ref{thm:stablity_wetMBO}] \label{proof:stability_analysis}
By Lemma~\ref{lem:iter_sol}, we have the  inequality,
$\mathcal{L}(u^{k}_1,u^{k}_2,u^{k}_1,u^{k}_2; \Omega_{\mathrm{near}})\geq \mathcal{L}(u^{k+1}_1,u^{k+1}_2,u^{k}_1,u^{k}_2; \Omega_{\mathrm{near}})$, where by definition of $\mathcal{L}$, the LHS and RHS could be further written as
\begin{align*}
  &\mathcal{E}^{\tau}(u^{k}_1,u^{k}_2; \Omega_{\mathrm{near}}) + \frac{\sqrt{\pi}\gamma_{LV}}{\sqrt{\tau_1}} \int_{\Omega_{\mathrm{near}}} u^{k}_1 \,G_{\tau_1} \ast u^{k}_2 \,d\mathbf{x}  \\
  &\geq \mathcal{E}^{\tau}(u^{k+1}_1,u^{k+1}_2; \Omega_{\mathrm{near}})+ \frac{\sqrt{\pi}\gamma_{LV}}{\sqrt{\tau_1}} \left[\int_{\Omega_{\mathrm{near}}} u^{k+1}_1 \,G_{\tau_1} \ast u^{k}_2 \,d\mathbf{x} \right.+\left.\int_{\Omega_{\mathrm{near}}} u^{k+1}_2 \,G_{\tau_1} \ast u^{k}_1 \,d\mathbf{x}-\int_{\Omega_{\mathrm{near}}} u^{k+1}_1 \,G_{\tau_1} \ast u^{k+1}_2 \,d\mathbf{x}\right].
\end{align*}
Rearrangement yields that
\begin{align*}
     \mathcal{E}^{\tau}(u^{k}_1,u^{k}_2; \Omega_{\mathrm{near}})
     \ - \ &
     \mathcal{E}^{\tau}(u^{k+1}_1,u^{k+1}_2; \Omega_{\mathrm{near}}) \\
    &\geq \frac{\sqrt{\pi}\gamma_{LV}}{\sqrt{\tau_1}} \left[\int_{\Omega_{\mathrm{near}}} u^{k+1}_1 \,G_{\tau_1} \ast u^{k}_2 \,d\mathbf{x} -\int_{\Omega_{\mathrm{near}}} u^{k}_1 \,G_{\tau_1} \ast u^{k}_2 \,d\mathbf{x}\right.
    +\left. \int_{\Omega_{\mathrm{near}}} u^{k+1}_2 \,G_{\tau_1} \ast u^{k}_1 \,d\mathbf{x}-\int_{\Omega_{\mathrm{near}}} u^{k+1}_1 \,G_{\tau_1} \ast u^{k+1}_2 \,d\mathbf{x}\right] \\ &=-\frac{\sqrt{\pi}\gamma_{LV}}{\sqrt{\tau_1}}\int_{\Omega_{\mathrm{near}}} (u^{k+1}_1-u^k_1)\,G_{\tau_1} \ast(u_2^{k+1}-u^k_2) \,d\mathbf{x}.
\end{align*}
By definition of $\mathcal{K}$,
$u^{k+1}_1+u^{k+1}_2=u^{k}_1+u^{k}_2  =\mathbf{1}_{\Omega_{\mathrm{near}} \setminus (S\cup \mathcal{I})}$,
and
\begin{align*}
    -\frac{\sqrt{\pi}\gamma_{LV}}{\sqrt{\tau_1}}\int_{\Omega_{\mathrm{near}}} (u^{k+1}_1-u^k_1)\,G_{\tau_1} \ast(u_2^{k+1}-u^k_2) \,d\mathbf{x}
    \ = \
    \frac{\sqrt{\pi}\gamma_{LV}}{\sqrt{\tau_1}}\int_{\Omega_{\mathrm{near}}} (u^{k+1}_1-u^k_1)\,G_{\tau_1} \ast(u_1^{k+1}-u^k_1) \,d\mathbf{x}
    \ \geq \  0.
\end{align*}
Therefore,
$ \mathcal{E}^{\tau}(u^{k}_1,u^{k}_2; \Omega_{\mathrm{near}})-\mathcal{E}^{\tau}(u^{k+1}_1,u^{k+1}_2; \Omega_{\mathrm{near}}) \geq 0$,
as desired.
\end{proof}

\section{Justification of the linearization far from the boundary}
\label{sec:Justification_linearization}

In this section, we provide justifications for the graphicality assumption of the liquid--vapor interface away from the contact line in Section~\ref{sec:condition_over_far}.

\begin{proposition}
\label{prop:regularity-of-planelikes}
Suppose that \(L\) is a surface-area minimizer in \(\mathbb{R}^3_+\) which is sandwiched between two parallel planes of slope \(m\), corresponding to a contact angle
\(\theta\in(0,\pi)\), i.e.,
\[
    \{x<mz-C_0\}
    \subset L
    \subset
    \{x<mz+C_0\}
    \qquad \hbox{in } z\geq 0,
\]
and suppose that \(L\) is \(T\)-periodic in the \(y\)-variable. Then, for \(z_\ast=C_2T\), \(L\) is a subgraph in the \((y,z)\) variables of the form
\[
    L\cap\{z\geq z_\ast\}
    =
    \{(x,y,z):x<X(y,z)\}
    \qquad \hbox{in } z\geq z_\ast.
\]
The function \(X\) is smooth, \(T\)-periodic in the \(y\)-variable, and solves the graphical minimal-surface equation
\[
    (1+X_y^2)X_{zz}
    -2X_yX_zX_{yz}
    +(1+X_z^2)X_{yy}
    =0
    \qquad \hbox{in } z\geq z_\ast.
\]
Moreover, there exists \(s\in[-C_0,C_0]\) such that
\[
    \left|X(y,z)-(mz+s)\right|
    \leq
    C_1e^{-c(z-z_\ast)/T}
    \qquad \hbox{in } z\geq z_\ast.
\]
\end{proposition}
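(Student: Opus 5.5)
The plan has three stages: get flatness at large scales from the sandwich condition, upgrade flatness to graphicality with Allard/De Giorgi-type regularity, and then obtain exponential decay from a Harnack-type oscillation argument for the periodic graph.

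\emph{Step 1 (flatness at scale $z$).} Fix a point $p=(x_0,y_0,z_0)\in\partial L$ with $z_0\geq z_\ast$ and consider the ball $B_{\rho}(p)$ with $\rho=z_0/2$, so that the ball stays inside $\{z>0\}$ and away from the solid. Inside this ball $\partial L$ is an area-minimizing boundary contained in the slab $\{|x-mz|\leq C_0\}$. This slab has width $2C_0/\sqrt{1+m^2}$ in the normal direction, which relative to $\rho$ is $O(C_0/z_0)$. The flatness excess is therefore small once $z_0\gg C_0$. By De Giorgi's $\varepsilon$-regularity theorem (or Allard, using the height bound together with a density bound from the monotonicity formula), $\partial L\cap B_{\rho/2}(p)$ is then a smooth graph over the plane $\{x=mz\}$, with $C^{1,\alpha}$ norm controlled by $C_0/\rho$. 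Since the normal to that plane has nonzero $x$-component, the surface is also a graph $x=X(y,z)$. Standard Schauder bootstrapping gives smoothness, and the minimal surface equation follows because $\partial L$ is area-minimizing. Periodicity of $X$ follows from periodicity of $L$ and uniqueness of the graph representation. The constants depend on $T$ only through $z_\ast$. To get $z_\ast=C_2T$ rather than $C_2C_0$, I would note that one may first assume $C_0\lesssim T$. If instead $C_0\gg T$, I would use periodicity to show the minimizer is trapped in a slab of width $O(T)$. This is done by comparing $L$ with its translates by multiples of $T$ in $y$ combined with shifts in $x$, in the style of Caffarelli--de la Llave plane-like minimizer arguments. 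I expect this to be the delicate point.

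\emph{Step 2 (linear equation for the deviation).} Set $w=X-mz$. By Step 1, $|\nabla w|$ and $|D^2w|$ are small for $z\geq z_\ast$, with $|D^2 w|\lesssim C_0/z$. The function $w$ solves a linear uniformly elliptic equation $a^{ij}(y,z)\partial_{ij}w=0$ whose coefficients are $C^{\alpha}$-close to those of $\partial_{zz}+(1+m^2)\partial_{yy}$; this matches the linearization in Theorem~\ref{thm:aprox_channel_min_upper_interface}. The function $w$ is also bounded by $C_0$ and $T$-periodic in $y$.

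\emph{Step 3 (exponential decay via oscillation decay).} Define $\omega(z)=\operatorname{osc}_{[z,\infty)\times[0,T]}w$. Apply the Harnack inequality on boxes of size $\sim T$ to $w-\inf w$ and $\sup w-w$, using periodicity to cover a full period in $y$ with a fixed number of overlapping balls. This gives $\omega(z+T)\leq(1-\delta)\omega(z)$ for a universal $\delta>0$. Iterating yields $\omega(z)\leq C_0(1-\delta)^{(z-z_\ast)/T}$. Hence $w$ converges to a constant $s$ with $|w-s|\leq C_1e^{-c(z-z_\ast)/T}$, and $|s|\leq C_0$ because of the sandwich condition. To make the maximum-principle step legitimate on the half-cylinder, I would use boundedness of $w$ together with a Phragmén--Lindelöf-type argument. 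Of the whole proof, the main obstacle is the slab-width reduction in Step 1; the remaining steps are standard.
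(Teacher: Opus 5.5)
Your Steps 1--3 follow the same route as the paper. De Giorgi $\varepsilon$-regularity in $B_{z_p/2}(p)$, with flatness of order $C_0/z_p$, gives graphicality. The deviation $X-mz$ then solves a linear, uniformly elliptic equation with $y$-periodic coefficients, and periodicity produces an exponential boundary layer. The only difference is in the last step. The paper writes the equation in divergence form, $-\nabla\cdot(A\nabla v)=0$ with $A=\int_0^1 DF(q_0+t\nabla v)\,dt$, and cites a boundary-layer lemma for periodic half-space problems. You instead use the nondivergence form, which holds verbatim for $w$ because $mz$ is affine, and you prove the decay directly: Harnack oscillation decay over each period in $z$, plus the maximum principle on the half-cylinder. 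Your version is self-contained and is essentially how such a lemma is proved; the paper's is shorter by citation.

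The step you call the main obstacle is not needed and should be dropped. In the paper the constants may depend on $C_0$ and $m$, and $C_2$ is simply taken large depending on $C_0/T$. Then $z_\ast=C_2T\geq C\,C_0/\delta_0$ is immediate, and Step 1 goes through with no slab reduction.

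The reduction as you sketch it would also not work. Minimality in the open half-space gives no information at $z=0$, where $\partial L$ may oscillate on the scale $C_0$. For example, the subgraph of a $y$-periodic minimal graph with boundary values $C_0\sin(2\pi y/T)$ on $\{z=0\}$ satisfies all the hypotheses. So no slab of width $O(T)$ can hold down to $z=0$. A Caffarelli--de la Llave sliding argument also breaks down here: the first contact between $L$ and its $x$-translates may occur at $z=0$, where the strong maximum principle is unavailable.
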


Note that Proposition~\ref{prop:regularity-of-planelikes} represents the interface by the full graph \(X=X(y,z)\), which is denoted by \(x=x(y,z)\) in Section~\ref{sec:condition_over_far}. The function
\[
    v(y,z):=X(y,z)-mz
\]
used below denotes the bounded deviation of the interface from the reference plane. The plane \(mz\) differs from the plane \(m(z-R)\) used in Section~\ref{sec:condition_over_far} only by an additive constant.

The outline of the proof is as follows. First, we apply the De Giorgi \(\eps\)-regularity theorem for flat codimension-one minimal surfaces to show that \(L\) is graphical for
\(z\geq z_\ast\). 
A particular consequence is that \(X\) solves the graphical
minimal-surface equation.
The deviation \(v=X-mz\) then satisfies a linear divergence-form equation with \(T\)-periodic coefficients, so we can apply a general result on the exponential boundary layer of solutions of elliptic equations in a periodic half-space.

First, we recall De Giorgi's \(\eps\)-regularity theorem. A proof of this theorem can be found in
\cite[Theorem 5.1]{Savin_2009}.

\begin{theorem}[De Giorgi]
\label{t.dg}
If \(L\) is a perimeter minimizer in
\(B_1\subset\mathbb{R}^d\), \(d\geq2\), and
\[
    \{x_d\leq-\delta\}
    \subset L
    \subset
    \{x_d\leq\delta\}
    \qquad \hbox{in } B_1,
\]
with \(\delta\leq\delta_0(d)\), then \(L\) is a smooth subgraph in
\(B_{1/2}\), i.e.,
\[
    L\cap B_{1/2}
    =
    \{(x',x_d)\in B_{1/2}:x_d\leq w(x')\},
\]
where \(w\) is \(C^\infty\) and solves the graphical minimal-surface equation
\[
    -\nabla\cdot
    \left(
        \frac{\nabla w}{\sqrt{1+|\nabla w|^2}}
    \right)
    =0
    \qquad \hbox{in } B_{1/2}',
\]
and
\(
    |\nabla w|\leq C(d)\delta.
\)
\end{theorem}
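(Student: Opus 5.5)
We propose to follow Savin's viscosity approach via \emph{improvement of flatness}, which uses only the minimality of $L$ (comparison with barriers) and the flatness hypothesis $\{x_d\le-\delta\}\subset L\subset\{x_d\le\delta\}$ in $B_1$. The key intermediate statement is the following. There are universal constants $\rho\in(0,1)$ and $\delta_0(d)$ such that, if $\partial L$ is trapped in a slab of width $\delta\le\delta_0$ around a hyperplane in $B_r$, then $\partial L$ is trapped in a slab of width $\delta\rho^{1+\alpha}$ in $B_{\rho r}$. This smaller slab is centered on a possibly tilted hyperplane, with tilt at most $C\delta$. Iterating this statement at every point of $B_{1/2}$ gives a unique tangent plane at each boundary point, with the normal varying $C^{\alpha}$ and tilt bounded by $C\delta$. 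That yields a $C^{1,\alpha}$ graph $x_d=w(x')$ with $|\nabla w|\le C\delta$.

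The first step is a Harnack inequality for flat minimizers. We work in an $\varepsilon$-flat cylinder in which $\partial L$ lies in $\{|x_d|\le\delta\}$, and suppose that in a ball of comparable size a fixed fraction of $\partial L$ lies in $\{x_d\le 0\}$. We then show that $\partial L$ lies in $\{x_d\le(1-c)\delta\}$ in a smaller cylinder. To prove this, we slide from above a family of explicit strict supersolution barriers for the mean curvature operator, namely small perturbations of hyperplanes by paraboloids or radial functions. We also use an Alexandrov-Bakelman-Pucci type measure estimate at the contact points, which exploits the fact that a perimeter minimizer cannot touch a strict supersolution from one side. Iterating this gives Hölder continuity of the normalized heights $x_d/\delta$ down to scale $\delta$.

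The second step is compactness and linearization. We take a sequence $\delta_k\to0$ of counterexamples to improvement of flatness and rescale vertically by $\delta_k$. By the Harnack step, the sets $\{(x',x_d/\delta_k):x\in\partial L_k\}$ converge in Hausdorff distance to the graph of a Hölder continuous function $\bar w$. Testing the minimal surface equation in the viscosity sense against quadratic polynomials shows that $\bar w$ is a viscosity solution of $\Delta\bar w=0$, because the graphical minimal-surface operator linearizes to the Laplacian. Interior $C^2$ estimates for harmonic functions then give $|\bar w-\ell|\le C\rho^2\le\frac12\rho^{1+\alpha}$ in $B_\rho$ for an affine function $\ell$ with $|\nabla \ell|\le C$. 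This contradicts the choice of the sequence for large $k$.

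Once $w\in C^{1,\alpha}$, the minimal-surface equation $-\nabla\cdot(\nabla w/\sqrt{1+|\nabla w|^2})=0$ holds classically in weak form. Differentiating in $x_i$, each $\partial_i w$ solves a uniformly elliptic divergence-form equation with $C^{\alpha}$ coefficients. Schauder estimates then give $w\in C^{2,\alpha}$, and bootstrapping gives $w\in C^\infty$, with $|\nabla w|\le C(d)\delta$ inherited from the tilt bounds. We expect the main obstacle to be the Harnack step, in particular constructing barriers whose mean curvature has the correct sign uniformly at the scale $\delta$. The alternative, classical route would instead use De Giorgi's excess-decay argument together with the monotonicity formula and the Lipschitz approximation.
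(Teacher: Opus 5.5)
Your proposal is correct and takes essentially the same route as the argument the paper relies on. The paper gives no proof of its own and instead cites Savin's Theorem 5.1, which uses this viscosity scheme: a Harnack inequality from sliding barriers and an ABP-type measure estimate, then improvement of flatness by compactness to a harmonic limit, iterated to a $C^{1,\alpha}$ graph with $|\nabla w|\le C\delta$; $C^\infty$ regularity then follows from the Schauder bootstrap you describe.
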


Next, we describe the boundary layers of elliptic equations in a half-space of \(\mathbb{R}^d\). Consider the problem
\begin{equation}
\label{equ:half-space-layer}
    -\nabla\cdot\bigl(A(x)\nabla v\bigr)=0
    \qquad \hbox{in } \{x_d>0\},
    \qquad
    v(x)=g(x)
    \qquad \hbox{on } \{x_d=0\}.
\end{equation}
We assume that:
\begin{enumerate}
    \item
    \(A\) is a symmetric, uniformly elliptic matrix field; that is,
    there exists \(\lambda\in(0,1)\) such that
    \[
        \lambda|\xi|^2
        \leq
        \xi\cdot A(x)\xi
        \leq
        \lambda^{-1}|\xi|^2
    \]
    for all \(x\in\mathbb{R}^d_+\) and
    \(\xi\in\mathbb{R}^d\).

    \item
    Both \(A(x)\) and \(g(x)\) are $\mathbb{Z}^{d-1} \times \{0\}$ periodic;
    that is, for any \(m'\in\mathbb{Z}^{d-1}\) and
    \((x',x_d)\in\mathbb{R}^d_+\),
    \[
        A(x'+m',x_d)=A(x',x_d),
        \qquad
        g(x'+m')=g(x').
    \]

    \item
    \(A\) and \(g\) are smooth and bounded.
\end{enumerate}

Under these hypotheses, we obtain the following information on solutions of \eqref{equ:half-space-layer}.

\begin{lemma}
\label{l.exponential-bdry-layer}
Let \(A\) and \(g\) satisfy the above hypotheses. There exists a unique bounded solution \(v\) of
\eqref{equ:half-space-layer}, and there is \(C\geq1\), depending only on \(\lambda\) and \(d\), such that \(
s:=\lim_{x_d\to+\infty}v(x',x_d)\) exists and 
\[ 
|v(x)-s|  \leq Ce^{-C^{-1}x_d}.
\]
\end{lemma}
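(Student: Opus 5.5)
The plan is to prove existence by approximation on finite strips, the exponential convergence by a periodic oscillation-decay iteration built from Harnack's inequality, and uniqueness by a flux/energy identity. Write $Q=[0,1)^{d-1}$ for the periodicity cell and $S_N=\{0<x_d<N\}$.

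\emph{Existence.} For each $N$ I would solve \eqref{equ:half-space-layer} on $S_N$ with $v_N=g$ on $\{x_d=0\}$ and $v_N=0$ on $\{x_d=N\}$. I would pose the problem in the class of $\mathbb{Z}^{d-1}$-periodic functions, which is a bounded domain after quotienting, so Lax--Milgram applies. By uniqueness on the quotient, $v_N$ is periodic. The classical maximum principle on the bounded quotient domain gives $\|v_N\|_\infty\le \|g\|_\infty$. Interior and boundary Schauder estimates (here I use that $A$ and $g$ are smooth) give locally uniform $C^{2,\alpha}$ bounds. A diagonal subsequence then converges locally to a bounded periodic solution $v$ of \eqref{equ:half-space-layer} with $\|v\|_\infty\le\|g\|_\infty$.

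\emph{Oscillation decay.} Set $M_k=\sup_{x_d\ge k}v$, $m_k=\inf_{x_d\ge k}v$ and $\omega_k=M_k-m_k$. The functions $M_k-v$ and $v-m_k$ are nonnegative solutions in $\{x_d>k\}$ and are periodic in $x'$. The hyperplane section $\{x_d=k+1\}\cap(\overline{Q}\times\mathbb{R})$ is compact, so it is covered by a fixed number of balls of radius $1/4$ whose doubles lie in $\{x_d>k+\tfrac12\}$. Chaining the Harnack inequality (constant depending only on $\lambda,d$) across these balls, and extending by periodicity, gives
\[
\sup_{x_d=k+1}(M_k-v)\le C\inf_{x_d=k+1}(M_k-v),\qquad \sup_{x_d=k+1}(v-m_k)\le C\inf_{x_d=k+1}(v-m_k).
\]
To pass from the slice $\{x_d=k+1\}$ to the whole region $\{x_d\ge k+1\}$ I need a maximum principle there: the sup and inf over $\{x_d\ge k+1\}$ should be attained in the limit on $\{x_d=k+1\}$. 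For the constructed $v$ this is inherited from the strip approximants. With that in hand, the two inequalities read $M_k-m_{k+1}\le C(M_k-M_{k+1})$ and $M_{k+1}-m_k\le C(m_{k+1}-m_k)$. Adding them gives $\omega_k+\omega_{k+1}\le C(\omega_k-\omega_{k+1})$, that is,
\[
\omega_{k+1}\le \tfrac{C-1}{C+1}\,\omega_k .
\]
Hence $M_k$ and $m_k$ converge to a common limit $s$, and $|v-s|\le \omega_{\lfloor x_d\rfloor}\le 2\|g\|_\infty e^{-c x_d}$. The factor $\|g\|_\infty$ can be absorbed into the constant $C$ of the statement.

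\emph{Uniqueness.} Let $w$ be the difference of two bounded solutions, so $w=0$ on $\{x_d=0\}$. I expect this to be the main obstacle, because $w$ is not a priori periodic. First I would reduce to periodic $w$: the translates $w(\cdot+m')-w$ are also bounded solutions with zero boundary data, and I would try to show these vanish via the same argument below applied in a Liouville-type form, or else restrict to the periodic class in which the lemma is used. For periodic $w$, the oscillation argument gives $w\to s$ exponentially. Caccioppoli's inequality then shows $\nabla w$ decays exponentially as well. The flux $\Phi=\int_Q e_d\cdot A\nabla w\,dx'$ is independent of $x_d$, by the divergence theorem and periodicity, so $\Phi=0$. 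Integrating by parts on $Q\times(0,N)$ gives
\[
\int_{Q\times(0,N)}A\nabla w\cdot\nabla w=\int_{Q\times\{N\}}w\,A\nabla w\cdot e_d=s\Phi+o(1)=o(1).
\]
Therefore $\nabla w\equiv0$, and since $w=0$ on the boundary, $w\equiv0$. The same maximum principle for bounded periodic solutions that this uniqueness provides also justifies the step assumed in the oscillation iteration.
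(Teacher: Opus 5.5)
Your Harnack-chain contraction $\omega_{k+1}\le\frac{C-1}{C+1}\omega_k$ is the standard mechanism; the paper gives no proof of this lemma and only points to an external reference. However, two facts your argument relies on are not established.

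First, the maximum principle $\sup_{\{x_d\ge k+1\}}v=\sup_{\{x_d=k+1\}}v$ (and its counterpart for the infimum) is \emph{not} inherited from your strip approximants. Since $v_N=0$ on $\{x_d=N\}$, the maximum principle on $\{k+1<x_d<N\}$ only gives $v_N\le\max\{\sup_{\{x_d=k+1\}}v_N,\,0\}$. In the limit you therefore get only $\sup_{\{x_d\ge k+1\}}v\le\max\{\sup_{\{x_d=k+1\}}v,\,0\}$, which says nothing when the slice supremum is negative: for $A=I$, $g\equiv-1$ you learn $v\le 0$, not $v\le -1$. Your fallback, obtaining the maximum principle from uniqueness, is circular as written. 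Your uniqueness proof begins with ``the oscillation argument gives $w\to s$ exponentially'', which is the same iteration and needs the same maximum principle.

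Second, uniqueness among all bounded solutions, which the statement asserts, is left open. The flux identity requires $w$ to be periodic. The translates $w(\cdot+m')-w$ you propose to treat are no more periodic than $w$, so the same argument does not apply to them, and restricting to the periodic class changes the statement.

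The missing ingredient is the half-space Liouville principle, which needs no periodicity. Let $u\ge 0$ be a bounded weak subsolution in $\{x_d>0\}$ with $u=0$ on $\{x_d=0\}$. The boundary De Giorgi--Nash--Moser estimate gives $\sup_{B_r(x_0)\cap\{x_d>0\}}u\le C(r/R)^{\alpha}\|u\|_\infty$ for every $x_0\in\{x_d=0\}$ and every $R>r$, and letting $R\to\infty$ forces $u\equiv 0$.
\begin{itemize}
\item Applied to $w^{\pm}$, this gives uniqueness outright.
\item Applied to $(v-\sup_{\{x_d=k\}}v)^+$ and $(\inf_{\{x_d=k\}}v-v)^+$ in $\{x_d>k\}$, it gives exactly the maximum principle your iteration needs, for any bounded solution.
\end{itemize}
With this in place, your existence construction and your contraction go through unchanged.

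You could instead stay within your framework. Impose a zero conormal condition at $x_d=N$, so that the mixed maximum principle does pass to the limit. Then run the energy identity without any decay: Caccioppoli and boundedness give $\nabla w\in L^2(Q\times(0,\infty))$, so the constant flux vanishes, and subtracting slice means and using Poincar\'e on $Q$ sends the boundary term to zero. This route still yields uniqueness only in the periodic class.

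Finally, your bound $2\|g\|_\infty e^{-cx_d}$ is the correct form. By linearity, no prefactor depending only on $\lambda$ and $d$ can work, so the constant in the statement must also depend on $\|g\|_\infty$.
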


A proof of this result can be found in
\cite[Lemma 5.2]{Feldman_2019}. We now return to Proposition~\ref{prop:regularity-of-planelikes}.

\begin{proof}[Proof of Proposition~\ref{prop:regularity-of-planelikes}]
We combine the two results above as follows. In the following proof, the constants \(C>1>c>0\) may change from line to line and may depend on \(d\), \(m\), and \(C_0\), but not on the other parameters.

Fix \(p\in\partial L\) at height \(z_p\). Since \(B_{z_p/2}(p)\) is contained in the open half-space, \(L\) is a perimeter minimizer in \(B_{z_p/2}(p)\).
The flatness hypothesis confines
\(\partial L\cap B_{z_p/2}(p)\) to a slab of width at most \(2C_0\) centered on the plane
\(\{x=mz\}.\) Rescaling \(B_{z_p/2}(p)\) to the unit ball, the flatness is at most \(2C_0/z_p\), which is below the threshold \(\delta_0\) of
Theorem~\ref{t.dg} once \(z_p\geq C_2T\), with \(C_2\) sufficiently large depending on \(C_0/T\).

Theorem~\ref{t.dg} then makes \(L\cap B_{z_p/4}(p)\) a smooth subgraph over the plane \(\{x=mz\}\), with slope at most \(CC_0/z_p\).
Choosing \(C_2\) larger if necessary, we can also conclude that \(L\cap B_{z_p/4}(p)\) is a smooth subgraph in the \((y,z)\) variables. Since \(z_p\geq z_\ast=C_2T\) was arbitrary, and the local graph parametrizations agree on overlaps, we conclude that \(L\cap\{z\geq z_\ast\}\) is globally a smooth subgraph in the \((y,z)\) coordinates.

In other words,
\(
    L\cap\{z\geq z_\ast\}
    =
    \{(x,y,z):x<X(y,z)\},
\)
where \(X\) is smooth and \(T\)-periodic in \(y\). The trapping assumption gives
\(
    |X(y,z)-mz|\leq C_0.
\)
Moreover, \(X\) solves the graphical minimal-surface equation, whose divergence form is
\[
    -\nabla\cdot
    \left(
        \frac{\nabla X}{\sqrt{1+|\nabla X|^2}}
    \right)
    =0,
\]
where the gradient is taken with respect to \((y,z)\).

Define
\(
    v(y,z):=X(y,z)-mz.
\)
Then \(v\) is bounded and \(T\)-periodic in \(y\).
Let
\[
    F(q):=\frac{q}{\sqrt{1+|q|^2}},
    \qquad
    q_0:=(0,m).
\]
Since
\(
    \nabla X=q_0+\nabla v
\)
and \(F(q_0)\) is constant, we obtain
\[
\begin{aligned}
    0
    &=
    -\nabla\cdot
    \bigl(F(q_0+\nabla v)-F(q_0)\bigr) \\
    &=
    -\nabla\cdot\bigl(A(y,z)\nabla v\bigr),
\end{aligned}
\]
where
\[
    A(y,z)
    :=
    \int_0^1
    DF\bigl(q_0+t\nabla v(y,z)\bigr)\,dt.
\]
The matrix field \(A\) is smooth, uniformly elliptic, and
\(T\)-periodic in \(y\).

Thus, \(v\) is a bounded solution of a linear equation of the form \eqref{equ:half-space-layer} in the half-plane \(\{z>z_\ast\}\), with boundary data
\(
    g(y)=v(y,z_\ast).
\)
Applying a translated and rescaled version of Lemma~\ref{l.exponential-bdry-layer}, we obtain a constant \(s\) such that
\[
    |v(y,z)-s|
    \leq
    Ce^{-c(z-z_\ast)/T}.
\]
Consequently,
\(
    |X(y,z)-(mz+s)|
    \leq
    Ce^{-c(z-z_\ast)/T}.
\)
Finally, the bound \(|s|\leq C_0\) follows from
\[
    |v(y,z)|=|X(y,z)-mz|\leq C_0.
\]
\end{proof}

\end{document}